\documentclass[10pt]{article}

\usepackage{amsmath}
\usepackage{amsfonts}
\usepackage{tikz-cd}
\usepackage[dvipsnames]{xcolor}
\definecolor{rosso}{RGB}{183,42,58}
\definecolor{SNSBlue}{RGB}{0,114,142}
\definecolor{uerrelle}{RGB}{200,150,120}
\usepackage[colorlinks=true,hyperindex,linkcolor=rosso, pagebackref=false, citecolor=SNSBlue, urlcolor=rosso, pdfpagelabels]{hyperref}
\usepackage{utfsym}
\usepackage{geometry}
\usepackage{calc}
\usepackage{amsthm}
\usepackage{mathrsfs}
\usepackage{amssymb}

\newcommand{\onto}[2][]{\xrightarrow[#1]{#2}\mathrel{\mkern-14mu}\rightarrow}
\newcommand\xto[2]{\xrightarrow[#1]{#2}}
\DeclareMathOperator{\im}{Im}
\DeclareMathOperator{\id}{id}
\DeclareMathOperator{\sub}{\subseteq}

\newcommand\Z{\mathbb{Z}}
\newcommand\Q{\mathbb{Q}}

\newcommand{\Fp}{\mathbb{F}_p}

\newcommand{\Zp}{\mathbb{Z}_p}
\newcommand{\Qp}{\mathbb{Q}_p}

\newcommand\calF{\mathcal{F}}

\newcommand\calG{\mathcal{G}}

\newcommand\pmeninf{p^{-\infty}}

\newcommand{\Fpbar}{\overline{\mathbb{F}}_p}

\DeclareMathOperator{\Rep}{Rep}
\newcommand{\Hom}{\mathrm{Hom}}
\DeclareMathOperator{\Aut}{\underline{\mathcal{A}\mathit{ut}}}
\newcommand{\End}{\underline{\mathcal{E}\mathit{nd}}}
\DeclareMathOperator{\sHom}{\underline{\mathcal{H}\mathit{om}}}
\DeclareMathOperator{\Spec}{Spec}
\newcommand{\Spf}{\mathrm{Spf}}
\newcommand{\Acr}{\mathbb{A}_\textnormal{cris}}
\newcommand{\Isoc}{\textnormal{Isoc}}
\DeclareMathOperator{\cris}{Cris}

\newcommand{\GL}{\textnormal{GL}}

\newcommand{\W}{W}
\newcommand{\Lie}{\textnormal{Lie}}
\newcommand{\C}{\mathbb{C}}

\newcommand{\alg}{\textnormal{alg}}

\newcommand{\defsus}{\textnormal{\textbf{Def}}_\textnormal{sus}}

\newcommand{\calA}{\mathcal{A}}
\DeclareMathOperator{\Isom}{\underline{\mathcal{I}\!\mathit{som}}}
\DeclareMathOperator{\Sym}{\textnormal{Sym}}

\newcommand\Hcris[1]{H^{#1}_{\textnormal{cris}}}

\newtheorem{lem}{Lemma}[section]
\newtheorem{prop}[lem]{Proposition}
\newtheorem{cor}[lem]{Corollary}
\newtheorem{thm}[lem]{Theorem}

\theoremstyle{definition}
\newtheorem{deffa}[lem]{Definition}
\newtheorem{ex}[lem]{Example}
\newtheorem{rem}[lem]{Remark}
\newtheorem{setting}[lem]{Setting}

\renewcommand{\qed}{\hfill\square\vspace{.5em}}
\newcommand{\unit}[1]{\vspace{.5em}\noindent{\bf #1}}

\title{Monodromy of Tate-Linear varieties}
\date{\today}
\author{Livia Grammatica}
\begin{document}
\maketitle

\begin{abstract}
    We answer a question of Chai about the rigidity of Tate-linear subvarieties of $\mathcal{A}_g$ over $\overline{\mathbb{F}}_p$: we show that that a smooth subvariety of a central leaf of $\mathcal{A}_g$ which is Tate-linear at one point is Tate-linear at every point. This is achieved by a careful analysis of the $p$-adic monodromy groups of Tate-linear varieties, building on recent work of D'Addezio and van Hoften.
\end{abstract}

\tableofcontents

\section{Introduction}

Let $p$ be a prime and let $k=\Fpbar$. In this paper we study a notion of linearity for smooth subvarieties of $\calA_{g,k}$, which can be thought as an algebraic counterpart to the notion of totally geodesic subvarieties of $\calA_{g,\C}$. The first algebraic definition of linearity was introduced by Moonen in his PhD thesis \cite{moonen}, drawing inspiration from work of Noot. Recall that if $x$ is an ordinary $k$-point of $\mathcal{A}_{g,\W(k)}$, the theory of Serre--Tate canonical coordinates endows the deformation space $\mathcal{A}_{g,\W(k)}^{/x}$ with the structure of a formal torus over $\W(k)$. Moonen then calls a subvariety $Z\subseteq\mathcal{A}_{g,\W(k)}$ formally quasi-linear at $x$ if 
each component of the completion $Z^{/x}$ is a formal subtorus of $\mathcal{A}_{g,\W(k)}^{/x}$. 

The two authors' interest in this notion lies in the close relationship between formally quasi-linear subvarieties of $\calA_g$ and Shimura subvarieties of $\calA_g$. Noot first showed that the formal completion at an ordinary point 
of (the naïve integral model of) a Shimura subvariety of a Siegel modular variety is formally quasi-linear \cite[Theorem 3.7]{noot}. Moonen then proved a converse of this statement: each subvariety of $\calA_g$ defined over a number field, which is formally quasi-linear at an ordinary point, is in fact a Shimura subvariety of $\calA_g$ \cite[Theorem 4.5]{moonen}. This result shows the remarkable rigidity of subvarieties which are formally linear at an ordinary point. 

In this paper we study a notion of linearity in positive characteristic, called Tate-linearity, which is due to Chai--Oort \cite{chaisustained}. It applies to the completion of $\calA_{g,k}$ at points belonging to arbitrary Newton strata and can be seen as an extention of the theory Serre-Tate canonical coordinates to non-ordinary points. The starting point is that for $x\in\calA_g(k)$, the deformation space which admits a linear structure is the formal completion of the central leaf $C(x)$ through $x$, and therefore the Tate-linear formal subvarieties are smooth formal subvarieties of $C(x)^{/x}$. More precisely, if $\mathbb{Y}$ is the quasi-polarized $p$-divisible group of the abelian variety corresponding to $x$, the Tate-linear formal subvarieties are subvarieties of the form $V(\frak{b})\subseteq C^{/x}$ where $\frak{b}\subseteq\frak{gl}_{\mathbb{D}(\mathbb{Y})[p^{-1}]}$ is a Dieudonné-Lie subalgebra (see \ref{sezionetl} for the precise definition).

These Tate-linear formal varieties $V(\frak{b})$ play a central role in Chai's strategy for proving the Hecke orbit conjecture. The link is established by Chai--Oort's orbital rigidity theorem \cite[Theorem 4.1]{chaioortTL}, which states that the closure of a prime-to-$p$ Hecke orbit is Tate-linear at every point. Combining this with new results on the monodromy of the universal isocrystal $\mathcal{M}$ restricted to $V(\frak{b})$, D'Addezio--van Hoften were able to prove the Hecke orbit conjecture for the special fibre of all Shimura varieties of Hodge type with good reduction \cite{marcopol}. 

Aside from the results in these two papers, however, little has been written about Tate-linear subvarieties in the general case. In this work we prove the following strengthening of \cite[Theorem 6.1.1]{marcopol} and we deduce a rigidity property for subvarieties of $\calA_{g,k}$ which are Tate-linear at a point. In \textit{ibid.}, the authors show that $G(\mathcal{M}_{V(\frak{b})})\subseteq U(\frak{b})$, where $G(\mathcal{M}_{V(\frak{b})})$ denotes the monodromy group of $\mathcal{M}$ restricted to $V(\frak{b})$, and $U(\frak{b})$ is the connected subgroup of $\GL(\mathbb{D}(\mathbb{Y})[p^{-1}])$ with Lie algebra $\frak{b}$. 

\begin{thm}[Theorem \ref{mainthm}]\label{teomonodromia}
    Let $S\subseteq C(x)^{/x}$ be a smooth formal subvariety and suppose that $G(\mathcal{M}_{S})\subseteq U(\frak{b})$. Then $S\subseteq V(\frak{b})$.
\end{thm}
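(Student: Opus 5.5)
The plan is to exploit the fact that the central leaf $C=C(x)^{/x}$ carries a canonical ``linear'' structure: over $C$ the universal quasi-polarized $p$-divisible group is isoclinic-sliced, so by Chai--Oort it is an iterated extension of constant isoclinic pieces, and the deformation is determined by the extension classes. In these terms the leaf is a successive extension of $p$-divisible formal groups built from $\sHom$'s between slices, and its tangent data is recorded by the unipotent part of $\GL(\mathbb{D}(\mathbb{Y})[p^{-1}])$ preserving the slope filtration. The subvariety $V(\frak{b})$ is the formal subvariety cut out by requiring the extension data to lie in $\frak{b}\cap\frak{n}$, where $\frak{n}$ is that nilpotent Lie algebra. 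So the strategy is to read off the position of $S$ from the monodromy of $\mathcal{M}_S$, using that monodromy of the universal isocrystal over $C$ is ``as large as the extension data permits''.

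The steps, in order. First, trivialize the graded pieces: over $C$ (and hence over $S$) the slope filtration of $\mathcal{M}$ splits on the associated graded, which is constant, isomorphic to $\mathrm{gr}\,\mathbb{D}(\mathbb{Y})[p^{-1}]$. Hence $G(\mathcal{M}_S)$ lies in the unipotent radical $N$ of the parabolic stabilizing the slope filtration, and the hypothesis gives $G(\mathcal{M}_S)\subseteq U(\frak{b})\cap N$. Second, interpret points of $C$ (valued in complete local rings, or in perfections thereof) as $N$-torsors over the Dieudonné theory, i.e.\ as Frobenius-twisted elements of $N$ modulo the constant part. Under this dictionary the isocrystal $\mathcal{M}_S$ is obtained from the trivial one by pushing out along the classifying map $S\to C$. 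Third, use the Tannakian description: $\mathcal{M}_S$ admits a reduction of structure group to $U(\frak{b})$ compatible with Frobenius, since monodromy is contained there and the fibre at $x$ gives a base point. Fourth, show that such a reduction forces the classifying map to land in $V(\frak{b})$. Concretely, $V(\frak{b})$ should be precisely the locus where the universal extension structure reduces to $U(\frak{b})\cap N$, which follows from the definition in \ref{sezionetl}. It is here that I would invoke the D'Addezio--van Hoften inclusion $G(\mathcal{M}_{V(\frak{b})})\subseteq U(\frak{b})$, together with its construction, to identify $V(\frak{b})$ with the universal object having $U(\frak{b})$-structure.

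The main obstacle is the fourth step, passing from a reduction of the \emph{isocrystal} (a rational, up-to-isogeny object on $S$) to an integral statement about the $p$-divisible group deformations, i.e.\ points of $C$. Monodromy lives in the category of isocrystals over the possibly non-perfect formal scheme $S$, while the linear structure of $C$ is most transparent after perfection. My approach is as follows. Pass to the perfection $S^{\mathrm{perf}}$, where $C^{\mathrm{perf}}$ is a (perfect) formal group and Dieudonné theory is an equivalence up to isogeny. There I would show that the $\frak{b}$-stable sub-isocrystals cut out $V(\frak{b})^{\mathrm{perf}}$, possibly by a filtration argument on the slope pieces using that Hom between distinct slopes has no Frobenius-invariants. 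Then I would descend, using that a closed formal subscheme of a smooth $S$ is determined by its perfection, and that $S$ is reduced and smooth so $S\subseteq V(\frak{b})$ follows from $S^{\mathrm{perf}}\subseteq V(\frak{b})^{\mathrm{perf}}$.

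A secondary subtlety is that the monodromy group is only defined up to conjugation and the reduction to $U(\frak{b})$ must be normalized at the base point $x$. I would handle this by using the fibre functor at $x$ itself, which is canonically $\mathbb{D}(\mathbb{Y})[p^{-1}]$, so that no conjugation ambiguity arises.
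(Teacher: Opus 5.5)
There is a genuine gap, and it sits in your fourth step, which carries the entire content of the theorem. The claim that $V(\mathfrak{b})$ ``is precisely the locus where the universal extension structure reduces to $U(\mathfrak{b})\cap N$'' does not follow from the definition in \ref{sezionetl}. There $V(\mathfrak{b})$ is defined group-theoretically as $\Aut^\circ_{\mathfrak{b}}(\widetilde{\mathbb{Y}})/\Aut^\circ_{\mathfrak{b}}(\mathbb{Y})$. The D'Addezio--van Hoften inclusion $G(\mathcal{M}_{V(\mathfrak{b})})\subseteq U(\mathfrak{b})$ only shows that $V(\mathfrak{b})$ lies \emph{inside} that locus. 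The converse is that a smooth $S$ whose isocrystal admits a (Frobenius-compatible) $U(\mathfrak{b})$-reduction must lie in $V(\mathfrak{b})$. That converse is exactly the theorem, so ``identifying $V(\mathfrak{b})$ with the universal object having $U(\mathfrak{b})$-structure'' is circular. Your steps 1--3 only repackage the hypothesis. The slope-filtration discussion is beside the point: containment of the monodromy in the unipotent radical is already known, and the question is which unipotent subgroup it lies in.

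The mechanism you propose for bridging the rational/integral divide, passing to perfections, cannot work. The perfection of $V(\mathfrak{b})^{\alg}$ is the torsor $\Aut^\circ_{\mathfrak{b}}(\widetilde{\mathbb{Y}})^{\alg}$, over which the universal $p$-divisible group is the constant $\mathbb{Y}$. Hence the pullback of $\mathcal{M}$ to $S^{\mathrm{perf}}$ or $C^{\mathrm{perf}}$ is constant, and no sub-isocrystal condition there can single out $V(\mathfrak{b})^{\mathrm{perf}}$. Moreover, crystals on a perfect scheme are just $W(R)$-modules, so horizontality becomes invisible, and that is the only place the monodromy hypothesis lives. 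Also, $\Aut^\circ(\widetilde{\mathbb{Y}})$ has no non-trivial points with values in perfect rings, so Dieudonné theory over perfect rings does not see the formal group at all.

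The missing idea is to keep a \emph{non-perfect} quasiregular semiperfect cover. The paper uses $\widetilde{Z}=\Aut^\circ_{\mathfrak{b}}(\widetilde{\mathbb{Y}})^{\alg}\times_{V(\mathfrak{b})^{\alg}}S^{\alg}=\Spec(B)$. This is a torsor under $\Aut^\circ_{\mathfrak{b}}(\mathbb{Y})$ with prismatization $\Spf(\Acr(B))$, on which $\mathcal{M}_S$ is the equivariant bundle attached to $\rho_{\mathfrak{b}}$. The argument then runs as follows:
\begin{enumerate}
\item If the monodromy is strictly smaller than $U(\mathfrak{b})$, the Tannakian formalism gives $x\in\Acr(B)$ with $\rho_B(x)=x\otimes 1+1\otimes g$ for a non-zero primitive $g\in\rho_{\mathfrak{b}}^*(R_{\mathfrak{b}}^+)$. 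This forces $g\in\Acr(B)$.
\item The $F$-stable span of $g$ equals $\log^*((\mathfrak{b}/\mathfrak{c})^\vee)$ for some Dieudonné-Lie subalgebra $\mathfrak{c}\subsetneq\mathfrak{b}$.
\item Scholze--Weinstein full faithfulness over the quasiregular semiperfect ring $B$ converts this into a morphism $\widetilde{Z}\to\widetilde{\mathbb{H}}_{\mathfrak{c}}$.
\item Since $\widetilde{\mathbb{H}}_{\mathfrak{c}}$ is perfect, this gives $I_{\mathfrak{c}}^{\pmeninf}\subseteq I_S^{\pmeninf}$, hence $I_{\mathfrak{c}}\subseteq I_S$ by Lemma \ref{tecnicofinale}.
\item Iterate this descent from the leaf's Dieudonné-Lie algebra until the monodromy equals $U(\mathfrak{c})$. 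Then $\mathfrak{c}\subseteq\mathfrak{b}$ and $S\subseteq V(\mathfrak{c})\subseteq V(\mathfrak{b})$.
\end{enumerate}
Only the final ideal-theoretic step resembles your descent from perfections.
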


Since $G(\mathcal{M}_S)$ is always of the form $U(\frak{c})$ for some Dieudonné-Lie algebra $\frak{c}$, the following result is an immediate consequence of Theorem \ref{teomonodromia}. In the text we will give an independent proof.

\begin{prop}[Proposition \ref{propiso}]\label{coriso}
    The monodromy action gives an isomorphism $G(\mathcal{M}_{V(\frak{b})})\simeq U(\frak{b})$.
\end{prop}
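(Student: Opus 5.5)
We prove the two inclusions $G(\mathcal{M}_{V(\frak{b})})\subseteq U(\frak{b})$ and $U(\frak{b})\subseteq G(\mathcal{M}_{V(\frak{b})})$ separately. The first is \cite[Theorem 6.1.1]{marcopol}, which we take as given. Since $U(\frak{b})$ is connected, and the monodromy group of a connected formal scheme is also connected, it suffices to compare Lie algebras. So the task is to show $\frak{b}\subseteq \Lie G(\mathcal{M}_{V(\frak{b})})$.

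\textbf{Explicit model over $V(\frak{b})$.} Write $V(\frak{b})$ via its Tate-linear structure. Over $C(x)^{/x}$ the isocrystal $\mathcal{M}$, pulled back along the Chai--Oort linear coordinates, becomes a twist of the constant isocrystal $\mathbb{D}(\mathbb{Y})[p^{-1}]$. The twist is controlled by a ``coordinate'' map from the formal group $C(x)^{/x}$, which is built from $p$-divisible groups of the form $\sHom(\mathbb{Y}_{\textnormal{slope}},\mathbb{Y}_{\textnormal{slope}'})$. This coordinate map lands in unipotent elements of $\GL(\mathbb{D}(\mathbb{Y})[p^{-1}])$. Restricted to $V(\frak{b})$, the Frobenius structure is $\varphi_t=\exp(N(t))\circ\varphi_0$. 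Here $N(t)$ ranges over the sub-Dieudonné module $\frak{b}$, now viewed as part of the Dieudonné module of the formal subgroup $V(\frak{b})$.

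\textbf{Producing the Lie algebra.} Choose a point of $V(\frak{b})$ with values in a perfect ring, for instance a suitable perfectoid or $\Acr$-valued point. At such a point, crystalline parallel transport identifies the fibre with the fibre at $x$. The comparison isomorphism then differs from the identity by $\exp(N)$, for a ``universal'' element $N$ in $\frak{b}\otimes(\text{period ring})$. Any algebraic subgroup $H$ through which the monodromy factors must make the associated $H$-torsor trivial. Hence $\exp(N)$ lies in $H$ after base change to the period ring. Because the universal point is generic, $N$ spans $\frak{b}$ over the period ring. This forces $\frak{b}\subseteq\Lie H$.

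\textbf{Main obstacle.} The hard part is the last step: turning ``the universal transport matrix lies in $H(\text{period ring})$'' into ``$\frak{b}\subseteq \Lie H$.'' This requires the coefficients of $N$ to be suitably independent, so that no proper $\Qp$-rational subspace of $\frak{b}$ contains them. I would try to deduce this independence from the fact that $V(\frak{b})$ is a $p$-divisible formal group with Dieudonné module exactly $\frak{b}$. A nonzero Dieudonné submodule $\frak{c}\subsetneq\frak{b}$ with $\frak{c}\supseteq\Lie H$ would give a smaller Tate-linear subvariety $V(\frak{c})$ through which the universal point factors, which is a contradiction. The required Dieudonné stability of $\Lie H$ comes from the Frobenius structure on monodromy.

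A cleaner alternative is to reduce to the slope filtration. Using the filtration by slopes, the monodromy on the successive quotients is trivial. It then suffices to treat extensions, and these are classified by $\Hom$ of $p$-divisible groups. For those, the monodromy of a universal extension over its deformation space is well known to be the full unipotent group.
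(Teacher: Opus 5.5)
The step that fails is the one you treat as formal, not the one you flag.

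\textbf{The missing rigidity statement.} You assert that $\exp(N)$ lies in $H$ after base change to the period ring because ``the associated $H$-torsor becomes trivial'' on the cover. That does not follow. Over the perfect cover $\widetilde Z=\Aut^\circ_{\mathfrak b}(\widetilde{\mathbb Y})^{\alg}=\Spec(B)$, the pulled-back isocrystal is constant with fibre $V=\mathbb D(\mathbb Y)$. But since $B$ is perfect, its horizontal sections are all of $V\otimes W(B)[p^{-1}]$, not just $V[p^{-1}]$. So the trivialization coming from constancy of the $p$-divisible group need not respect the horizontal tensors of $\mathcal M_{V(\mathfrak b)}$.

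Concretely, such a tensor $t$ pulls back to a $G^\Delta$-equivariant map $\widetilde Z^\Delta\to V^{\otimes}$. Only if this map is \emph{constant} does equivariance give $\rho_{\mathfrak b}(g)t_x=t_x$, i.e.\ $\exp(N)\in H$. There is a further problem: a point with values in a perfect, hence reduced, ring cannot be compared directly with the constant point $x$ via the crystal structure. The comparison lives over $\Acr$ of the fibre $\Spec(C_{\mathfrak b})$ above $x$, where it is exactly $\rho_{\mathfrak b}$.

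So the real content is a rigidity statement: full faithfulness of $\mathcal R_{\mathbb Q}$ on $\langle V\rangle^\otimes$ (Lemma~\ref{homsets}, Corollary~\ref{fullyfaithful}). The paper proves it in two steps:
\begin{itemize}
\item The Hopf algebra through which $\rho_{\mathfrak b}$ factors is built from Scholze--Weinstein's universal logarithms $\theta_M$, on which Frobenius is topologically nilpotent. The Hopf algebra coacting on $\Acr(B)=W(B)$ lies in $W(C_{\mathfrak b}^\flat)$, on which Frobenius is bijective. Hence the two meet only in $W(k)$ (Lemma~\ref{prep1}(2), Remark~\ref{trivialintrem}).
\item Unipotence plus Chevalley's theorem then turn full faithfulness into equality of groups.
\end{itemize}
Your proposal contains no substitute for this.

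\textbf{The flagged step is the easy one, and your fix is circular.} The independence of the coefficients of $N$ is the injectivity of $\rho^*_{\mathfrak b}\colon R^+_{\mathfrak b}\to\Acr(C_{\mathfrak b})$ (Lemma~\ref{lemmafinalehopf}). This comes from injectivity of $\theta_M$ and the fact that closed subgroups of vector groups in characteristic $0$ are linear. It already gives $G(\langle V\rangle^\otimes)=U(\mathfrak b)$ without any geometry of $V(\mathfrak b)$.

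Your fix says that $\Lie H\subseteq\mathfrak c\subsetneq\mathfrak b$ would force the universal point to factor through $V(\mathfrak c)$. That is precisely Theorem~\ref{mainthm} for $S=V(\mathfrak b)$. It is a stronger result than the one you are proving, and the paper derives it with the same prismatic machinery plus Lemmas~\ref{inclstretta} and~\ref{prep2}. Invoking it without proof is circular.

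The slope-filtration alternative has the same defect. The claim that the monodromy of a universal extension over $V(\mathfrak b)$ is the full unipotent group is the two-slope case of the proposition itself, not a known input. Moreover, $U(\mathfrak b)$ is in general non-abelian, so you would still have to control the commutator directions.
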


More interestingly, by combining Theorem \ref{teomonodromia} with a result on local monodromy of \cite{marcopol} we get an affermative answer to a question of Chai \cite[Question 7.4]{chaioortTL} (see also \cite[Question 8.7.1]{marcopol}).

\begin{thm}[Theorem \ref{domandaC}]\label{rigiditaTL}
    Let $C$ be a central leaf of $\calA_g$ and let $Z\subseteq C$ be a smooth, connected, locally closed subvariety. If there exists $x\in Z(k)$ such that $Z^{/x}$ is Tate-linear, then $Z^{/y}$ is Tate-linear for all $y\in Z(k)$.
\end{thm}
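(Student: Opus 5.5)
The strategy is to turn Tate-linearity into a statement about monodromy groups, which behaves well along $Z$ because $Z$ is connected. Theorem \ref{teomonodromia} then converts monodromy information back into Tate-linearity at each point.

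\textbf{Setup at $x$.} Write $Z^{/x}=V(\frak{b})$ for a Dieudonné--Lie subalgebra $\frak{b}\subseteq\frak{gl}_{\mathbb{D}(\mathbb{Y}_x)[p^{-1}]}$. By Proposition \ref{coriso}, the local monodromy group satisfies $G(\mathcal{M}_{Z^{/x}})\simeq U(\frak{b})$. The key input from \cite{marcopol} is a comparison between local and global monodromy on a smooth connected variety inside a central leaf: for every $y\in Z(k)$, the local monodromy $G(\mathcal{M}_{Z^{/y}})$, after base change along $\omega_y$, is identified with the global monodromy $G(\mathcal{M}_Z)$. Roughly, on a central leaf the isocrystal $\mathcal{M}$ has constant slopes and is in a suitable sense ``isotrivial up to isogeny'', so global monodromy is already realized on formal neighbourhoods. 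Invoking this at $x$ gives $G(\mathcal{M}_Z)\simeq U(\frak{b})$ as a subgroup of $\GL$ of the fibre at $x$.

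\textbf{Transport to $y$.} Fix $y\in Z(k)$. Since $x$ and $y$ lie in the same central leaf, there is an isomorphism of quasi-polarized $p$-divisible groups $\mathbb{Y}_x\simeq\mathbb{Y}_y$. Combined with the torsor of fibre-functor isomorphisms (in the convergent or overconvergent $F$-isocrystal category, base changed to a large enough field), this carries $G(\mathcal{M}_Z)$ at $x$ to $G(\mathcal{M}_Z)$ at $y$. We need a Dieudonné--Lie algebra $\frak{b}'\subseteq\frak{gl}_{\mathbb{D}(\mathbb{Y}_y)[p^{-1}]}$ with $G(\mathcal{M}_Z)_y=U(\frak{b}')$. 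The remark before Proposition \ref{coriso} says local monodromy groups are always of the form $U(\frak{c})$. So take $\frak{b}'$ to be the Lie algebra of $G(\mathcal{M}_{Z^{/y}})$, which is Dieudonné--Lie, and which by the local-equals-global statement coincides with $G(\mathcal{M}_Z)$ at $y$.

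\textbf{Conclusion.} Apply Theorem \ref{teomonodromia} to $S=Z^{/y}\subseteq C(y)^{/y}$, which is a smooth formal subvariety since $Z$ is smooth and lies in $C$. This gives $Z^{/y}\subseteq V(\frak{b}')$. For equality, compare dimensions. By Proposition \ref{coriso} and the conjugacy above,
\[
\dim V(\frak{b}')=\dim V(\frak{b})=\dim Z^{/x}=\dim Z^{/y},
\]
because $\dim V(\frak{c})$ depends only on the slope decomposition of $\frak{c}$ (its part of negative slope), and this is preserved under the identification induced by $\mathbb{Y}_x\simeq\mathbb{Y}_y$. Since $V(\frak{b}')$ is formally smooth and irreducible, the inclusion of a formally smooth subscheme of the same dimension is an equality. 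Hence $Z^{/y}=V(\frak{b}')$ is Tate-linear.

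\textbf{Main obstacle.} The delicate step is the local-equals-global monodromy comparison at an arbitrary point $y$, and in particular making it compatible with the Dieudonné--Lie structures so that the dimensions match. I would try to extract it from D'Addezio--van Hoften's result that, for $Z$ in a central leaf, $\mathcal{M}_Z$ is a direct sum of isoclinic pieces whose monodromy is determined by the unipotent part. Failing that, I would prove it using the slope filtration splitting on central leaves and a connectedness argument on $Z$.
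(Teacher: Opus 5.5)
\textbf{There is a genuine gap in your key input.} You assume that for every $y\in Z(k)$ the local monodromy $G(\mathcal{M}_{Z^{/y}})$ is identified with the global monodromy $G(\mathcal{M}_Z)$. This is false in general.

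Every local monodromy group here is unipotent, since it is of the form $U(\mathfrak{c})$. By contrast, $G(\mathcal{M}_Z)$ surjects onto the monodromy of the graded pieces of the slope filtration of $\mathcal{M}_Z$. That quotient can have a large non-unipotent part which becomes trivial on formal neighbourhoods. For example, take $Z=C$ to be the ordinary leaf, which is Tate-linear at every point. There the slope-$0$ piece comes from the \'etale quotient of the universal $p$-divisible group, whose $p$-adic monodromy is big. So your first step, $G(\mathcal{M}_Z)\simeq U(\mathfrak{b})$, fails, and the transport to $y$ built on it collapses.

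The source you propose for the input does not work either. On a central leaf $\mathcal{M}_Z$ is not a direct sum of isoclinic pieces. Its slope filtration does not split over $Z$ (only over the perfection), and this non-splitting is exactly what the local monodromy $U(\mathfrak{b})$ records.

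\textbf{What the paper uses instead.} The paper's input is \cite[3.4.4]{marcopol}: the local monodromy groups $G(\mathcal{M}_{Z^{/y}})$ themselves are independent of $y\in Z(k)$. This is a statement about the local groups, not a comparison with $G(\mathcal{M}_Z)$.

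The identification you need must also respect Frobenius. The dimension $\dim V(\mathfrak{c})$ depends on the slopes of $\mathfrak{c}$, not just on $\dim_K\mathfrak{c}$. A fibre-functor path $\omega_x\simeq\omega_y$ need not be $F$-equivariant and has nothing to do with an isomorphism $\mathbb{Y}_x\simeq\mathbb{Y}_y$. So abstract conjugacy of the groups at $x$ and $y$ would not give $\dim V(\mathfrak{b}')=\dim V(\mathfrak{b})$.

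Once this input is in place, the rest of your argument is the paper's. Proposition \ref{coriso} and Theorem \ref{teomonodromia} show that a smooth $S\subseteq C(x)^{/x}$ is Tate-linear if and only if $\dim S=\dim V(\Lie G(\mathcal{M}_S))$. The dimension $\dim Z^{/y}$ is constant because $Z$ is smooth and connected.
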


Theorem \ref{rigiditaTL} has been proved by Chai in the ordinary case in \cite[5.3]{chaiordinary}. Chai has also proposed a characteristic $p$ analogue of Moonen's result linking subvarieties which are Tate-linear at a point and Shimura subvarieties \cite[Question 7.7]{chaioortTL}, which was later corrected in \cite[8.7.2]{marcopol}. It states that a smooth subvariety $Z\subseteq C(x)$, which is Tate-linear at a point, is in fact an irreducible component of a central leaf of the special fibre of a Hodge type Shimura subvariety of $\calA_g$. Chai \cite[7]{chaiordinary} and recently Jiang \cite{jiang} have linked this conjecture for \textit{ordinary} points to a Mumford--Tate type conjecture, which is equivalent to finding characteristic $0$ lifts of abelian varieties with a prescribed set of Hodge tensors. We hope that our work will serve as a stepping stone toward generalizing these results beyond the ordinary case, and toward establishing a theory of Tate-linear varieties in mixed characteristic.

\unit{An idea of the proof.} The proof of Theorem \ref{teomonodromia} relies on a refinement of the techniques developed in \cite[Section 6]{marcopol}, where the prismatization construction of Bhatt--Lurie and Drinfeld \cite{bhattlurie'} plays a central role. In order to motivate the use of the prismatization we outline in the following paragraph an analogy with monodromy groups of local systems in complex geometry (ignoring the several technical difficulties which are tackled in the paper). It gives an informal explanation of the proof of Proposition \ref{coriso} and the strategy behind the proof of Theorem \ref{teomonodromia}.

If $(X,x)$ is a pointed, smooth and connected complex variety, a local system $\mathcal{L}$ on $X$ corresponds to a representation of the fundamental group $\pi_1(X,x)$ via the following construction. The pullback $\widetilde{\mathcal{L}}$ of $\mathcal{L}$ to the universal cover $\widetilde{X}\to X$ is a $\pi_1(X)$-equivariant local system on $\widetilde{X}$. For topological reasons, it is ``split'' in the following sense: there exists a unique representation $\rho:\pi_1(X,x)\to\GL(\mathcal{L}_x)$ such that $\widetilde{\mathcal{L}}\simeq \mathcal{L}_x\times\widetilde{X}$ with diagonal action. The monodromy group of $\mathcal{L}$ is then the smallest algebraic subgroup of $\GL(\mathcal{L}_x)$ through which $\rho$ factors.

On the Tate-linear side, in Section \ref{acris} we construct a map $\widetilde{\Pi}(\frak{b})\to V(\frak{b})$ which is a torsor under a certain group $G_\frak{b}$. The space $\widetilde{\Pi}(\frak{b})$ is a perfect affine formal scheme, and should be thought in this analogy as a simply connected (quasisyntomic) cover of $V(\frak{b})$. In order to push the analogy further, we take the prismatization of these spaces \cite{bhattlurie'}. The map
\begin{equation*}
    q:\widetilde{\Pi}(\frak{b})^\Delta\to V(\frak{b})^\Delta
\end{equation*}
is a $G_\frak{b}^\Delta$-torsor, and the isocrystal $\mathcal{M}$ corresponds to a quasi-coherent sheaf on $V(\frak{b})^\Delta$, whose fibre over the natural $\W(k)$-point of $V(\frak{b})^\Delta$ is the Dieudonné module $\mathbb{D}(\mathbb{Y})[p^{-1}]$ of $\mathbb{Y}$. The group $G_\frak{b}$ has a natural representation $\rho_\frak{b}:G_\frak{b}\to\GL(\mathbb{D}(\mathbb{Y})[p^{-1}])$, and we show that
$q^*\mathcal{M}\simeq \mathbb{D}(\mathbb{Y}[p^{-1}])\times\widetilde{\Pi}(\frak{b})$ in a precise sense. It will then follow formally that the monodromy group of $\mathcal{M}_{V(\frak{b})}$ is the smallest algebraic subgroup of $\mathbb{D}(\mathbb{Y})$ through which $\rho_\frak{b}$ factors, and an efficient description of $\rho_\frak{b}$ relying on Scholze--Weinstein's results on Dieudonné theory for semiperfect rings shows that this group is equal to $U(\frak{b})$.

\unit{Organization of the paper.} The paper is structured as follows: Section \ref{acris} is a short summary of the theory of sustained $p$-divisible groups and Tate-linearity due to Chai--Oort (\cite{chaisustained},\cite{chaioortTL}), where we also use D'Addezio--van Hoften's formalism of Dieudonné-Lie algebras \cite{marcopol} in order to define Tate-linear subvarieties of $C(x)^{/x}$. In section \ref{dieudonnesw} we recall some results about Dieudonné theory for quasiregular semiperfect algebras from \cite{scholzeweinstein}, and we make some preliminary constructions. Namely, we describe the representation $\rho_\frak{b}$ of the previous pragraph. Finally, in section \ref{mainsec} we explain how to use the prismatization formalism to compute monodromy groups of isocrystals in our specific setting, and then we prove the main results.

\vspace{.5em}\noindent{\bf Notation and conventions.} (1) Throughout the paper $k$ is a field of characteristic $p>0$, and when $k$ is perfect we write $\W(k)$ for the Witt vectors of $k$ and $K=\W[1/p]$. We write $\calA_g$ for the $k$-scheme (or $k$-stack) parameterizing principally polarised abelian varieties with some fixed (implicit) level structure.

(2) We use uppercase roman letters $M,N,\dots$ for contravariant Dieudonné modules, lowercase gothic letters $\frak{a}^+,\frak{b}^+,\dots$ for covariant Dieudonné modules, and $\frak{a},\frak{b},\dots$ for covariant Dieudonné modules tensored with $\Qp$. We write $\mathbb{X},\mathbb{Y},\dots$ for $p$-divisible groups.

(3) If $\mathbb{X}$ is a $p$-divisible group over an $\Fp$-scheme $S$, we denote by $\mathbb{D}(\mathbb{X})$ its covariant Dieudonné crystal and by $\mathbb{D}^\vee(\mathbb{X})$ its contravariant Dieudonné crystal. Covariant Dieudonné crystals and modules have slopes contained in $[-1,0]$, and contravariant Dieudonné crystals and modules have slopes contained in $[0,1]$.

(4) If $R$ is an $\Fp$-algebra, we denote by $F$ its Frobenius, by $R^\flat$ the perfection $\varprojlim_FR$ and by $R_\text{perf}$ the coperfection $\varinjlim_FR$. We use the following non-standard terminology: an $f$-quasiregular semiperfect algebra is the quotient of a perfect ring by a regular sequence.

(5) If $X$ is a scheme and $x\in X$ is a point we write $X^{/x}$ for the formal completion of $X$ at $x$. 

(6) If $\calF$ and $\calG$ are sheaves on a site, we write $\sHom(\mathcal{F},\mathcal{G})$ for the hom-sheaf and $\Aut(\calF)$ for the sheaf of automorphisms of $\calF$.

\unit{Acknowledgements.} I sincerely thank Marco D'Addezio and Pol van Hoften for introducing me to their paper on the Hecke orbit conjecture, and for inviting me to think about these questions. This work has greatly benefited from discussions with them and from their continuous interest. I also thank Anna Cadoret and Thomas Agugliaro for interesting and useful discussions on the topics of this paper. The intellectual debt that this paper owes to the work Ching-Li Chai and Frans Oort is evident. 

The author received funding from the EU Horizon Europe research and innovation programme under the Marie Skłodowska-Curie grant agreement 101126554.

\section{Sustained $p$-divisible groups and Tate-linearity}\label{acris}

In this chapter we set the stage for studying monodromy groups of Tate-linear formal varieties. For a $k$-point $x\in\calA_g(k)$, the formal completion of the central leaf $C(x)^{/x}$ admits a \textit{linearization} which generalizes the Serre--Tate canonical coordinates of the ordinary case. This description was revealed by Chai--Oort's theory of sustained deformations of $p$-divisible groups, and it is the object of the following sections. After a brief discussion of the conventions on formal schemes and $p$-adic monodromy groups that we use throughout, we review the theory of sustained $p$-divisible groups and Tate-linearity following Chai--Oort and D'Addezio--van Hoften.

\subsection{Conventions on formal schemes}

We adopt a functor-of-points approach to formal schemes following \cite[Tag 0AHW]{stacksproject}, and the reader who looks for more detail is encouraged to read the discussion there. We will keep our discussion to the bare minimum as we will only use the basic properties of (countably indexed) affine formal schemes (over a field $k$). 

Recall that a thickening is a closed immersion of schemes which is also a homeomorphism - equivalently, a closed immersion whose sheaf of ideals has locally nilpotent sections.

\begin{deffa}
    An fppf sheaf $Z$ on $\Spec(k)$ is a (countably indexed) affine formal scheme if it can be written as a (countably indexed) colimit
    \begin{equation*}
        Z=\varinjlim_{i\in I}Z_i
    \end{equation*}
    where the $Z_i$ are affine schemes and the transition maps are thickenings.
\end{deffa}

In \cite[Tag 0AN1]{stacksproject} it is shown that countably indexed affine formal schemes are precisely the formal schemes of the form $\Spf(A)$, with $A$ a weakly admissible topological $k$-algebra with a countable fundamental system of neighbourhoods of $0$ (see \textit{ibid.}). From now on we assume all affine formal schemes to be countably indexed. If $B$ is a $k$-algebra and $Z=\Spf(A)$ we write 
\begin{equation*}
    Z(B):=Z(\Spec(B))=\varinjlim Z_i(\Spec(B)),
\end{equation*}
and we have $Z(B)=\Hom_{k,\text{cont}}(A,B)$ where $B$ is endowed with the discrete topology. We will write $Z^\alg=\Spec(A)$, which is equipped with a natural morphism of sheaves $Z\to Z^\alg$. 

Finally, we need will use the fact that affine formal schemes are sheaves for the fpqc topology over $\Spec(k)$ (\cite[Tag 0AQD]{stacksproject}).

\begin{ex}\label{deftpuc}
    Let $\mathbb{X}$ be a connected $p$-divisible group over $\Spec(k)$. As explained in \cite[3]{scholzeweinstein}, we can associate to it three affine formal group schemes.
    \begin{enumerate}
        \item The sheaf $\mathbb{X}$ itself is a smooth formal scheme over $k$ of the form $\Spf(k[\![x_1,\dots,x_n]\!])$.
        \item Its Tate module $T_p\mathbb{X}=\varprojlim_p\mathbb{X}[p^n]$ is representable by an $f$-semiregular quasiperfect affine group scheme:
        \begin{equation*}
            T_p\mathbb{X}\simeq\Spec\big(k[x_1^{\pmeninf},\dots,x_n^{\pmeninf}]/(x_1,\dots,x_n)\big).
        \end{equation*} 
        \item Its universal cover $\widetilde{\mathbb{X}}=T_p\mathbb{X}[1/p]=\varinjlim_pT_p\mathbb{X}$ is of the form $\Spf(k[\![x_1^{\pmeninf},\dots,x_n^{\pmeninf}]\!])$. To see this last assertion, note that since $\mathbb{X}$ is connected we have $\varinjlim_pT_p\mathbb{X}=\varinjlim_FT_p\mathbb{X}$, see \cite[Proof of Proposition 3.1.3]{scholzeweinstein}.
    \end{enumerate} 
    By definition, there is a short exact sequence 
    \begin{equation*}
        0\to T_p\mathbb{X}\to\widetilde{\mathbb{X}}\to\mathbb{X}\to0
    \end{equation*}
    of fpqc sheaves on $\textnormal{Sch}_k$.
\end{ex} 

\unit{Torsors on affine formal schemes.} Let $G$ be an affine group scheme over $k$ and let $Z=\varinjlim Z_i$ be an affine formal scheme. A $G$-torsor over $Z$ is the datum of fpqc $G$-torsors $T_i\to Z_i$ for all $i$, and maps $T_i\to T_j$ over $Z_i\to Z_j$ such that the square
\[\begin{tikzcd}
	{T_i} & {T_j} \\
	{Z_i} & {Z_j}
	\arrow[from=1-1, to=1-2]
	\arrow[from=1-1, to=2-1]
	\arrow[from=1-2, to=2-2]
	\arrow[from=2-1, to=2-2]
\end{tikzcd}\]
is cartesian. It is then clear that $T=\varinjlim T_i$ is an affine formal scheme over $k$ and that $T\to Z$ is a $G$-torsor of fpqc sheaves. Conversely, any $G$-torsor $T\to Z$ of fpqc sheaves gives rise to a $G$-torsor in the former sense, so this definition does not depend on the presentation $Z=\varinjlim Z_i$.

\subsection{Monodromy groups of isocrystals}\label{monodromyiso}

\unit{The category of isocrystals.} Let $X$ be an $\Fp$-scheme. We denote by $\cris(X)$ the category of coherent crystals over $X$, which is a $\Hcris{0}(X)$-linear category. The category of isocrystals of $X$ is the isogeny category of $\cris(X)$, and is denoted $\Isoc(X)$. 

\unit{Tannakian structure.} Let $X$ be a Noetherian Frobenius-smooth $\Fp$-scheme \cite[Lemma 2.1.1]{drinfeldstacky}. For example $X$ could be smooth over a field with a finite $p$-basis, or $X$ could be of the form $\Spec(k[\![x_1,\dots,x_n]\!])$ with $k$ a perfect field. If $X$ is irreducible, the perfect ring
\begin{equation*}
    \kappa=\bigcap_n\Gamma(X,\mathcal{O}_X)^{p^n}
\end{equation*}
is a field, and $\Hcris{0}(X)=W(\kappa)$ \cite[Proposition 2.8.3]{drinfeldstacky}. If we write $K=W(\kappa)[1/p]$, by a result of Drinfeld \cite[Corollary 3.3.3]{drinfeldstacky} the category $\Isoc(X)$ is a $K$-linear Tannakian category. The choice of a point $\eta:\Spec(\kappa)\to X$ defines a fibre functor $\Isoc(X)\to\textnormal{Vec}_{K}$, which allows us to define the monodromy group $G(\mathcal{M},\eta)$ of an isocrystal $\mathcal{M}$ as the Tannaka group of the Tannakian subcategory $\langle\mathcal{M}\rangle^\otimes\subseteq\Isoc(X)$. Thus the group $G(\mathcal{M},\eta)$ is a linear algebraic subgroup of $\GL(\mathcal{M}_\eta)$. 

When $X\simeq\Spec(k[\![x_1,\dots,x_n]\!])$ for a perfect field $k$ we have $\kappa=k$, and since there is only one choice for $\eta$ we abbreviate the monodromy group to $G(\mathcal{M})$. If $\mathcal{M}$ is an isoclinic $F$-isocrystal on $X$ it admits the slope filtration \cite[3.2.6]{marcopol}, and then \cite[A.1.2]{marcopol} implies that $G(\mathcal{M})$ is a unipotent algebraic group.

\unit{Monodromy for formal schemes.} There are two issues which prevent us from working directly with formal schemes in this paper: as of yet, there is no Tannakian formalism for isocrystals on a formal scheme, and no theory of prismatization. In order to define the monodromy group of (the isocrystal associated to) a $p$-divisible group over a formal scheme, we resort to an algebrization result due to Messing.

Let $A=k[\![x_1,\dots,x_n]\!]$ where $k$ is a perfect field, let $Z=\Spf(A)$ and $Z^\alg=\Spec(A)$. If we define a $p$-divisible group on $Z=\varinjlim Z_i$ as a compatible system of $p$-divisible groups on the $Z_i$, by \cite[II.4.16]{messingdeformation} the natural functor
\begin{equation*}
    \Big\{p\text{-divisible groups over }Z^\alg\Big\}\to \Big\{p\text{-divisible groups over }Z\Big\}
\end{equation*}
is an equivalence of categories. Thus, if $\mathbb{X}$ is a $p$-divisible group over $Z$ and $\mathcal{M}$ is the corresponding $F$-isocrystal, we define
\begin{equation*}
    G(\mathcal{M}):=G(\mathcal{M}^\alg),
\end{equation*}
where $\mathcal{M}^\alg$ denotes the $F$-isocrystal associated to the algebraization $\mathbb{X}^\alg$ of the $p$-divisible group $\mathbb{X}$. 

\subsection{Sustained $p$-divisible groups}

\unit{Definition and first properties.} Sustained deformations of $p$-divisible groups were introduced by Chai and Oort to give a scheme-theoretic description of the central leaves of $\calA_g$. We recall the definition and some results from the original reference \cite{chaisustained}, but we assume from the start that all schemes live over a fixed field $k$. Let $\mathbb{Y}$ be a 
$p$-divisible group over $k$.

\begin{deffa}[{\cite[2.1]{chaisustained}}]
    Let $S$ be a $k$-scheme. A 
    $p$-divisible group $\mathbb{X}$ over $S$ is sustained modeled on $\mathbb{Y}$ if, for all $n\ge0$, the scheme
    \begin{equation*}
        T_{\mathbb{X},n}:=\Isom_S(\mathbb{X}[p^n],\mathbb{Y}[p^n]_S)\to S
    \end{equation*}
    is an $\Aut(\mathbb{Y}[p^n])$-torsor over $S$. 
\end{deffa}

In other words, a sustained $p$-divisible group modeled on $\mathbb{Y}$ is a $p$-divisible group $\mathbb{X}$ such that $\mathbb{X}[p^n]$ is isomorphic to $\mathbb{Y}[p^n]$ over some fppf cover of $S$. It will be useful for our purposes to take this description to the infinite level: if $\mathbb{X}$ is sustained modeled on $\mathbb{Y}$, the maps of torsors $T_{\mathbb{X},n+1}\to T_{\mathbb{X},n}$ are compatible with the morphisms of structure groups $\Aut(\mathbb{Y}[p^{n+1}])\to\Aut(\mathbb{Y}[p^n])$. Thus, taking the limit $T_\mathbb{X}=\varprojlim_n T_{\mathbb{X},n}$ we get the affine $S$-scheme
\begin{equation*}
    T_\mathbb{X}=\Isom_S(\mathbb{X},\mathbb{Y}_S)\to S
\end{equation*}
which is an $\Aut(\mathbb{Y})=\varprojlim_n\Aut(\mathbb{Y}[p^n])$-torsor in the fpqc topology. For a $k$-algebra $R$, the set $T_\mathbb{X}(R)$ is the set of isomorphisms of $p$-divisible groups $\mathbb{X}_R\simeq \mathbb{Y}_R$.

Conversely, if $T\to S$ is an $\Aut(\mathbb{Y})$-torsor for the fpqc topology, the assignment 
\begin{equation}\label{xn}
    X_n=T\times^{\Aut(\mathbb{Y})}\mathbb{Y}[p^n]
\end{equation} 
gives rise to a family of finite flat group schemes over $S$. By \cite[3.6]{chaisustained} the sheaf $\mathbb{X}=\varinjlim_n X_n$ is a $p$-divisible group over $S$ which is sustained modeled on $\mathbb{Y}$ and satisfies $\mathbb{X}[p^n]=X_n$. Furthermore, the assignments $\mathbb{X}\mapsto T_\mathbb{X}$ and $T\mapsto \mathbb{X}$ are inverses of one another, so it is equivalent to give a $p$-divisible group sustained on $\mathbb{Y}$, or the corresponding $\Aut(\mathbb{Y})$-torsor.

These considerations extend to formal schemes: if $Z=\varinjlim Z_i$ is an affine formal scheme and $\mathbb{X}$ is a $p$-divisible group on $Z$ which is sustained modeled on $\mathbb{Y}$, i.e. the pullback of $\mathbb{X}$ to $Z_i$ is sustained modeled on $\mathbb{Y}$ for all $i$, then the above discussion produces $\Aut(\mathbb{Y})$-torsors $T_i\to Z_i$ which assemble to a torsor $T\to Z$. The description of $T$ as a sheaf on $k$-algebras is the same as for ordinary schemes.

\unit{The groups $\Aut(\mathbb{Y})$ and $\Aut(\widetilde{\mathbb{Y}})$.} In light of the equivalence between sustained $p$-divisible groups and torsors, we now spend some words on the structure of the group scheme $\Aut(\mathbb{Y})$. Our starting point is Chai--Oort's construction of the internal Hom $p$-divisible group $\mathcal{H}_\mathbb{Y}$, see \cite{chaisustained} and \cite[4.1]{marcopol}. 

Roughly speaking, for $n\ge0$ the group $\mathcal{H}_\mathbb{Y}[p^n]$ is defined as the image of the map of fpqc sheaves $\End(\mathbb{Y})\to\End(\mathbb{Y}[p^n])$: it parametrizes endomorphisms of $\mathbb{Y}[p^n]$ which extend to full endomorphism of $\mathbb{Y}$ over an fpqc cover of the base. The notation is justified by the fact that the direct limit $\mathcal{H}_\mathbb{Y}:=\varinjlim_n\mathcal{H}_\mathbb{Y}[p^n]$ is a $p$-divisible group \cite[3.2]{chaisustained}, and by construction $T_p\mathcal{H}_\mathbb{Y}=\End(\mathbb{Y})$. 

Note that each $\mathcal{H}_\mathbb{Y}[p^n]$ is naturally a ring scheme, and by definition we have 
\begin{equation*}
    \Aut(\mathbb{Y})=(T_p\mathcal{H}_\mathbb{Y})^\times=\varprojlim_n(\mathcal{H}_\mathbb{Y}[p^n])^\times,
\end{equation*}
so $\Aut(\mathbb{Y})$ is a profinite group scheme over $k$. In the rest of the paper we will focus on the connected components of the group schemes $T_p\mathcal{H}_{\mathbb{Y}}$ and $\Aut(\mathbb{Y})$. For a $k$-scheme $Z$, the connected group scheme $T_p\mathcal{H}_\mathbb{Y}^\circ=\End^\circ(\mathbb{Y})$ parameterizes endomorphisms of $\mathbb{Y}_Z$ which are $0$ on $Z_\textnormal{red}$, and $\Aut^\circ(\mathbb{Y})$ parameterizes those automorphisms which are the identity on $Z_\textnormal{red}$. 

For our purposes, it is appropriate to think of $T_p\mathcal{H}_\mathbb{Y}^\circ$ as a sort of Lie algebra of the group $\Aut^\circ(\mathbb{Y})$, just like $\textnormal{End}(V)$ is the Lie algebra of $\GL(V)$ for a vector space $V$. Indeed, just like for $\textnormal{End}(V)$, the ring structure on $T_p\mathcal{H}_{\mathbb{Y}}^\circ$ endows it with a natural Lie bracket. This idea is due to Chai--Oort and developed extensively in \cite{marcopol}. To make this precise, we introduce the functor $\Aut(\widetilde{\mathbb{Y}})$ parameterizing quasi-isogenies of $\mathbb{Y}$, which is the group of units of the universal cover $\widetilde{\mathcal{H}}_{\mathbb{Y}}$ of $\mathcal{H}_{\mathbb{Y}}$, and its connected component $\Aut^\circ(\widetilde{\mathbb{Y}})$. By the discussion of Example \ref{deftpuc} above we have $\Aut^\circ(\widetilde{\mathbb{Y}})=\varinjlim_F\Aut^\circ(\mathbb{Y})$, so $\Aut^\circ(\widetilde{\mathbb{Y}})$ is an affine formal group scheme over $k$.

The point is that there is a well-defined exponential map
\begin{equation}\label{exp}
    \exp:\widetilde{\mathcal{H}}_\mathbb{Y}^\circ\to\Aut^\circ(\mathbb{\widetilde{Y}}),\quad f\mapsto \sum_{n\ge0}f^n/n!
\end{equation}
which is an isomorphism of affine formal schemes \cite[4.1.9]{marcopol}, whose inverse is a similarly-defined logarithm map $\log:\Aut^\circ(\mathbb{\widetilde{Y}})\to\widetilde{\mathcal{H}}_\mathbb{Y}^\circ$. The group law on $\Aut^\circ(\mathbb{\widetilde{Y}})$ can be described uniquely in terms of $\widetilde{\mathcal{H}}_\mathbb{Y}^\circ$ and its Lie bracket. Indeed, for purely formal reasons we have the equality
\begin{equation*}
    \exp(f)\circ\exp(g)=\exp(\textnormal{BCH}(f,g)),
\end{equation*}
where $\textnormal{BCH}(x,y)$ is the Baker-Campbell-Hausdorff power series - see \cite[4.3]{marcopol} for details.

One technical issue with the isomorphism \eqref{exp} is that it does not necessarily restrict to a map from $T_p\mathcal{H}_\mathbb{Y}^\circ$ to $\Aut^\circ(\mathbb{Y})$. However, since $p^{n}/n!\in\Zp$ for all $n\ge0$, we have in fact $\exp(f)\in\Aut^\circ(\mathbb{Y})$ for $f\in pT_p\mathcal{H}_\mathbb{Y}^\circ$. Writing the short exact sequence
\begin{equation}\label{exppiquadro}
    0\to pT_p\mathcal{H}_\mathbb{Y}^\circ\xto{}{\exp}\Aut^\circ(\mathbb{Y})\to H\to 0,
\end{equation}
it is possible to show that $H$ is an inifinitesimal group scheme over $k$, leveraging the fact that the logarithm also converges (see \cite[5.5.3]{marcopol}).

An important observation is the fact that $p$-divisible groups  endowed with Lie brackets correspond, via Dieudonné theory, to Dieudonné modules endowed with compatible Lie brackets. This is framed as compatibility of Dieudonné theory with bilinear maps in \cite[4.2.12]{marcopol} and is essential in the next subsection. The upshot is that the pair $(\widetilde{\mathcal{H}}^\circ_\mathbb{Y},[-,-])$, and thus the group scheme $\Aut^\circ(\widetilde{\mathbb{Y}})$, can be recovered from just the Dieudonné module of $\mathcal{H}_\mathbb{Y}^\circ$ and its corresponding Lie algebra. The following description of this Dieudonné module is due to Chai--Oort.

\begin{prop}\label{dieudonneinternalhom}
    Suppose that $k$ is a perfect field, and let $\frak{a}_\mathbb{Y}^+$ be the covariant Dieudonné module of $\mathcal{H}_\mathbb{Y}^\circ$.  By Dieudonné theory, $\frak{a}_\mathbb{Y}^+$ is naturally a subset of $\textnormal{End}_{\W(k)}(\mathbb{D}(\mathbb{Y}))$, which is a $\W(k)[F]$-module with slopes in $[-1,1]$. It is identified with the largest $\W(k)$-submodule with slopes in $[-1,0[$.
\end{prop}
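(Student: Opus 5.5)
We need a plan for proving Proposition \ref{dieudonneinternalhom}: the covariant Dieudonné module of $\mathcal{H}_\mathbb{Y}^\circ$ is the largest $\W(k)$-submodule of $\textnormal{End}(\mathbb{D}(\mathbb{Y}))$ with slopes in $[-1,0[$.

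The plan is to reduce to the case where $k$ is algebraically closed and $\mathbb{Y}$ is a direct sum of isoclinic pieces. The reduction uses two facts: the formation of $\mathcal{H}_\mathbb{Y}$ commutes with base change, and Dieudonné modules are compatible with $W(k)\to W(\bar k)$ together with Galois descent. The proposition is also insensitive to isogeny in the following sense. If $\mathbb{Y}\to\mathbb{Y}'$ is an isogeny, the two sides change compatibly inside $\textnormal{End}(\mathbb{D}(\mathbb{Y}))[1/p]$. Concretely, one first identifies $\mathcal{H}_\mathbb{Y}$ up to isogeny with $\sHom$ between the slope pieces of $\mathbb{Y}$, and then pins down the lattice separately.

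\textbf{Step 1: rational statement.} Write $\mathbb{Y}\sim\bigoplus_i\mathbb{Y}_i$ with $\mathbb{Y}_i$ isoclinic of slope $\lambda_i$, using Dieudonné–Manin up to isogeny over $\bar k$. By Chai–Oort \cite{chaisustained}, $\mathcal{H}_\mathbb{Y}$ is isogenous to $\bigoplus_{i,j}\mathcal{H}om(\mathbb{Y}_i,\mathbb{Y}_j)$, the internal Hom $p$-divisible groups. The covariant Dieudonné module of each of these is $\Hom(\mathbb{D}(\mathbb{Y}_i),\mathbb{D}(\mathbb{Y}_j))$ up to isogeny, which is isoclinic of slope $\lambda_j-\lambda_i$. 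The connected part of a $p$-divisible group corresponds to the part of its covariant module with slopes in $[-1,0[$, the étale part being slope $0$. Hence the connected part of $\mathcal{H}_\mathbb{Y}$ picks out exactly the summands with $\lambda_j-\lambda_i<0$. Rationally, $\frak{a}_\mathbb{Y}=\frak{a}^+_\mathbb{Y}[1/p]$ is therefore the slope-$[-1,0[$ part of $\textnormal{End}(\mathbb{D}(\mathbb{Y}))[1/p]$.

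\textbf{Step 2: integral statement.} The connected–étale sequence $0\to\mathcal{H}^\circ\to\mathcal{H}\to\mathcal{H}^{et}\to0$ gives $\frak{a}^+_\mathbb{Y}=\frak{a}_\mathbb{Y}\cap\mathbb{D}(\mathcal{H}_\mathbb{Y})$, since it is saturated. It then remains to show that $\mathbb{D}(\mathcal{H}_\mathbb{Y})$ is all of $\textnormal{End}_{\W(k)}(\mathbb{D}(\mathbb{Y}))$ in its slope $\le0$ part, compatibly with $F$.

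For this, use $T_p\mathcal{H}_\mathbb{Y}=\End(\mathbb{Y})$. Evaluate on the perfect quasiregular semiperfect rings $R=\bar k[x^{p^{-\infty}}]/(x)$ and their variants, where by Scholze–Weinstein (recalled later in the paper) $\End(\mathbb{Y}_R)=\textnormal{End}_{\Acr(R),F,V}(\mathbb{D}(\mathbb{Y})\otimes\Acr(R))$. From this we get $\mathbb{D}(\mathcal{H}_\mathbb{Y})=\textnormal{End}_{\W}(\mathbb{D}(\mathbb{Y}))\cap(\text{slopes }\le0)$. Equivalently, $\mathbb{D}(\mathcal{H}_\mathbb{Y})$ is the largest sub-$F$-crystal of $\textnormal{End}(\mathbb{D}(\mathbb{Y}))$ on which $F$, and $V$ in the appropriate normalization, are integral. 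Intersecting with $\frak{a}_\mathbb{Y}$ then gives the largest $\W(k)$-submodule with slopes in $[-1,0[$.

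\textbf{Main obstacle.} The hard step is Step 2: showing that $\mathbb{D}(\mathcal{H}_\mathbb{Y})$ is saturated inside $\textnormal{End}(\mathbb{D}(\mathbb{Y}))$ rather than merely a lattice in it. That is, one must show that every endomorphism of the Dieudonné module that is $F$-stable with the right slopes actually extends to an endomorphism of $\mathbb{Y}$ over some fpqc cover. I would try first to get this from the definition of $\mathcal{H}_\mathbb{Y}[p^n]$ as the image of $\End(\mathbb{Y})\to\End(\mathbb{Y}[p^n])$. Lifting over a semiperfect cover should then be governed by Scholze–Weinstein's full faithfulness; if that fails, I would fall back on Chai–Oort's original computation in \cite{chaisustained}.
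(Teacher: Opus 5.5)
\textbf{There is a genuine gap: Step~2 is aimed at a false statement.}

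Put $E=\textnormal{End}_{\W(k)}(\mathbb{D}(\mathbb{Y}))$. This lattice is in general \emph{not} stable under the Frobenius and Verschiebung that $E[1/p]$ inherits by conjugation. Consequently $\mathbb{D}(\mathcal{H}_\mathbb{Y})$ is neither $E\cap(\text{slopes}\le 0)$ nor saturated in $E$.

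Already for $\mathbb{Y}$ the supersingular $p$-divisible group of height $2$ over $\Fpbar$ this fails. Here $E[1/p]$ is isoclinic of slope $0$ and $\mathcal{H}_\mathbb{Y}$ is \'etale. Moreover $\mathbb{D}(\mathcal{H}_\mathbb{Y})=\textnormal{End}(\mathbb{Y})\otimes_{\Zp}\W(k)=\mathcal{O}_D\otimes_{\Zp}\W(k)$ is an Iwahori order, of index $p$ in $E\simeq M_2(\W(k))$.

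The same happens in the connected part, which is what the proposition is about. Take $\mathbb{Y}=X_1\times X_2$ with $X_1$ supersingular of height $2$ and $X_2$ minimal isoclinic of slope $2/3$. The negative-slope part of $E[1/p]$ is the single block $\textnormal{Hom}(\mathbb{D}(X_1),\mathbb{D}(X_2))[1/p]$. The integral block $\textnormal{Hom}_{\W(k)}(\mathbb{D}(X_1),\mathbb{D}(X_2))$ is not preserved by the induced Verschiebung $h\mapsto p^{-1}V_{X_2}\circ h\circ F_{X_1}$: in the standard bases one writes down an $h$ for which this is non-integral. So that block is not a Dieudonn\'e module, and $\frak{a}^+_\mathbb{Y}\subsetneq E\cap(\text{slopes in }[-1,0[)$.

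Thus the ``saturation'' you single out as the main obstacle is not merely hard but false. The phrase ``largest $\W(k)$-submodule with slopes in $[-1,0[$'' in the statement must be read as ``largest submodule stable under $F$ and $V$ with those slopes''. Your sentence beginning ``Equivalently'' gives the correct characterization, but it is not equivalent to the intersection you wrote just before it.

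For the same reason, the reduction to $\mathbb{Y}$ being a sum of isoclinic pieces is only available for the rational statement of Step~1. The lattice $\frak{a}^+_\mathbb{Y}$ depends on $\mathbb{Y}$, not just on its isogeny class.

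The real content is Chai--Oort's theorem that $\mathbb{D}(\mathcal{H}_\mathbb{Y})\subseteq E$ is the largest sublattice stable under $F$ and $V$. Once you have it, intersecting with $\frak{a}_\mathbb{Y}$ picks out the connected summand, which is the largest $F,V$-stable submodule with slopes in $[-1,0[$. The paper does not reprove this; its proof is just the reference \cite[3.2]{chaisustained}.

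Your Scholze--Weinstein idea can plausibly be redirected at the correct target. A Dieudonn\'e submodule $N\subseteq E$ gives, for quasiregular semiperfect $R$, an inclusion of the $F$-invariants of $N\otimes\Acr(R)$ into those of $E\otimes\Acr(R)$. That is a homomorphism from the Tate module of the corresponding $p$-divisible group to $\End(\mathbb{Y})=T_p\mathcal{H}_\mathbb{Y}$, whence $N\subseteq\mathbb{D}(\mathcal{H}_\mathbb{Y})$. As written, however, the proposal does not carry this out and ultimately defers to Chai--Oort.
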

\noindent{\it Proof.}
See \cite[3.2]{chaisustained}.
$\qed$

\unit{Infinitesimal sustained deformations.} Now let us introduce the functor parameterizing infinitesimal sustained deformations of $\mathbb{Y}$. We denote by $\textbf{Def}(\mathbb{Y})$ the functor of global deformations of $\mathbb{Y}$, as discussed in \cite[5.4.6]{marcopol}: its value on a $k$-algebra $R$ is the set of isomorphism classes of pairs $(\mathbb{X},\rho)$, where $\mathbb{X}$ is a $p$-divisible group over $R$ and $\rho:\mathbb{Y}_{R}\to \mathbb{X}$ is a quasi-isogeny which restricts to an isomorphism on $R_\text{red}$. By rigidity of morphisms up to isogeny, $\textbf{Def}(\mathbb{Y})$ corresponds to the usual deformation functor when restricted to the category of Artin $k$-algebras.

\begin{deffa}
    The functor of infinitesimal sustained deformations of $\mathbb{Y}$ is defined by
    \begin{equation*}
        \defsus(\mathbb{Y})(R)=\{(\mathbb{X},\rho)\in\textbf{Def}(\mathbb{Y})(R)\text{ s.t. }\mathbb{X}\text{ is sustained modeled on }\mathbb{Y}\}
    \end{equation*}
    for a $k$-algebra $R$.
\end{deffa}
By \cite[4.3]{chaisustained} the functor $\defsus(\mathbb{Y})$ is representable by a smooth affine formal scheme of the form $\Spf(k[\![x_1,\dots,x_n]\!])$, and it comes with a universal sustained 
$p$-divisible group $\mathcal{Y}\to\defsus(\mathbb{Y})$ (see also \cite[5.4.6]{marcopol}). If $\mathcal{T}$ denotes the corresponding torsor, then for any $k$-algebra $R$ we have
\begin{equation*}
    \mathcal{T}(R)=\{((\mathbb{X},\rho),\varphi)\text{ with }(\mathbb{X},\rho)\in\defsus(\mathbb{Y})(R)\text{ and }\varphi:\mathbb{X}\to \mathbb{Y}_R\text{ an isomorphism}\},
\end{equation*}
and the natural map $\mathcal{T}(R)\to\defsus(\mathbb{Y})(R)$ sends $((\mathbb{X},f),\varphi)$ to $(\mathbb{X},f)$. From the affine formal scheme $\mathcal{T}$ we can extract an $\Aut^\circ(\mathbb{Y})$-torsor over $\defsus(\mathbb{Y})$, defined as the subfunctor $\mathcal{T}^\circ\subseteq\mathcal{T}$ consisting of the triples $((X,\rho),\varphi)$ such that
\begin{equation*}
    Y_{R_\text{red}}\xto{}{\rho}X_{R_\text{red}}\xto{}{\varphi}Y_{R_\text{red}}
\end{equation*}
is the identity. It is easy to see that the map
\begin{equation*}
    \Aut^\circ(\widetilde{\mathbb{Y}})\to \mathcal{T}^\circ,\quad \rho\mapsto((\mathbb{Y}_R,\rho),\mathrm{id}_{\mathbb{Y}_R})
\end{equation*}
is an isomorphism of sheaves and is $\Aut^\circ(\mathbb{Y})$-equivariant.

Therefore we can write $\defsus(\mathbb{Y})$ as a quotient of fpqc sheaves\footnote{To avoid the set-theoretic difficulties that fpqc sheafification usually poses, one can take the sheafification for any coarser topology containing limits of countable projective systems of finite flat covers.}
\begin{equation}\label{defsusquoziente}
    \defsus(\mathbb{Y})\simeq\Aut^\circ(\widetilde{\mathbb{Y}})\big/\Aut^\circ(\mathbb{Y}).
\end{equation}
We invite the reader to think of $\Aut^\circ(\widetilde{\mathbb{Y}})$ as a simply connected cover of the formal scheme $\defsus(\mathbb{Y})$.

\unit{Relationship with central leaves.} In \cite{chaisustained} the authors also consider the case of a quasi-polarized $p$-divisible group $(\mathbb{Y},\lambda)$ over $k$. They define the notion of a sustained quasi-polarized $p$-divisible group modeled on $(\mathbb{Y},\lambda)$, and they show the equivalence with $\Aut(\mathbb{Y},\lambda)$-torsors. They introduce the deformation functor $\defsus(\mathbb{Y},\lambda)$ and show that it is a smooth formal subscheme of $\defsus(\mathbb{Y})$. With these results in hand, we can then state one of the main results of \cite{chaisustained}, which strongly motivates the definitions and constructions of the previous paragraphs. 

Let $k=\Fpbar$, fix an integer $g\ge1$ and a point $x\in\mathcal{A}_g(k)$ which corresponds to a principally polarized abelian variety $(A_x,\lambda_x)$ of dimension $g$. The central leaf of $x$ as defined by Oort in \cite{oortfoliations} is the locally closed reduced subspace $C(x)\subseteq\calA_g$ which consists of the points $y\in\calA_g(k)$ such that $(A_x,\lambda_x)[p^\infty]\simeq(A_y,\lambda_y)[p^\infty]$. The following theorem of Chai--Oort describes the whole functor of points of $C(x)$. It thus gives a description of the formal completion $C(x)^{/x}$, which is not easily derived from  Oort's pointwise definition of $C(x)$.

\begin{thm}\label{centralsusglobale}
    The closed subscheme $C(x)\subseteq\calA_g$ parameterizes the families of principally polarized abelian varieties whose quasi-polarized $p$-divisible group is sustained modeled on $(A_x,\lambda_x)[p^\infty]$.
\end{thm}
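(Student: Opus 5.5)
This statement is Chai--Oort's theorem, recorded here as a citation, so the natural plan is to reconstruct the argument of \cite{chaisustained} in the present language. We compare two subfunctors of $\calA_g$. The first is the reduced locally closed subscheme $C(x)$. The second is the subfunctor $\mathcal{C}$ of families $(A,\lambda)/S$ whose quasi-polarized $p$-divisible group is sustained modeled on $(\mathbb{Y},\lambda_\mathbb{Y}):=(A_x,\lambda_x)[p^\infty]$.

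\textbf{Step 1: $\mathcal{C}$ is a locally closed subscheme of $\calA_g$ with the same $k$-points as $C(x)$.} Over a field extension of $k$, being sustained modeled on $(\mathbb{Y},\lambda_\mathbb{Y})$ just means geometric isomorphism with $(\mathbb{Y},\lambda_\mathbb{Y})$. So on $k$-points the two functors agree by Oort's definition. For representability, fix $n$ large, for instance larger than the isomorphism number of $\mathbb{Y}$, so that $\mathbb{X}[p^n]\simeq\mathbb{Y}[p^n]$ forces $\mathbb{X}\simeq\mathbb{Y}$ over $k$. Over the universal abelian scheme, consider the locus where $\Isom(\mathcal{A}[p^n],\mathbb{Y}[p^n])$, together with polarizations, is flat over the base of the correct rank. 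This locus is locally closed, by flattening stratification of the finite type scheme $\Isom$. One then checks that being sustained at level $n$ for large $n$ implies being sustained at all levels; this is the content of \cite[3.6]{chaisustained}. Hence $\mathcal{C}$ is locally closed, with underlying set $C(x)$.

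\textbf{Step 2: $\mathcal{C}$ is reduced, indeed smooth.} It suffices to check this formally at each $k$-point $y$. The completion $\mathcal{C}^{/y}$ is the functor of deformations of $(A_y,\lambda_y)$ whose $p$-divisible group stays sustained. By Serre--Tate, this is $\defsus(\mathbb{Y},\lambda)$. That functor is representable by $\Spf(k[\![x_1,\dots,x_d]\!])$, by \cite[4.3]{chaisustained} and its polarized variant mentioned above. So $\mathcal{C}$ is formally smooth at every closed point, hence smooth and in particular reduced.

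\textbf{Step 3: conclusion.} A reduced locally closed subscheme is determined by its underlying set, so $\mathcal{C}=C(x)$. This is exactly the statement: a family over $S$ lies in $C(x)$ if and only if its quasi-polarized $p$-divisible group is sustained.

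\textbf{Main obstacle.} I expect Step 1 to be the hardest part. It requires showing that sustainedness is a locally closed condition, and that a finite level $n$ controls all levels. This needs the smoothness of $\Aut(\mathbb{Y}[p^n])$ up to nilpotents and the stabilization of the image of $\Aut(\mathbb{Y}[p^{m}])\to\Aut(\mathbb{Y}[p^n])$ in \cite{chaisustained}. I would first try to deduce this from the structure of $\mathcal{H}_\mathbb{Y}$ as a $p$-divisible group, so that the torsor condition at level $n$ propagates to level $n+1$ by fppf-locally lifting isomorphisms.
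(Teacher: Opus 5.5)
The paper does not prove this statement; it only cites \cite[2.8]{chaisustained}, so your proposal has to stand on its own. Steps~2 and~3 are fine \emph{given} Step~1. If the sustained subfunctor $\mathcal{C}$ is representable by a locally closed subscheme of $\calA_g$, then its completions are $\defsus(\mathbb{Y},\lambda)\simeq\Spf(k[\![x_1,\dots,x_d]\!])$. So $\mathcal{C}$ is smooth, hence reduced, hence equal to $C(x)$.

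The genuine gap is Step~1, and the mechanism you propose for it does not work. The scheme $\Isom_S(\mathcal{A}[p^n],\mathbb{Y}[p^n]_S)$ is affine of finite type over the base, but in general it is neither finite nor proper. Its nonempty fibres are torsors under $\Aut(\mathbb{Y}[p^n])$, which is typically positive-dimensional. Already for a supersingular elliptic curve, $\End(\mathbb{Y}[p])$ contains a copy of $\mathbb{G}_a=\End(\alpha_p)$, via the maps $\mathbb{Y}[p]\to\alpha_p\xrightarrow{a}\alpha_p\to\mathbb{Y}[p]$. So ``flat of the correct rank'' has no meaning. Flattening stratification needs a coherent sheaf with proper support, so it is not available either.

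The second ingredient, that sustainedness at one large level forces it at all levels, is also not justified. The isomorphism number only controls fibres over fields. Moreover, \cite[3.6]{chaisustained} is a different statement: in this paper it is used to pass from an $\Aut(\mathbb{Y})$-torsor to a sustained $p$-divisible group.

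One way to close the gap is to replace $\Isom$ by an orbit.
\begin{enumerate}
\item Zariski-locally on $\calA_g$, trivialize the Hopf algebra of $\mathcal{A}[p^n]$ together with the level-$n$ polarization data. This gives a map to a finite type scheme $X$ of such structures on a free module of rank $p^{2gn}$, acted on by the smooth group $H=\GL_{p^{2gn}}$.
\item Being fppf-locally isomorphic to $(\mathbb{Y},\lambda)[p^n]$ means landing in the image sheaf of the orbit map $H\to X$ at the point $x_0$ given by $(\mathbb{Y},\lambda)[p^n]$.
\item Since $H$ is smooth, $H/\mathrm{Stab}(x_0)\to X$ is an immersion onto the reduced orbit, which is locally closed. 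Equivalently, $B\Aut\bigl((\mathbb{Y},\lambda)[p^n]\bigr)$ is the residual gerbe at a finite type point of the stack of truncated Barsotti--Tate groups of level $n$.
\item This produces locally closed subschemes $\mathcal{C}_{n+1}\subseteq\mathcal{C}_n$ of $\calA_g$. For $n$ large they all have underlying set $C(x)$, so they are closed subschemes of a single open subscheme of $\calA_g$. By Noetherianity the chain stabilizes.
\item The stable term represents $\mathcal{C}$. This gives both representability and the ``one level suffices'' statement, after which your Steps~2 and~3 go through.
\end{enumerate}
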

\noindent{\it Proof.}
See \cite[2.8]{chaisustained}.
$\qed$

\begin{cor}
\label{centralsus}
    There is an identification
    \begin{equation*}
        C(x)^{/x}\simeq\defsus(A_x[p^\infty],\lambda_x[p^\infty]),
    \end{equation*}
    compatibly with Serre-Tate's isomorphism.
\end{cor}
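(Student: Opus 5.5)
The plan is to deduce the corollary from Theorem \ref{centralsusglobale} by passing to formal completions on both sides and comparing functors of points on Artin local $k$-algebras with residue field $k$.

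First, I will describe $C(x)^{/x}$ via Serre--Tate. Since $C(x)\subseteq\calA_g$ is a locally closed subscheme, its completion $C(x)^{/x}$ is a closed formal subscheme of $\calA_g^{/x}$. For an Artin local $k$-algebra $R$ with residue field $k$, an $R$-point of $\calA_g^{/x}$ is a deformation of $(A_x,\lambda_x)$ over $R$ (the level structure deforms uniquely). The Serre--Tate theorem identifies this deformation functor with the deformation functor of the quasi-polarized $p$-divisible group $(\mathbb{Y},\lambda):=(A_x,\lambda_x)[p^\infty]$. Under this identification, a deformation $(\mathbb{X},\rho)$ is the $p$-divisible group of the deformed abelian variety, where $\rho$ is the identification on the closed fibre; on Artin rings this is exactly a point of $\textbf{Def}(\mathbb{Y},\lambda)$, by the rigidity remark following the definition of $\textbf{Def}(\mathbb{Y})$.

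Second, I will identify the subfunctor. By Theorem \ref{centralsusglobale}, a morphism $\Spec R\to\calA_g$ factors through $C(x)$ if and only if the pulled-back quasi-polarized $p$-divisible group is sustained modeled on $(\mathbb{Y},\lambda)$. Hence, on Artin local $R$, the points of $C(x)^{/x}$ correspond under Serre--Tate exactly to the pairs $(\mathbb{X},\rho)$ with $\mathbb{X}$ sustained modeled on $(\mathbb{Y},\lambda)$. These are precisely the points of $\defsus(\mathbb{Y},\lambda)$. Since both sides are affine formal schemes, namely colimits of their restrictions to Artin local algebras (the left side as the completion of a scheme at a point, the right side being of the form $\Spf(k[\![x_1,\dots,x_n]\!])$ by \cite[4.3]{chaisustained}), an isomorphism of these functors on Artin rings yields the claimed isomorphism. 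By construction it is compatible with Serre--Tate.

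The main subtlety is the nonreducedness of $C(x)$. Theorem \ref{centralsusglobale} is stated for the subscheme $C(x)$ as a functor, so I will use it scheme-theoretically, including for non-reduced Artin test rings, rather than pointwise. If one only knows $C(x)$ as a reduced subspace, I would add the argument that $\defsus(\mathbb{Y},\lambda)$ is formally smooth and that $C(x)$ is smooth, so both completions are reduced and of the same dimension. Then an inclusion of one into the other, obtained from the sustainedness of the universal family over $C(x)$, is an isomorphism. I expect this last point to be the only real obstacle.
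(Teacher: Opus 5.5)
Your proposal is correct and follows the paper's argument. The paper's proof is the one-line combination of Theorem \ref{centralsusglobale}, used scheme-theoretically to decide which $R$-families factor through $C(x)$, with the Serre--Tate isomorphism $\calA_g^{/x}\simeq\textbf{Def}(A_x[p^\infty],\lambda_x[p^\infty])$; your comparison of functors on Artin local test rings just spells this out, and your fallback through smoothness and a dimension count is unnecessary, since the theorem already says that the reduced $C(x)$ represents the sustained condition.
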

\noindent{\it Proof.}
Combine Theorem \ref{centralsusglobale} with the Serre--Tate isomorphism $\calA_g^{/x}\simeq\textbf{Def}(A_x[p^\infty],\lambda_x[p^\infty])$.
$\qed$

\subsection{Tate-linear formal subvarieties}\label{sezionetl}

To describe the condition of Tate-linearity we use an approach which is halfway between that of \cite{marcopol} and Chai-Oort's approach in \cite{chaioortTL}. The reader can find in section $7$ of the former a dictionary between the two points of view. The reader looking for more details (\textit{e.g.} the canonical filtration on Tate-linear varieties) and motivation for the following definitions is encouraged to consult those two papers. 

We assume that the base field $k$ is perfect. Combining the quasi-polarized analogue of \eqref{defsusquoziente} with Corollary \ref{centralsus}, we can write the central leaf through $x\in\calA_g(k)$ as a quotient of fpqc sheaves
\begin{equation}\label{centralquot}
    C(x)^{/x}\simeq\Aut^\circ(\widetilde{\mathbb{Y}},\lambda)/\Aut^\circ(\mathbb{Y},\lambda)\subseteq\Aut^\circ(\widetilde{\mathbb{Y}})/\Aut^\circ(\mathbb{Y})\simeq\defsus(\mathbb{Y}),
\end{equation}
where $(\mathbb{Y},\lambda)=(A_x,\lambda_x[p^\infty])$. As discussed above, the formal group $\Aut^\circ(\widetilde{\mathbb{Y}})$ is uniquely determined by a datum of (semi-)linear algebra: the Dieudonné module $\frak{a}_\mathbb{Y}^+$ together with its natural Lie bracket. In this section we show that $C(x)^{/x}$ is a Tate-linear formal subvariety of $\defsus(\mathbb{Y})$, \textit{i.e.} a formal subvariety arising from subobjects of this linear algebra datum.

\unit{Dieudonné-Lie algebras.} A Dieudonné-Lie algebra (over $K$) is a covariant Dieudonné module $\frak{m}$ over $K$, endowed with a Lie bracket which is compatible with the Frobenius-linear endomorphism of $\frak{m}$, see \cite[4.2.1]{marcopol}. The key examples for us are $\frak{a}_\mathbb{Y}:=\frak{a}_\mathbb{Y}^+[p^{-1}]$, which is nilpotent as a Lie algebra \cite[4.1.4]{marcopol}, and its Dieudonné-Lie subalgebras $\frak{b}\subseteq\frak{a}_\mathbb{Y}$. We will also write $\frak{b}^+=\frak{b}\cap\frak{a}_\mathbb{Y}^+$ for the natural integral lattice of $\frak{b}$.

A Dieudonné-Lie algebra $\frak{b}\subseteq\frak{a}_\mathbb{Y}$ corresponds, by Dieudonné theory, to a $p$-divisible subgroup $\mathcal{H}^\circ_{\mathbb{Y},\frak{b}}$ of $\mathcal{H}^\circ_{\mathbb{Y}}$. By bilinear Dieudonné theory \cite[4.3.1]{marcopol}, the sheaf
\begin{equation*}
    \Aut^\circ_\frak{b}(\widetilde{\mathbb{Y}}):=\exp\big(\widetilde{\mathcal{H}}^\circ_{\mathbb{Y},\frak{b}}\big)
\end{equation*}
is a closed formal subgroup of $\Aut^\circ(\widetilde{\mathbb{Y}})$, denoted $\widetilde{\Pi}(\frak{b})$ in \cite{marcopol}. We also define
\begin{equation*}
    \Aut^\circ_\frak{b}(\mathbb{Y}):= \Aut^\circ_\frak{b}(\widetilde{\mathbb{Y}})\cap\Aut^\circ(\mathbb{Y}).
\end{equation*}

\begin{rem}\label{hodgesub}
    As explained in \cite[5.5.8]{marcopol}, these groups appear naturally when generalizing the quotient presentation \eqref{centralquot} to central leaves of Shimura subvarieties of $\calA_g$ with good reduction. Let $(G,X)\subseteq(\mathrm{GSp}_{2g},\pm\mathbb{H}_g)$ be a Shimura subdatum with good reduction at $p$, whose special fibre $\overline{\mathcal{S}}$ embeds into $\calA_g$. The universal $p$-divisible group on $\overline{\mathcal{S}}$ carries some $F$-invariant crystalline tensors $(\xi_i)$ defined by the embedding $G\subseteq\mathrm{GSp}_{2g}$ \cite[5.2.1]{marcopol}. Then $\frak{b}$ is the subalgebra of $\frak{a}_\mathbb{Y}$ acting trivally on these tensors, and the group $\Aut^\circ_\frak{b}(\mathbb{Y})$ parameterizes the automorphisms of $\mathbb{Y}$ fixing the tensors $\xi_i|_x\in\mathbb{D}(\mathbb{Y})^\otimes$.
\end{rem}

It follows from \eqref{exppiquadro} that there is a short exact sequence of affine group schemes
\begin{equation*}
    0\to pT_p\mathcal{H}^\circ_{\mathbb{Y},\frak{b}}\xto{}{\exp}\Aut^\circ_\frak{b}(\mathbb{Y})\to H'\to0
\end{equation*}
with $H'$ is an infinitesimal group scheme.  We will use the following technical result later on, but we postpone the proof to the end of the chapter.

\begin{lem}\label{autbeqrsp}
    Writing $\Aut^\circ_\frak{b}(\mathbb{Y})=\Spec(C_\frak{b})$, the ring $C_\frak{b}$ is $f$-quasiregular semiperfect. More precisely, it is of the form $k[x_1^{\pmeninf},\dots,x_m^{\pmeninf}]/(x_1,\dots,x_m)$.
\end{lem}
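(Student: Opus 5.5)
The idea is to realise $\Aut^\circ_\frak{b}(\mathbb{Y})$ as a closed subscheme of the formal scheme $\Aut^\circ_\frak{b}(\widetilde{\mathbb{Y}})$, whose coordinate ring is explicitly perfect. We then check that it is cut out by the kind of ideal that defines a Tate module inside a universal cover.

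\emph{Step 1: coordinates on $\Aut^\circ_\frak{b}(\widetilde{\mathbb{Y}})$.} The exponential \eqref{exp} restricts to an isomorphism of affine formal schemes $\widetilde{\mathcal{H}}^\circ_{\mathbb{Y},\frak{b}}\simeq\Aut^\circ_\frak{b}(\widetilde{\mathbb{Y}})$. The group $\mathcal{H}^\circ_{\mathbb{Y},\frak{b}}$ is a connected $p$-divisible group, say of dimension $m$. Example \ref{deftpuc} then gives
\begin{equation*}
    \Aut^\circ_\frak{b}(\widetilde{\mathbb{Y}})\simeq\Spf\big(k[\![x_1^{\pmeninf},\dots,x_m^{\pmeninf}]\!]\big),
\end{equation*}
and under this isomorphism $T_p\mathcal{H}^\circ_{\mathbb{Y},\frak{b}}$ is cut out by $(x_1,\dots,x_m)$. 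Multiplication by $p$ on a connected $p$-divisible group is, up to an automorphism of the formal group, a power of Frobenius composed with an isomorphism. Using this, I would also show that $\exp(pT_p\mathcal{H}^\circ_{\mathbb{Y},\frak{b}})$ is cut out by an ideal of the form $(x_1^{p^{a_1}},\dots,x_m^{p^{a_m}})$, after possibly changing coordinates. Here I would use that $pT_p\mathcal{H}=T_p$ of the isogenous group $p^{-1}$-twisted, i.e.\ $T_p$ of the $p$-divisible group $\mathcal{H}$ with the identification $\widetilde{\mathcal{H}}\xto{}{p^{-1}}\widetilde{\mathcal{H}}$.

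\emph{Step 2: sandwich $\Aut^\circ_\frak{b}(\mathbb{Y})$.} By the short exact sequence with infinitesimal cokernel $H'$, there is an $N$ with
\begin{equation*}
    \exp(pT_p\mathcal{H}^\circ_{\mathbb{Y},\frak{b}})\subseteq\Aut^\circ_\frak{b}(\mathbb{Y})\subseteq F^{-N}\exp(pT_p\mathcal{H}^\circ_{\mathbb{Y},\frak{b}}),
\end{equation*}
as closed subschemes of $\Aut^\circ_\frak{b}(\widetilde{\mathbb{Y}})$. Hence $C_\frak{b}$ is a quotient of $k[x_i^{\pmeninf}]/(x_i^{p^{a_i}})$ by an ideal $J$ that contains $(x_i^{p^{a_i-N}})$. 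Moreover $\Aut^\circ_\frak{b}(\mathbb{Y})$ is a group scheme which is finite flat over $\exp(pT_p\mathcal{H}^\circ_{\mathbb{Y},\frak{b}})$ (it is an $H'$-torsor over it). Thus $C_\frak{b}$ is a finite flat extension of a ring of the desired form.

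\emph{Step 3: identify $J$.} This is the main obstacle: I must show that the ideal is again generated by $p$-power monomials in suitable coordinates. My first attempt is to find a $\Zp$-lattice $L$ with $pT_p\mathcal{H}^\circ_{\mathbb{Y},\frak{b}}\subseteq L\subseteq p^{-c}T_p\mathcal{H}^\circ_{\mathbb{Y},\frak{b}}$ such that
\begin{equation*}
    \log\Aut^\circ_\frak{b}(\mathbb{Y})=T_p\mathcal{H}'
\end{equation*}
for some $p$-divisible group $\mathcal{H}'$ isogenous to $\mathcal{H}^\circ_{\mathbb{Y},\frak{b}}$. Concretely, I would take the Dieudonné lattice $\frak{c}^+\subseteq\frak{b}$ corresponding to $\log$ of $\Aut^\circ(\mathbb{Y})\cap\Aut^\circ_\frak{b}(\widetilde{\mathbb{Y}})$. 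The difficulty is that $\log$ is not a homomorphism, so additivity of the image is not automatic. I would deal with this by checking it on $R$-points for $R$ semiperfect, where Scholze--Weinstein Dieudonné theory makes all objects explicit. On such points the BCH series converges and the image is closed under sums and $\Zp$-scaling: its failure to be additive is controlled by brackets, which lie in a deeper filtration step because $\frak{a}_\mathbb{Y}$ is nilpotent and its slopes are negative. Granting that the image is $T_p\mathcal{H}'$, Example \ref{deftpuc} applied to $\mathcal{H}'$ gives $\Aut^\circ_\frak{b}(\mathbb{Y})\simeq T_p\mathcal{H}'\simeq\Spec\big(k[x_1^{\pmeninf},\dots,x_m^{\pmeninf}]/(x_1,\dots,x_m)\big)$ as schemes. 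The coordinates here are those of the universal cover $\widetilde{\mathcal{H}}'=\widetilde{\mathcal{H}}^\circ_{\mathbb{Y},\frak{b}}$, so $C_\frak{b}$ has the stated form. If the lattice argument fails, the fallback is a Frobenius induction: filter $\frak{b}$ by the lower central series, and use the exact sequences of the corresponding graded pieces (which are abelian, where $\exp$ is additive) to build the regular sequence inductively.
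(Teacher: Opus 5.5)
There is a genuine gap, at exactly the step you flag as the main obstacle. Step 3 needs $\log\bigl(\Aut^\circ_{\mathfrak b}(\mathbb Y)\bigr)$ to be an additive subgroup of $\widetilde{\mathcal H}^\circ_{\mathbb Y,\mathfrak b}$, that is, a lattice $T_p\mathcal H'$. This is false in general. Closure of the image of $\log$ under addition is a Lazard-type condition, and it fails when $p$ is small relative to the nilpotency class, because of the denominators in the Baker--Campbell--Hausdorff series.

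For instance, let $\mathbb Y$ be a product of three isoclinic factors of distinct slopes, $\mathfrak b=\mathfrak a_{\mathbb Y}$ and $p=2$. Then $\End^\circ(\mathbb Y)$ is block strictly upper triangular, with entries $a,b,c$ in the blocks $T_p\mathcal H_{12},T_p\mathcal H_{23},T_p\mathcal H_{13}$. Since $\log(1+f)=f-f^2/2$, its $(1,3)$-entry is $c-ab/2$. So $a=\log(1+a)$ and $b=\log(1+b)$ lie in the image, but $a+b$ does only if $ab\in 2\,T_p\mathcal H_{13}$. This fails for the universal pair, because composition $\mathfrak a^+_{12}\otimes\mathfrak a^+_{23}\to\mathfrak a^+_{13}$ is surjective.

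Your heuristic that the defect ``lies in a deeper filtration step'' does not rescue this. Lying deeper in the lower central series says nothing about divisibility by the BCH denominators, and negative slopes only make $F$ topologically nilpotent. The fallback via the lower central series is only a sketch. It would need three things you do not address:
\begin{enumerate}
\item the images in the abelian graded pieces are Tate modules of isogenous $p$-divisible groups;
\item the resulting torsors over rings $k[y^{\pmeninf}]/(y)$ are trivial;
\item an inductive framework that covers quotients no longer of the form $\Aut^\circ_{\mathfrak b'}(\mathbb Y')$.
\end{enumerate}
Also, in Step 2 the torsor runs the other way: $\Aut^\circ_{\mathfrak b}(\mathbb Y)$ is an $\exp(pT_p\mathcal H^\circ_{\mathbb Y,\mathfrak b})$-torsor over $H'$. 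In any case, the sandwich alone gives no control of $J$.

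The idea you are missing is to take the regular sequence from the base rather than from a lattice in $\mathfrak b$. The paper writes $\Aut^\circ_{\mathfrak b}(\mathbb Y)$ as the fibre over the base point of the torsor $\Aut^\circ_{\mathfrak b}(\widetilde{\mathbb Y})^{\alg}\to V(\mathfrak b)^{\alg}$. Here $V(\mathfrak b)^{\alg}=\Spec k[\![y_1,\dots,y_m]\!]$ is smooth by Chai--Oort. The total space is the perfection of $V(\mathfrak b)^{\alg}$ (Setting~\ref{setting}, relying on D'Addezio--van Hoften). Hence $C_{\mathfrak b}=k[\![y_1,\dots,y_m]\!]_{\mathrm{perf}}/(y_1,\dots,y_m)\simeq k[y_1^{\pmeninf},\dots,y_m^{\pmeninf}]/(y_1,\dots,y_m)$, and the non-additivity of $\log$ never enters.
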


\unit{Definition of Tate-linearity.} Let $\frak{b}\subseteq\frak{a}_Y$ be a Dieudonné-Lie $K$-subalgebra. Bby \cite[Proposition 3.1]{chaioortTL}, the quotient sheaf
\begin{equation*}
    V(\frak{b}):=\Aut^\circ_\frak{b}(\widetilde{\mathbb{Y}})/\Aut^\circ_\frak{b}(\mathbb{Y})\subseteq\Aut^\circ(\widetilde{\mathbb{Y}})/\Aut(\mathbb{Y})=\defsus(\mathbb{Y})
\end{equation*}
is a closed formal subscheme of $\defsus(\mathbb{Y})$ of the form $\Spf(k[\![y_1,\dots,y_m]\!])$.

\begin{rem}
    Consider $x\in\calA_g(k)$ with corresponding quasi-polarized $p$-divisible group $(\mathbb{Y},\lambda)=(A_x[p^\infty],\lambda_x[p^\infty])$. If $\frak{b}$ denotes the intersection $\frak{a}_\mathbb{Y}\cap\mathfrak{gsp_{(\mathbb{D}(\mathbb{Y})[p^{-1}],\lambda)}}$ inside $\mathfrak{gl}_{\mathbb{D}(\mathbb{Y})[p^{-1}]}$, then we have $C(x)^{/x}=V(\frak{b})$ as formal subspaces of $\defsus(\mathbb{Y})$ \cite[5.5.8]{marcopol}. In the context of Remark \ref{hodgesub}, the Dieudonné-Lie subalgebra $\frak{b}\subseteq\frak{a}_\mathbb{Y}$ is the subalgebra acting trivially on the crystalline tensor defined by the quasi-polarization of $\mathbb{Y}$.
\end{rem}

\begin{deffa}
    (1) A formal subvariety $Z\subseteq\defsus(\mathbb{Y})$ is Tate-linear if it is of the form $V(\frak{b})$ for some Dieudonné-Lie $K$-subalgebra $\frak{b}$. 

    (2) Let $C\subseteq\calA_g$ be a central leaf. A closed subvariety $Z\subseteq C$ is Tate-linear at a point $x\in Z(k)$ if $Z^{/x}\subseteq C^{/x}$ is Tate-linear under the identification of $C^{/x}$ as a subspace of $\defsus(A_x[p^\infty])$ of Corollary \ref{centralsus}.
\end{deffa}

\unit{Unipotent groups associated to Dieudonné-Lie algebras.} A Dieudonné-Lie algebra $\frak{b}\subseteq\frak{a}_\mathbb{Y}$ is in particular a nilpotent Lie algebra over $K$. As such, it defines a smooth connected unipotent group scheme $U(\frak{b})$ over $K$ by integration of the Lie bracket. Via the inclusion $\frak{a}_\mathbb{Y}\subseteq\textnormal{End}_K(\mathbb{D}(\mathbb{Y})[p^{-1}])$, the group $U(\frak{b})$ is naturally identified with the unique closed connected subgroup of $\GL(\mathbb{D}(\mathbb{Y})[p^{-1}])$ with Lie algebra $\frak{b}$. In some respects it will be natural to work over $\W(k)$, so we also introduce $U(\frak{b})_{\textnormal{int}}$, the schematic closure of $U(\frak{b})$ in $\GL(\mathbb{D}(\mathbb{Y}))$, which is a flat group scheme over $\W(k)$. 

A brief description of the Hopf algebras underlying these two group schemes is in order. Let us write
\begin{equation}\label{anellideigruppi}
    \GL(\mathbb{D}(\mathbb{Y}))=\Spec(R^+_\mathbb{Y}),\quad U(\frak{b})=\Spec(R_\frak{b}),
\end{equation} 
so $R_\frak{b}$ is a quotient of $R_\mathbb{Y}=R_\mathbb{Y}^+[p^{-1}]$. Then we have an isomorphism
\begin{equation}\label{anelloub}
    U(\frak{b})\simeq\Spec(\textnormal{Sym}_{K}(\frak{b}^\vee)),
\end{equation}
given by matrix exponentiation. On the other hand, $U(\frak{b})_{\textnormal{int}}=\Spec(R_\frak{b}^+)$ where $R_\frak{b}^+$ is the image of the composition $R_\mathbb{Y}^+\hookrightarrow R_\mathbb{Y}\onto{}R_\frak{b}$.

\unit{Our setup for studying monodromy groups.} The following setting sums up the configuration of objects and maps that we consider throughout the paper.

\begin{setting}\label{setting}
    Let $x\in\mathcal{A}_g(k)$ and let $\mathbb{Y}=(A_x[p^\infty],\lambda_x[p^\infty])$. If $\frak{b}\subseteq\frak{a}_Y$ is a Dieudonné-Lie subalgebra we have a commutative diagram
    \begin{equation}\label{diagramsetting}
    \begin{tikzcd}
	{\Aut_\frak{b}^\circ(\mathbb{Y})} & {\Aut^\circ(\mathbb{Y})} \\
	{\Aut^\circ_\frak{b}(\widetilde{\mathbb{Y}})} & {\Aut^\circ(\widetilde{\mathbb{Y}})} \\
	{V(\frak{b})} & {\defsus(\mathbb{Y})}
	\arrow[from=1-1, to=1-2]
	\arrow[from=1-1, to=2-1]
	\arrow[from=1-2, to=2-2]
	\arrow[from=2-1, to=2-2]
	\arrow[from=2-1, to=3-1]
	\arrow[from=2-2, to=3-2]
	\arrow[hook, from=3-1, to=3-2]
    \end{tikzcd}
    \end{equation}
    where each bottom vertical map is a torsor for the group above it.
    
    Let $\mathcal{M}$ denote the isocrystal associated to  the universal $p$-divisible group over $\defsus(Y)$, and write $\mathcal{M}_{V(\frak{b})}$ for its pullback to $V(\frak{b})$. The monodromy group $G(\mathcal{M}_{V(\frak{b})})$ is canonically a subgroup of $\GL(\mathbb{D}(\mathbb{Y})[p^{-1}])$, and since $\mathcal{M}$ comes from an $F$-isocrystal its Lie algebra is in fact contained in $\mathbb{D}(\mathcal{H}_\mathbb{Y})$ \cite[5.5.1]{marcopol}. Proposition \ref{coriso}, which says that $G(\mathcal{M}_{V(\frak{b})})=U(\frak{b})$, is equivalent to the fact that $\Lie(G(\mathcal{M}_{V(\frak{b}}))$ is contained in $\frak{b}$. And Theorem \ref{teomonodromia}, which says that $V(\frak{b})$ is the largest smooth formal subscheme $S\subseteq\defsus(Y)$ such that $G(\mathcal{M}_{S})\subseteq U(\frak{b})$, can also be restated in terms of Lie algebras.

    Finally, as explained in \cite[6.3.3]{marcopol} the torsor $\Aut^\circ_\frak{b}(\widetilde{\mathbb{Y}})\to V(\frak{b})$ can be algebraized to an $\Aut_\frak{b}^\circ(\mathbb{Y})$-torsor $\Aut^\circ_\frak{b}(\widetilde{\mathbb{Y}})^\alg\to V(\frak{b})^\alg$. As a scheme, $\Aut^\circ_\frak{b}(\widetilde{\mathbb{Y}})^\alg$ is isomorphic to the perfection of $V(\frak{b})^\alg$. One way of seeing this is the following: taking the limit of the morphism $\Aut^\circ_\frak{b}(\widetilde{\mathbb{Y}})^\alg\to V(\frak{b})^\alg$ along the Frobenius maps gives rise to a $\varprojlim_F\Aut^\circ_\frak{b}(\mathbb{Y})$-torsor $\varprojlim_F\Aut^\circ_\frak{b}(\widetilde{\mathbb{Y}})^\alg\to\varprojlim_FV(\frak{b})^\alg$. However $\varprojlim_F\Aut^\circ_\frak{b}(\mathbb{Y})$ is the trivial group scheme by Lemma \ref{autbeqrsp}, and $\Aut^\circ_\frak{b}(\widetilde{\mathbb{Y}})^\alg$ is perfect (see also \cite[4.3.13]{marcopol}). 
    
    By the discussion at the end of Subsection \ref{monodromyiso}, our goal is therefore to understand the monodromy groups $G(\mathcal{M}_{V(\frak{b})^\alg})$, where $\mathcal{M}_{V(\frak{b})^\alg}$ denotes the $F$-isocrystal associated to the algebraization of the universal $p$-divisible group restricted to $V(\frak{b})$.
\end{setting}

\noindent{\it Proof of Lemma 2.9.} By construction we have
\begin{equation}\label{formaCb}
    \Aut^\circ_\frak{b}(\mathbb{Y})=\Aut^\circ_\frak{b}(\widetilde{\mathbb{Y}})^\alg\times_{V(\frak{b})^\alg}\Spec(k).
\end{equation}
Since $\Aut^\circ_\frak{b}(\widetilde{\mathbb{Y}})^\alg$ is the perfection of $V(\frak{b})^\alg=\Spec(k[\![x_1,\dots,x_n]\!])$ this proves the lemma.
$\qed$

\section{Dieudonné theory for quasiregular semiperfect rings}\label{dieudonnesw}

In this chapter we recall some elements of Dieudonné theory due to Scholze--Weinstein \cite{scholzeweinstein} and we apply them to the setting of Tate-linear varieties. The main result we use from their paper is that the Dieudonné functor is fully faithful over $R$ when $R$ is an $f$-quasiregular semiperfect $k$-algebra.

\begin{thm}[{\cite[Theorem A]{scholzeweinstein}}]\label{scholzeweinstein}
    The Dieudonné functor, which maps a $p$-divisible group $\mathbb{X}$ over $R$ to the Dieudonné crystal $\mathbb{D}(\mathbb{X})$ (or $\mathbb{D}^\vee(\mathbb{X})$), is fully faithful.
\end{thm}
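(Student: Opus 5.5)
This is Scholze--Weinstein's Theorem A, and the paper cites it rather than proving it. The sketch below is how I would reconstruct their argument for a quotient $R=S/J$, where $S$ is a perfect $k$-algebra and $J$ is generated by a regular sequence.

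The plan is to identify Dieudonné modules over $R$ with a concrete linear-algebra category over the ring $A_{\mathrm{cris}}(R)$. Full faithfulness is then reduced, by a descent argument, to a computation on Hom-groups that one can carry out explicitly. The first step is to compute the crystalline site of $\Spec R$. I would show that $A_{\mathrm{cris}}(R)$, the $p$-completed PD-envelope of $W(S)\to R$, is the final object of the crystalline site. This holds because $R^\flat\to R$ is surjective with kernel generated, up to perfection, by the regular sequence. It follows that a Dieudonné crystal over $R$ is the same as a finite projective $A_{\mathrm{cris}}(R)$-module $M$ with $F$ and $V$ satisfying the usual relations. The regularity hypothesis is what makes $A_{\mathrm{cris}}(R)$ $p$-torsion free and lets one control $\varphi$ on it. In the present applications $R$ is of the form $k[x_i^{\pmeninf}]/(x_i)$, and for such rings this computation is explicit.

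The second step is to deduce faithfulness, and more, from the perfect case. For a perfect ring, classical Dieudonné theory (Gabber, Lau) gives an equivalence with modules over $W(S)$. One then reduces homomorphisms $\mathbb{X}\to\mathbb{X}'$ over $R$ to statements about Tate modules and universal covers. Using the exact sequence $0\to T_p\mathbb{X}\to\widetilde{\mathbb{X}}\to\mathbb{X}\to 0$ of Example \ref{deftpuc}, one checks that $\widetilde{\mathbb{X}}(R)=\widetilde{\mathbb{X}}(R^\flat)$, since universal covers are insensitive to nilpotent thickenings, and likewise for quasi-isogenies. Therefore $\Hom(\mathbb{X},\mathbb{X}')[1/p]$ is computed over the perfect ring $R^\flat$. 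On the Dieudonné side, one compares $M\otimes A_{\mathrm{cris}}(R)[1/p]$ with the corresponding object over $B^+_{\mathrm{cris}}$-type rings. The result is full faithfulness up to isogeny.

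The third step, which I expect to be the main obstacle, is to pass from isogeny to integral statements. Given a morphism of Dieudonné modules $f:\mathbb{D}(\mathbb{X})\to\mathbb{D}(\mathbb{X}')$, the isogeny result produces a quasi-homomorphism $g$ with $\mathbb{D}(g)=f$. One must then show that $p$-integrality of $f$ forces $g$ to be an actual homomorphism. I would attempt this with the Grothendieck--Messing description of $\mathbb{X}[p]$ via the Hodge filtration inside $\mathbb{D}(\mathbb{X})_R$. The idea is to show that the obstruction, namely $p^n g$ vanishing on $\mathbb{X}[p^n]$, is detected on the crystal. Doing this requires a careful analysis of $\varphi$-divided-power structures on $A_{\mathrm{cris}}(R)$, specifically the fact that the filtration $\mathrm{Fil}^1$ is preserved. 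This technical point is where the quasiregular semiperfect hypothesis enters essentially, and it is where I would follow Scholze--Weinstein's descent from a quasisyntomic cover closely.
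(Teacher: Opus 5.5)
The paper does not prove this statement; it quotes it from Scholze--Weinstein. Your sketch therefore has to stand on its own, and it breaks at Step~2.

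You assert $\widetilde{\mathbb{X}}(R)=\widetilde{\mathbb{X}}(R^\flat)$ ``since universal covers are insensitive to nilpotent thickenings''. From this you conclude that $\Hom(\mathbb{X},\mathbb{X}')[1/p]$ is computed over the perfect ring $R^\flat$. But $R^\flat\to R$ is not a nilpotent thickening: $R^\flat$ is perfect, hence reduced, so the kernel $I$ contains no nonzero nilpotents. The crystalline nature of $\widetilde{\mathbb{X}}$ only relates $R$ to the thickenings $R^\flat/I^n$, never to $R^\flat$ itself. The same is true of the rigidity of quasi-isogenies.

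The conclusion is in fact false, even for groups defined over $\Fp$. Take $R=k[x^{\pmeninf}]/(x)$, so that $R^\flat=k[\![x^{\pmeninf}]\!]$ (note $R\cong C_{\mu_{p^\infty}}$). Let $\mathbb{X}=\Qp/\Zp$ and $\mathbb{X}'=\mu_{p^\infty}$.
\begin{itemize}
\item Over $R$, the rule $p^{-n}\mapsto 1+x^{p^{-n}}$ defines a nonzero homomorphism. It survives inverting $p$, because $\Hom(\Qp/\Zp,-)=T_p(-)$ is torsion free.
\item Over the reduced ring $R^\flat$, one has $\mu_{p^n}(R^\flat)=\{1\}$, so the corresponding Hom group is $0$.
\end{itemize}
The Dieudonn\'e side shows the same discrepancy. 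In $W(R^\flat)$ the equation $F(a)=pa$ forces $a=0$. In $\Acr(R)$, by contrast, it has the nonzero solution $\log[\epsilon]$ with $\epsilon=1+x\in R^\flat$. Up to a Frobenius twist, this is the element $\log([x_0])$ of the Example after Proposition~\ref{isomsm}, and it is exactly the phenomenon recorded in Remark~\ref{trivialintrem}. So the whole content of the theorem lies in the gap between $\Acr(R)$ and $W(R^\flat)$. It cannot be recovered by passing to the perfect ring and invoking Gabber's theorem.

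What is missing is an argument carried out over $R$ itself. It must compare homomorphisms over $R$ (or universal covers, up to isogeny) with Frobenius-compatible maps of crystals valued in $\Acr(R)$ or $B^+_{\mathrm{cris}}(R)$. The prototype is the correspondence between the homomorphism above and $\log[\epsilon]$. Producing such period elements from maps of Dieudonn\'e modules is where the real work is.

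Your Step~3 does not supply the integral argument either.
\begin{itemize}
\item Grothendieck--Messing classifies lifts along nilpotent PD thickenings in terms of the Hodge filtration. It gives no description of $\mathbb{X}[p]$ in terms of $\mathbb{D}(\mathbb{X})_R$.
\item Deferring to ``Scholze--Weinstein's descent'' is a pointer to the source, not a proof step.
\item The regular-sequence hypothesis is named but never put to work. Through the $p$-torsion-freeness of $\Acr(R)$, it is what should let you pass from ``$\mathbb{D}(p^ng)=p^nf$ is divisible by $p^n$'' to ``$p^ng$ kills $\mathbb{X}[p^n]$''. That implication is precisely the step your proposal leaves unargued.
\end{itemize}
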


Semiperfect rings $R$ have a universal PD-thickening $\Acr(R)\onto{} R$. Thus a contravariant Dieudonné crystal over $R$ is the same as an $\Acr(R)$-module $N$ together with maps
\begin{equation*}
    V:N\otimes_{\Acr(R),F}\Acr(R)\to N,\quad F:N\to N\otimes_{\Acr(R),F}\Acr(R)
\end{equation*}
satisfying $FV=VF=p$. Covariant Dieudonné crystals admit a similar description. For example, $\mathbb{D}^\vee((\Qp/\Zp)_R)$ is the free $\Acr(R)$-module on one generator $e$ satisfying $F(e)=e$. We will often consider $R$ of the form $R=T[a_1^{\pmeninf},\dots,a_n^{\pmeninf}]/(a_1,\dots,a_n)$, where $T$ is a perfect $\Fp$-algebra. We then have the following formula for the perfection:
\begin{equation}\label{calcoloperf}
    R^\flat\simeq T[\![a_1^{\pmeninf},\dots,a_n^{\pmeninf}]\!],
\end{equation}
and $\Acr(R)$ is the $p$-completion of the ring
\begin{equation}\label{calcoloacris}
    W(T[\![a_1^{\pmeninf},\dots,a_n^{\pmeninf}]\!])\Big[\frac{[a_i]^m}{m!},\phantom{e} m\ge0\Big],
\end{equation}
see e.g. {\cite[2.5.3-2.5.4]{drinfeldacris}}.

\subsection{A construction from Scholze--Weinstein}

Let $\mathbb{X}$ be a $p$-divisible group over $k$ and let $M$ be its contravariant Dieudonné module. The $p$-adic Tate module $T_p\mathbb{X}=\varprojlim\mathbb{X}[p^n]$ of Example \ref{deftpuc} can equivalently be described as the hom-sheaf
\begin{equation*}
    \sHom_k(\Qp/\Zp,\mathbb{X}).
\end{equation*}
As mentioned above, $T_p\mathbb{X}$ is representable by an $f$-quasiregular semiperfect $k$-algebra which we denote by $C_\mathbb{X}$, which is also a cocommutative Hopf algebra. By functoriality, $\Acr(C_\mathbb{X})$ inherits the structure of a cocommutative formal Hopf algebra. We write $\Delta$ for the comultiplication of eithre Hopf algebra, which should not cause any confusion.

The following consequence of Theorem \ref{scholzeweinstein} is also discussed in \cite[Theorem 3.3.2]{drinfeldacris}, where it is deduced from classical Dieudonné theory. 

\begin{thm}\label{functorTp}
   If $R$ is an $f$-quasiregular semiperfect $k$-algebra, there exists a canonical isomorphism
    \begin{equation}\label{drinfeldmap}
        T_p\mathbb{X}(R)\simeq\Hom_{\W(k)[F]}(M,\Acr(R)).
    \end{equation}
   This isomorphism is natural in $\mathbb{X}$ and $R$. 
\end{thm}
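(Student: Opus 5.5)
We will derive the statement from Theorem~\ref{scholzeweinstein} by computing both sides as homomorphism sets of Dieudonné modules over $\Acr(R)$. First, since $T_p\mathbb{X}=\sHom_k(\Qp/\Zp,\mathbb{X})$, we have
\begin{equation*}
T_p\mathbb{X}(R)=\Hom_R\big((\Qp/\Zp)_R,\mathbb{X}_R\big).
\end{equation*}
Because $R$ is $f$-quasiregular semiperfect (hence quasiregular semiperfect in the sense of Scholze--Weinstein), Theorem~\ref{scholzeweinstein} identifies this set with the morphisms of contravariant Dieudonné modules $\mathbb{D}^\vee(\mathbb{X}_R)\to\mathbb{D}^\vee((\Qp/\Zp)_R)$ over $\Acr(R)$, compatible with $F$ and $V$.

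Next we identify both Dieudonné modules. The contravariant Dieudonné crystal is compatible with base change, and evaluating on the PD-thickening $\Acr(R)\onto{}R$ of the $k$-algebra $R$ gives
\begin{equation*}
\mathbb{D}^\vee(\mathbb{X}_R)\simeq M\otimes_{\W(k)}\Acr(R),
\end{equation*}
using the structure map $\W(k)\to\Acr(R)$, with Frobenius $F\otimes\varphi$ and Verschiebung defined correspondingly. As recalled above, $\mathbb{D}^\vee((\Qp/\Zp)_R)=\Acr(R)e$ with $F(e)=e$. An $\Acr(R)$-linear map $M\otimes_{\W(k)}\Acr(R)\to\Acr(R)e$ is the same as a $\W(k)$-linear map $f:M\to\Acr(R)$. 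Compatibility with $F$ reads $f(Fm)=\varphi(f(m))$, where $\varphi$ is the Frobenius of $\Acr(R)$. Since $\Acr(R)$ is $p$-torsion free (for $f$-quasiregular semiperfect $R$ this is visible from \eqref{calcoloacris}) and $FV=p$, compatibility with $V$ follows automatically from compatibility with $F$. So the Hom set is $\Hom_{\W(k)[F]}(M,\Acr(R))$, with $\Acr(R)$ regarded as a $\W(k)[F]$-module via $\varphi$. This gives \eqref{drinfeldmap}.

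Naturality in $R$ follows because both the Dieudonné functor and the construction $R\mapsto\Acr(R)$ are functorial and commute with base change. Naturality in $\mathbb{X}$ is immediate from functoriality of $\mathbb{D}^\vee$ and of postcomposition. The isomorphism is canonical because no choices were made.

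We expect the main obstacle to be bookkeeping rather than substance. One point is checking that $f$-quasiregular semiperfect rings satisfy the hypotheses of Scholze--Weinstein's Theorem~A; they do, since a quotient of a perfect ring by a regular sequence is quasisyntomic. The other is matching the conventions for $F$ versus $V$, and covariant versus contravariant modules, so that the semilinearity condition comes out exactly as $\W(k)[F]$-linearity. Alternatively, following \cite[Theorem 3.3.2]{drinfeldacris}, one could reduce by naturality to the universal case $R=C_\mathbb{X}$ and use classical Dieudonné theory. We prefer the route above.
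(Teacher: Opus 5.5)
Your proposal is correct and follows the paper's proof essentially verbatim. Both arguments apply Scholze--Weinstein full faithfulness to $T_p\mathbb{X}(R)=\Hom_R(\Qp/\Zp,\mathbb{X}_R)$, identify $\mathbb{D}^\vee(\mathbb{X}_R)\simeq M\otimes_{\W(k)}\Acr(R)$ and $\mathbb{D}^\vee(\Qp/\Zp)\simeq\Acr(R)$ with $F(1)=1$, and reduce $F$-equivariant $\Acr(R)$-linear maps to $\W(k)[F]$-linear maps $M\to\Acr(R)$. Your extra check that compatibility with $V$ is automatic (from $FV=VF=p$ and $p$-torsion-freeness of $\Acr(R)$) fills in a detail the paper leaves implicit. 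One small point: an $f$-quasiregular semiperfect ring is a quotient of a perfect ring by a regular sequence, and this directly satisfies the hypothesis of Scholze--Weinstein's Theorem~A, so your appeal to quasisyntomicity is unnecessary.
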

\noindent{\it Proof.} We have
\begin{equation*}
    T_p\mathbb{X}(R)=\Hom_R(\Qp/\Zp,\mathbb{X}_R)\simeq\Hom_R(\mathbb{D}^\vee(\mathbb{X}_R),\mathbb{D}^\vee(\Qp/\Zp))
\end{equation*}
where the second isomorphism uses Theorem \ref{scholzeweinstein}. Now, $\mathbb{D}^\vee(\Qp/\Zp)\simeq\Acr(R)$ with $F(1)=1$, whereas $\mathbb{D}^\vee(\mathbb{X}_R)\simeq M\otimes_{\W(k)}\Acr(R)$. It follows that
\begin{align*}
    \Hom_R(\mathbb{D}^\vee(\mathbb{X}_R),\mathbb{D}^\vee(\Qp/\Zp))&\simeq\Hom_{\Acr(R)[F]}(M\otimes_{\W(k)}\Acr(R),\Acr(R))\\
    &\simeq\Hom_{\W(k)[F]}(M,\Acr(R)).
\end{align*}
$\hfill\square$

After inverting $p$, we also find a natural isomorphism
\begin{equation}\label{mappacoverpdiv}
    \widetilde{\mathbb{X}}(R)\simeq\Hom_{K[F]}(M[p^{-1}],\Acr(R)[p^{-1}]),
\end{equation}
which will be useful later on.

\begin{lem}\label{deftheta}
    There is a canonical map
    \begin{equation*}
        \theta_M:M\to\Acr(C_\mathbb{X})
    \end{equation*}
    of $\W(k)[F]$-modules, which is natural in $M$ and injective.
\end{lem}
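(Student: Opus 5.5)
The map $\theta_M$ will be the universal point. Since $T_p\mathbb{X}=\Spec(C_\mathbb{X})$, the identity of $C_\mathbb{X}$ gives a tautological element $\mathrm{id}\in T_p\mathbb{X}(C_\mathbb{X})$. The ring $C_\mathbb{X}$ is $f$-quasiregular semiperfect (Example \ref{deftpuc}), so Theorem \ref{functorTp} applies with $R=C_\mathbb{X}$. I will define $\theta_M$ as the image of $\mathrm{id}$ under the isomorphism \eqref{drinfeldmap}, namely
\begin{equation*}
    T_p\mathbb{X}(C_\mathbb{X})\simeq\Hom_{\W(k)[F]}(M,\Acr(C_\mathbb{X})).
\end{equation*}
By construction this is a $\W(k)[F]$-linear map $M\to\Acr(C_\mathbb{X})$.

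Naturality comes from the Yoneda formalism. Let $f:\mathbb{X}\to\mathbb{X}'$ be a morphism, with induced map $M'\to M$ and induced ring map $C_{\mathbb{X}'}\to C_\mathbb{X}$. The universal point of $\mathbb{X}$ maps under $f$ to the pullback of the universal point of $\mathbb{X}'$ along $C_{\mathbb{X}'}\to C_\mathbb{X}$. Naturality of \eqref{drinfeldmap} in both $\mathbb{X}$ and $R$ then shows that the square relating $\theta_M$, $\theta_{M'}$, the map $M'\to M$ and $\Acr(C_{\mathbb{X}'})\to\Acr(C_\mathbb{X})$ commutes. As a byproduct, Yoneda also recovers every point $T_p\mathbb{X}(R)$ as $\Acr(\phi)\circ\theta_M$ for $\phi:C_\mathbb{X}\to R$.

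Injectivity is the step I expect to be the main obstacle. Suppose $m\in M$ is nonzero with $\theta_M(m)=0$. Then by the byproduct above, every $\W(k)[F]$-homomorphism $M\to\Acr(R)$, for every $f$-quasiregular semiperfect $R$, kills $m$. So it suffices to find a single such $R$ and a homomorphism $M\to\Acr(R)$ that is injective. I would first reduce to simple cases. By naturality, a short exact sequence $0\to\mathbb{X}^\circ\to\mathbb{X}\to\mathbb{X}^{\text{ét}}\to0$ (split over a perfect field $k$) reduces injectivity to the connected and étale parts. Passing to $\Fpbar$ is harmless, since $\Acr$ commutes with this flat base change. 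In the étale case $M=\W(k)$-span of $F$-fixed vectors, and $C_\mathbb{X}$ is a product of copies of $k$, so $\theta_M$ is visibly the inclusion $M\hookrightarrow \prod\W(k)$.

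For the connected case I would use the explicit descriptions \eqref{calcoloperf} and \eqref{calcoloacris}. The ring $\Acr(C_\mathbb{X})$ is $p$-torsion free, so it suffices to prove injectivity after inverting $p$. A nonzero kernel would be an $F$-stable sub-Dieudonné module $N\subseteq M[p^{-1}]$, corresponding to a quotient isogeny class $\mathbb{X}\to\mathbb{X}''$ with $M''[p^{-1}]=N$. Every point of $T_p\mathbb{X}$ would then have zero image in $\widetilde{\mathbb{X}''}$ by \eqref{mappacoverpdiv}. This is impossible when $\mathbb{X}''\neq0$, because $\widetilde{\mathbb{X}}\to\widetilde{\mathbb{X}''}$ is surjective, and the points of $\widetilde{\mathbb{X}}$ over $C_\mathbb{X}$-type rings come from $p^{-n}$ times the universal point. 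So the heart of the argument is to make this surjectivity claim rigorous via Example \ref{deftpuc}.
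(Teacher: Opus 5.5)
Your proposal is correct and is essentially the paper's proof: $\theta_M$ is the image of the universal point $\mathrm{id}\in T_p\mathbb{X}(C_\mathbb{X})$ under \eqref{drinfeldmap}. Injectivity comes from realizing the kernel as the Dieudonné module of a quotient $\mathbb{X}\to\mathbb{X}''$; by naturality the universal point then maps to $0$, i.e.\ $T_p\mathbb{X}\to T_p\mathbb{X}''$ is the zero morphism, which is impossible for a surjection unless $\mathbb{X}''=0$ (the paper phrases this as $\theta_N=0$ via the injection $\Acr(C_\mathbb{H})\hookrightarrow\Acr(C_\mathbb{X})$).

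The reductions to connected and étale parts and to $\Fpbar$, and the detour through universal covers, are unnecessary because this argument is uniform in $\mathbb{X}$. The surjectivity you flag as the main obstacle is only needed for $T_p\mathbb{X}\to T_p\mathbb{X}''$, where it is the same standard fact the paper takes for granted (lift level by level using the exact sequences $0\to\mathbb{X}'[p^n]\to\mathbb{X}[p^n]\to\mathbb{X}''[p^n]\to0$).
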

\noindent{\it Proof.} The image of the identity morphism $\textnormal{id}_{T_p\mathbb{X}}\in T_p\mathbb{X}(C_\mathbb{X})$ via \eqref{drinfeldmap} defines a map from $M$ to $\Acr(C_\mathbb{X})$ which we call $\theta_M$. By design it is a canonical map of $\W(k)[F]$-modules which is natural in $M$. Moreover, $\theta_M=0$ if and only if $M=0$, because it corresponds to the universal identity morphism of $T_p(C_{\mathbb{X}})$. 

Let $N=\ker(\theta_M)$. It is a Dieudonné sub-module of $M$, so it corresponds to a $p$-divisible group $\mathbb{H}$ such that $T_p\mathbb{X}\to T_p\mathbb{H}$ is faithfully flat. It follows that $C_{\mathbb{H}}\to C_{\mathbb{X}}$ is injective, and by the description \eqref{calcoloacris} the same holds for $\Acr(C_{\mathbb{H}})\to\Acr(C_{\mathbb{X}})$. By naturality it follows that $\theta_N=0$. Therefore $N=0$, which means that $\theta_M$ is injective.
\hfill$\square$\vspace{.5em}

It turns out that the image of $\theta_M$ coincides with the set of primitive elements of the formal Hopf algebra $\Acr(C_\mathbb{X})$. The elements in the image of $\theta_M$ are thus a sorts of ``universal logarithms'' for $\mathbb{X}$, compare \cite[V]{katzjacobi}. The easy inclusion is proved in the next lemma, and it is the only one we need in this paper. For completeness, we include a proof of the other inclusion relying on auxiliary results.

\begin{lem}\label{primitivi}
    Let $S(M)$ denote the set of primitive elements of the formal Hopf algebra $\Acr(C_\mathbb{X})$. In other words, $S(M)$ is the kernel of
    \[\begin{tikzcd}
	{\Acr(C_\mathbb{X})} & {\Acr(C_\mathbb{X}\otimes_kC_\mathbb{X})=\Acr(C_\mathbb{X})\widehat{\otimes}_{\W(k)}\Acr(C_\mathbb{X})}
	\arrow["\begin{array}{c} p_1^*+p_2^*-\Delta \end{array}", from=1-1, to=1-2].
    \end{tikzcd}\]
    Then the image of $\theta_M$ is contained in $S(M)$.
\end{lem}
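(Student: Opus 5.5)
We will show that every $m\in M$ satisfies $\Delta(\theta_M(m))=p_1^*\theta_M(m)+p_2^*\theta_M(m)$. The strategy is to translate this identity into a statement about points of $T_p\mathbb{X}$ via Theorem \ref{functorTp}, where it becomes the additivity of \eqref{drinfeldmap} in the group variable.

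The setup is as follows. Put $R=C_\mathbb{X}\otimes_kC_\mathbb{X}$, which represents $T_p\mathbb{X}\times T_p\mathbb{X}$. It is again $f$-quasiregular semiperfect: by Example \ref{deftpuc} it has the form $k[x_i^{\pmeninf},y_i^{\pmeninf}]/(x_i,y_i)$. So Theorem \ref{functorTp} applies to $R$, and $\Acr(R)=\Acr(C_\mathbb{X})\widehat{\otimes}_{\W(k)}\Acr(C_\mathbb{X})$ as in the statement. The ring $R$ carries three points of $T_p\mathbb{X}(R)$: the projections $\pi_1,\pi_2$, which correspond to the maps $p_1^*,p_2^*:C_\mathbb{X}\to R$, and their sum $\pi_1+\pi_2$, which corresponds to $\Delta:C_\mathbb{X}\to R$.

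The key steps are these.

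\emph{Step 1.} We use naturality of \eqref{drinfeldmap} in $R$. For any $f$-quasiregular semiperfect $R$ and any $u\in T_p\mathbb{X}(R)$, viewed as a map $u^*:C_\mathbb{X}\to R$, the $\W(k)[F]$-linear map attached to $u$ is $\Acr(u^*)\circ\theta_M$. This holds because $u$ is the pullback of the universal point $\mathrm{id}_{T_p\mathbb{X}}$ along $u^*$. Consequently the maps attached to $\pi_1,\pi_2,\pi_1+\pi_2$ are $p_1^*\circ\theta_M$, $p_2^*\circ\theta_M$ and $\Delta\circ\theta_M$ respectively.

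\emph{Step 2.} We check that \eqref{drinfeldmap} is a group homomorphism. In the proof of Theorem \ref{functorTp} it is the composite of the identification $T_p\mathbb{X}(R)=\Hom_R(\Qp/\Zp,\mathbb{X}_R)$, which respects the group structures, with the Dieudonné functor on Hom-groups. The Dieudonné functor is additive, since it is a functor between additive categories: the sum of two morphisms is obtained by composing with the diagonal and the addition map, and $\mathbb{D}^\vee$ preserves finite products. The remaining identifications of Hom-modules in that proof are evidently additive.

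\emph{Step 3.} Combining the two steps gives $\Delta\circ\theta_M=p_1^*\circ\theta_M+p_2^*\circ\theta_M$. This says exactly that the image of $\theta_M$ lies in the kernel $S(M)$.

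The main obstacle is making Step 2 precise, namely checking that the group law on $\Hom(\Qp/\Zp,\mathbb{X}_R)$ corresponds to addition of Dieudonné maps. One must be careful here: the sum of morphisms in the category of $p$-divisible groups over $R$ has to match the sum in the category of Dieudonné crystals over $R$, and this compatibility should be verified against the explicit description of Dieudonné crystals as $\Acr(R)$-modules. The other point needing attention is that the comultiplication on $\Acr(C_\mathbb{X})$ is the one induced functorially by $\Delta$ on $C_\mathbb{X}$, together with the identification of $\Acr(C_\mathbb{X}\otimes_k C_\mathbb{X})$ with the completed tensor product. Since the statement takes that identification as given, I would use it as stated.
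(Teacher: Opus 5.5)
Your proof is correct and is essentially the paper's argument. The paper applies naturality of $\theta$ in $\mathbb{X}$ to the sum map $\mathbb{X}\times\mathbb{X}\to\mathbb{X}$, which Dieudonné theory turns into the diagonal $M\to M\oplus M$, and to the two projections, then concludes by linearity of $\theta_{M\oplus M}$. You instead evaluate $\theta_M$ at the three points $\pi_1,\pi_2,\pi_1+\pi_2$ using naturality in $R$ and additivity of \eqref{drinfeldmap}. By Yoneda this is the same bookkeeping, and both versions rest on the same input, namely additivity of the Dieudonné functor.
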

\noindent{\it Proof.} Via Dieudonné theory, the sum homomorphism $+:\mathbb{X}\times \mathbb{X}\to \mathbb{X}$ corresponds to the diagonal inclusion $M\to M\oplus M$. Therefore, by naturality of $\theta$ the diagram
\[\begin{tikzcd}
	{M} & {M\oplus M} \\
	{\Acr(C_\mathbb{X})} & {\Acr(C_\mathbb{X}\otimes_k C_\mathbb{X})}
	\arrow[from=1-1, to=1-2]
	\arrow["{\theta(M)}"', from=1-1, to=2-1]
	\arrow["{\theta_{M\oplus M}}", from=1-2, to=2-2]
	\arrow["{\Delta}", from=2-1, to=2-2]
\end{tikzcd}\]
commutes. To see the inclusion $\im(\theta_M)\subseteq S(M)$ it is enough to check that $\theta_{M\oplus M}$ maps  $(x,y)\in M\oplus M$ to $\theta_M(x)\otimes1+1\otimes\theta_M(y)$, which follows once more from the naturality of $\theta$.
$\qed$

\begin{prop}\label{isomsm}
    The image of $\theta_M$ is equal to $S(M)$.
\end{prop}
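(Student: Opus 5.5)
The inclusion $\im(\theta_M)\subseteq S(M)$ is Lemma \ref{primitivi}, so the plan is to prove the reverse inclusion. The idea is to read a primitive element as a homomorphism from $T_p\mathbb{X}$ to a suitable additive group, and then compare that homomorphism with Dieudonné theory.

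First I identify $S(M)$ with a Hom-set. An element $f\in\Acr(C_\mathbb{X})$ is the same as a natural transformation $T_p\mathbb{X}(R)\to\Acr(R)$ on $f$-quasiregular semiperfect $k$-algebras $R$: given $u:C_\mathbb{X}\to R$, apply $\Acr(u)$ to $f$. Conversely, by Yoneda such a transformation is determined by the image of the universal point. The condition $p_1^*f+p_2^*f=\Delta f$ says exactly that this transformation is additive. For this I use that $C_\mathbb{X}\otimes_kC_\mathbb{X}$ is again $f$-quasiregular semiperfect, and that $\Acr$ of the tensor product is the completed tensor product appearing in Lemma \ref{primitivi}. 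So $S(M)$ is the set of natural group homomorphisms $T_p\mathbb{X}(-)\to\Acr(-)$ on this category.

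Next I check that such a homomorphism $\phi$ is compatible with Frobenius, since the target of $\theta_M$ consists of $\W(k)[F]$-maps. This is the point where I am least sure the plan works as stated. Naturality with respect to the absolute Frobenius of $R$ shows that $\phi$ commutes with $\varphi$ on $\Acr$. But the Frobenius on $T_p\mathbb{X}(R)$ induced by the Frobenius of $R$ does not obviously correspond to $F$ on $M$, and this needs a check.

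The key step is to write down $\phi$ from Dieudonné theory. For $m\in M$, the evaluation map $x\mapsto x(m)$, using Theorem \ref{functorTp}, is the homomorphism attached to $\theta_M(m)$. I then want to show that every natural homomorphism $T_p\mathbb{X}\to\Acr$ is of this form, by realising $\Acr$ itself as a Dieudonné-type object. Concretely, I would write $\Acr(R)=\varprojlim_n\Acr(R)/p^n$ and identify natural additive maps into $\Acr/p^n$ with homomorphisms of sheaves $T_p\mathbb{X}\to\mathbb{G}_a^{\sharp}$-type objects, that is, with crystalline $\mathrm{Ext}$/$\Hom$ computations. By \cite[4]{scholzeweinstein}, such homomorphisms are given by elements of the Dieudonné module: $\Hom(T_p\mathbb{X},\Acr)\simeq\Hom(\mathbb{D}(\mathbb{G}_a\text{-like}),\dots)$ reduces to $\mathbb{D}^\vee(\mathbb{X})(k)=M$.

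A cleaner fallback avoids Dieudonné theory at this step and uses coordinates instead. Write $C_\mathbb{X}=k[x_i^{\pmeninf}]/(x_i)$, so that $\Acr(C_\mathbb{X})$ is explicit by \eqref{calcoloacris}, and compute primitive elements after inverting $p$ through the logarithm of the formal group of $\mathbb{X}$. Primitive elements of a formal group Hopf algebra over a $\Q$-algebra are the linear combinations of the logarithm coordinates, which span an $n$-dimensional space over $\W(k)[p^{-1}]$ (with Frobenius twists). The expected count of this span matches $\theta_M(M)[p^{-1}]$. Integrality then follows because $\theta_M(M)$ is saturated, which I would check via the injectivity argument of Lemma \ref{deftheta}.

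The main obstacle is this last identification: proving that every primitive element arises from $M$. The cleanest route is through Scholze--Weinstein's full faithfulness, Theorem \ref{scholzeweinstein}, applied to extensions of $\mathbb{X}$ by $\mu_{p^\infty}$ (or $\Qp/\Zp$). I would try that route first.
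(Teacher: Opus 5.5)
Your reformulation of $S(M)$ as natural additive maps $T_p\mathbb{X}(-)\to\Acr(-)$ on $f$-quasiregular semiperfect algebras is fine; it is the same compatibility of \eqref{drinfeldmap} with products that the paper uses. Your Frobenius worry is beside the point, since the statement is an equality of subsets of $\Acr(C_\mathbb{X})$. But the reverse inclusion $S(M)\subseteq\im(\theta_M)$, which is the whole content of the proposition, is not proved: each of your three routes stops exactly where the work begins.

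The functor $R\mapsto\Acr(R)$ is not the Dieudonn\'e crystal or Tate module of a $p$-divisible group. So Theorem \ref{scholzeweinstein} gives no direct handle on additive maps $T_p\mathbb{X}\to\Acr$, and your ``Dieudonn\'e-type object'' and ``extensions by $\mu_{p^\infty}$'' steps are never actually formulated.

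The coordinate fallback fails as stated, for three reasons. First, $\Acr(C_\mathbb{X})[p^{-1}]$ is not the Hopf algebra $K[\![t_1,\dots,t_n]\!]$ of an $n$-dimensional formal group over $K$: by \eqref{calcoloacris} it is built from $W(k[\![x_1^{\pmeninf},\dots,x_n^{\pmeninf}]\!])$ and divided powers of the $[x_i]$. Hence the classical description of primitive elements via logarithms does not apply. Second, the count is off: $M$ has rank equal to the height of $\mathbb{X}$, not the number $n$ of coordinates of $C_\mathbb{X}$, which is the dimension for connected $\mathbb{X}$. Third, saturation of $\theta_M(M)$ inside $S(M)$ does not follow from the argument of Lemma \ref{deftheta}, which only shows $\ker\theta_M=0$.

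Two ingredients are missing. The first is a Yoneda argument over test objects. For any $p$-divisible group $\mathbb{Y}$ with Dieudonn\'e module $N$, the group homomorphisms inside $T_p\mathbb{Y}(C_\mathbb{X})\simeq\Hom_{\W(k)[F]}(N,\Acr(C_\mathbb{X}))$ are exactly the maps with image in $S(M)$. Comparing with classical Dieudonn\'e theory shows that composing with $\theta_M$ gives a bijection $\Hom_{\W(k)[F]}(N,M)\to\Hom_{\W(k)[F]}(N,S(M))$ for every $N$. So it suffices to know that $S(M)$ is itself a Dieudonn\'e module, and then one takes $N=S(M)$.

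Stability of $S(M)$ under $F$ and $V$ is easy. The second, essential ingredient is that $S(M)$ is finitely generated over $\W(k)$. The paper gets this by identifying $S(M)$ with $\Hcris{1}(B(T_p\mathbb{X}))$, via descent along $*\to B(T_p\mathbb{X})$, whose \v{C}ech terms are the completed tensor powers of $\Acr(C_\mathbb{X})$. It then computes $\Hcris{1}(B(T_p\mathbb{X}))\simeq T_p\varinjlim_n\Hcris{2}(B\mathbb{X}[p^n])\simeq M$ using Mondal's results on crystalline cohomology of classifying stacks of finite group schemes. Without such a finiteness statement, or at least an upper bound on $\dim_K S(M)[p^{-1}]$, your dimension-count strategy has nothing to compare $\theta_M(M)[p^{-1}]$ against.
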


\noindent{\it Proof.}
Let $\mathbb{Y}$ be another $p$-divisible groups with contravariant Dieudonné module $N$. Inside the group 
$\Hom(T_p\mathbb{X},T_p\mathbb{Y})=T_p\mathbb{Y}(C_{\mathbb{X}})$ lies the subgroup $\Hom_{\text{grp}}(T_p\mathbb{X},T_p\mathbb{Y})\simeq\Hom_{\text{grp}}(\mathbb{X},\mathbb{Y})$. Using the compatibility of \eqref{drinfeldmap} with products, one sees that this subgroup corresponds to the subset of all $f\in\Hom_{\W(k)[F]}(N,\Acr(C_{\mathbb{X}}))$ such that $f(N)$ lands in the set of primitive elements of $\Acr(C_{\mathbb{X}})$. We thus have the following commutative diagram:
\[\begin{tikzcd}
	{} & {\Hom(T_p\mathbb{X},T_p\mathbb{Y})} & {\Hom_{\W(k)[F]}(N,\Acr(C_{\mathbb{X}}))} & \\
	{} & {\Hom_{\text{grp}}(T_p\mathbb{X},T_p\mathbb{Y})} & {\Hom_{\W(k)[F]}(N,S(M))} & {\Hom_{\W(k)[F]}(N,M)}
	\arrow["\cong", from=1-2, to=1-3]
	\arrow[hook, from=2-2, to=1-2]
	\arrow["\cong", from=2-2, to=2-3]
	\arrow[hook, from=2-3, to=1-3]
	\arrow["i"', hook', from=2-4, to=2-3]
\end{tikzcd}\]
Where the map $i$ is induced by $\theta_M:M\to S(M)$. Unraveling the definitions, one checks that the map
\begin{equation*}
    {\Hom_{\W(k)[F]}(N,M)}\to{\Hom_{\text{grp}}(T_p\mathbb{X},T_p\mathbb{Y})}
\end{equation*}
obtained by composing $i$ with the reciprocal of the lower horizontal isomorphism is the classical isomorphism coming from Dieudonné theory. Therefore $i$ is an isomorphism. If we show that $S(M)$ is a Dieudonné module, then Yoneda's lemma will imply that $\theta_M$ is an isomorphism.

Firstly note that if $f:T_p\mathbb{X}\to T_p\mathbb{X}_1$ is a group homomorphism, then $f^*(S(M))\subseteq S(M)$. Thus the Verschiebung $V$ acts on $S(M)$, and one has $pS=F(V^*(S(M)))\subseteq F(S(M))$. It remains to prove that $S(M)$ is a $\W(k)$-module of finite type, and to do this we use some results from \cite{mondalstacks}, from which we also borrow conventions on stacks and derived crystalline cohomology.

Since $T_p\mathbb{X}$ is quasiregular semiperfect, the crystalline cohomology of $T_p\mathbb{X}$ and of its powers is concentrated in degree $0$, and by the crystalline Künneth formula it isomorphic to completed tensor powers of $\Acr(C_\mathbb{X})$. Using descent for crystalline cohomology along the quasisyntomic cover $*\to B(T_p\mathbb{X})$, one finds that the crystalline cohomology of $B(T_p\mathbb{X})$ is computed by a complex whose terms are precisely these completed tensor powers, and an explicit computation shows that $\Hcris{1}(B(T_p\mathbb{X}))$ is identified with the kernel of the map defining $S(M)$. Therefore $\Hcris{1}(B(T_p\mathbb{X}))$ is isomorphic to $S(M)$, and it is enough to show that the former is a finitely generated $\W(k)$-module.

On the other hand, since crystalline cohomology coincides with derived crystalline cohomology for quasisyntomic schemes \cite[8.17]{bhattlurie'}, which commutes with colimits, there is a canonical isomorphism
\begin{equation}\label{cristp}
    R\Gamma_\textnormal{cris}(B(T_p\mathbb{X}))=R\Gamma_\textnormal{cris}(\varprojlim B\mathbb{X}[p^n])\simeq\big(\varinjlim R\Gamma_\textnormal{cris}(B\mathbb{X}[p^n])\big)^\wedge_p.
\end{equation}
By \cite[2.1.8]{mondalstacks} the group $\Hcris{1}(B\mathbb{X}[p^n])$ vanishes, so \eqref{cristp} reduces to
\begin{equation*}
    \Hcris{1}(B(T_p\mathbb{X}))\simeq T_p\big(\varinjlim\Hcris{2}(B\mathbb{X}[p^n])\big).
\end{equation*}
By \cite[2.2.3]{mondalstacks}, if $G$ is a finite commutative $p$-group scheme over $k$ then $\Hcris{2}(BG)$ is canonically isomorphic to the contravariant Dieudonné module of $G$. Therefore $\Hcris{1}(T_p\mathbb{X})$ is isomorphic to $M$, and we are done. 
$\qed$

\begin{rem}
    In the proof of Proposition \ref{isomsm} we deduce an isomorphism $M\simeq S(M)$ from the results of \cite{mondalstacks}. It is in fact canonical, but it does not seem straightforward to show that it is given by $\theta_M$. It is also possible to show the finiteness of $S(M)$ using only the weaker finiteness results for $\Hcris{2}(BG)$ proved in \cite{mondalstacks}.
\end{rem}

\begin{ex}
    When $\mathbb{X}=\varinjlim_n\mu_{p^n}$, we have $C_\mathbb{X}\simeq k[x^{\pmeninf}]/(x-1)$ with comultiplication given by 
    $$\Delta(x^{p^{-i}})=x^{p^{-i}}\otimes x^{p^{-i}}.$$
    By \cite[2.5.4]{drinfeldacris} we can identify $\Acr(C_\mathbb{X})$ with the ring of 
    power series of the form
    \begin{equation*}
        \sum_{\alpha\in\Z[p^{-1}]_{\ge0}}b_\alpha\frac{[x_0]^{\alpha}}{(\alpha!)_p},
    \end{equation*}
    where the $b_\alpha\in\W(k)$ converge $p$-adically to $0$ (here $[x_0]$ is the Teichmuller lift of $x_0=(x^{1/p},x^{1/p^2},\dots)\in C_{\mathbb{X}}^\flat$). It easy to check that $\log([x_0])$ is a primitive element in $\Acr(C_\mathbb{X})$ and that $F(\log([x_0]))=p\log([x_0])$, so it is a generator of the Dieudonné module of $\mathbb{X}$ inside $\Acr(C_\mathbb{X})$.
\end{ex}

\begin{lem}\label{iniettività}
    The map of rings
    \begin{equation}\label{mapparg}
    \theta_M^a:\Sym_{\W(k)}(M)\to\Acr(C_\mathbb{X})
    \end{equation}
    induced by $\theta_M$ is injective.
\end{lem}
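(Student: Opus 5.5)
The plan is to reduce to a statement over $K$ and then run the classical argument that linearly independent primitive elements of a Hopf algebra in characteristic $0$ are algebraically independent. Since $\mathbb{X}$ is a $p$-divisible group over the perfect field $k$, the module $M$ is free of finite rank $h$ over $\W(k)$. Hence $\Sym_{\W(k)}(M)\simeq\W(k)[e_1,\dots,e_h]$ for a basis $(e_i)$ of $M$, and this ring is $p$-torsion free. It therefore suffices to prove that $\theta_M^a\otimes K:\Sym_K(M[p^{-1}])\to A:=\Acr(C_\mathbb{X})[p^{-1}]$ is injective. Write $t_i=\theta_M(e_i)$. By Lemma \ref{deftheta} the $t_i$ are $\W(k)$-linearly independent, hence $K$-linearly independent in $A$, using that $\Acr(C_\mathbb{X})$ is $p$-torsion free by \eqref{calcoloacris}. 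By Lemma \ref{primitivi} each $t_i$ is primitive: $\Delta(t_i)=t_i\otimes1+1\otimes t_i$ in $A\widehat{\otimes}A$.

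We argue by contradiction, taking a nonzero $f\in K[X_1,\dots,X_h]$ of minimal total degree $d\ge1$ with $f(t)=0$. Degree $0$ is impossible because $1\neq0$ in $A$. Since $\Delta$ is a ring homomorphism and the $t_i$ are primitive, Taylor's formula in characteristic $0$ gives
\begin{equation*}
0=\Delta(f(t))-f(t)\otimes1-1\otimes f(t)=\sum_{0<|\alpha|<d}\frac{(\partial^\alpha f)(t)}{\alpha!}\otimes t^\alpha ,
\end{equation*}
because the terms with $|\alpha|=0$ and $|\alpha|=d$ are exactly $f(t)\otimes1$ and $1\otimes f(t)$. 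By minimality of $d$, the monomials $t^\alpha$ with $|\alpha|<d$ are $K$-linearly independent in $A$. So we want to "extract the coefficient" of $t^{\alpha}$ for each $\alpha$ with $|\alpha|=1$, i.e. of each $t_i$. Concretely, we choose a continuous $\W(k)$-linear functional $\lambda_i:\Acr(C_\mathbb{X})\to\W(k)$, rescaled by a power of $p$, with $\lambda_i(t^\alpha)=\delta_{\alpha,e_i}$ for $0<|\alpha|<d$. Applying $\mathrm{id}\widehat{\otimes}\lambda_i$ then yields $(\partial_i f)(t)=0$. Since $\partial_if$ has degree $<d$, minimality forces $\partial_if=0$ for all $i$. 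In characteristic $0$ this makes $f$ constant, contradicting $d\ge1$.

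The main obstacle is the existence of the functionals $\lambda_i$, together with the compatibility of $\mathrm{id}\widehat{\otimes}\lambda_i$ with the completed tensor product $\Acr(C_\mathbb{X})\widehat{\otimes}_{\W(k)}\Acr(C_\mathbb{X})=\Acr(C_\mathbb{X}\otimes_kC_\mathbb{X})$. My first attempt is to use the explicit description \eqref{calcoloacris}: $\Acr(C_\mathbb{X})$ is the $p$-completion of a free $\W(k)$-module with basis the divided-power monomials $[a]^{\beta}/(\beta!)_p$, $\beta\in\Z[p^{-1}]_{\ge0}^n$. It is therefore topologically free, and any finite family of elements spans a finite-rank submodule. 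I then need a saturation step: the $\W(k)$-span of the finitely many $t^\alpha$ with $|\alpha|<d$ becomes, after multiplying by a suitable power of $p$, a direct summand of a finite free quotient obtained by projecting onto finitely many coordinates modulo $p^N$. From such a summand, coordinate functionals extend continuously to all of $\Acr(C_\mathbb{X})$. Since the completed tensor product of topologically free modules is topologically free on the products of the basis elements, $\mathrm{id}\widehat{\otimes}\lambda_i$ is well defined and acts as expected on elementary tensors. If this saturation argument turns out to be delicate, my fallback is to avoid functionals altogether: I would instead specialise along $R$-points of $T_p\mathbb{X}\times T_p\mathbb{X}$, using Theorem \ref{functorTp} to realise the second tensor factor concretely.
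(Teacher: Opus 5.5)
Your route differs from the paper's, and your key Taylor identity is wrong; the gap is easy to close.

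\textbf{Comparison with the paper.} The paper notes that $\theta_M^a$ is a map of Hopf algebras (Lemma \ref{primitivi}), so its kernel is a Hopf ideal. After inverting $p$ this ideal defines a closed subgroup of a vector group. In characteristic $0$ such a subgroup is a vector subgroup, so the kernel is generated by linear forms, and these vanish because $\theta_M$ is injective. You instead prove directly that $K$-linearly independent primitive elements are algebraically independent. This is more elementary, and it makes you handle $\Acr(C_\mathbb{X})\widehat{\otimes}_{\W(k)}\Acr(C_\mathbb{X})$ explicitly. The paper's claim that $\ker\theta_M^a$ is a Hopf ideal tacitly needs the same kind of separation argument.

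\textbf{The error.} Your displayed identity fails as soon as $f$ is not homogeneous. The terms with $|\alpha|=d$ sum to $1\otimes f_d(t)$, where $f_d$ is the top-degree component of $f$, not to $1\otimes f(t)$. For instance, for $f=X_1^2+X_1$ the left-hand side is $2t_1\otimes t_1$, while your right-hand side is $(2t_1+1)\otimes t_1$. This is not cosmetic, because as written the argument never uses that the $t_i$ are linearly independent. For $d=1$ your sum is empty and no functional with $\lambda_i(t_i)=1$ is available, yet you conclude $(\partial_if)(t)=0$. The same reasoning would ``rule out'' the relation $X_1-X_2$ for the dependent primitive family $t_1=t_2$.

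\textbf{The repair.} Write $f=\sum_\alpha c_\alpha X^\alpha$ and use $f(t)=0$. The correct identity is
\begin{equation*}
0=\sum_{0<|\alpha|<d}\frac{(\partial^\alpha f)(t)}{\alpha!}\otimes t^\alpha+1\otimes f_d(t),\qquad f_d(t)=-\sum_{|\alpha|<d}c_\alpha t^\alpha .
\end{equation*}
If $d\ge2$, choose $\lambda_i$ with $\lambda_i(t^\alpha)=\delta_{\alpha,e_i}$ for all $|\alpha|<d$, including $\alpha=0$. Applying $\mathrm{id}\widehat{\otimes}\lambda_i$ gives $(\partial_if)(t)=c_{e_i}$. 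By minimality $\partial_if=c_{e_i}$ for every $i$, so $\deg f\le1$, a contradiction. Hence $d=1$ and $f=c_0+\sum_ic_{e_i}X_i$. Applying the counit, which kills primitive elements, gives $c_0=0$. Then $\sum_ic_{e_i}t_i=0$ contradicts Lemma \ref{deftheta}.

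\textbf{The functionals.} These are easier than you fear, and no saturation step is needed. Let $(b_j)$ be a topological basis of $\Acr(C_\mathbb{X})$. Finitely many $K$-linearly independent elements remain independent on some finite set of coordinates $b_j^*$. Otherwise a nonzero linear form on $K^r$ would kill every coordinate column, which would give a linear relation among the elements. So suitable $K$-linear combinations of finitely many $b_j^*$ serve as the $\lambda_i$. The map $\mathrm{id}\widehat{\otimes}(p^m\lambda_i)$ is then defined by reducing modulo $p^N$ and passing to the limit.
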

\noindent{\it Proof.} If we endow $\Sym_{\W(k)}(M)$ with the Hopf algebra structure of a vector group, Lemma \ref{primitivi} tells us that $\theta_M^a$ is a map of Hopf algebras. Therefore $I:=\ker(\theta_M^a)$ is a Hopf ideal of $\Sym_{\W(k)}(M)$.

We claim that $I=0$. It is enough to check this after inverting $p$, and then $I_{K}\subseteq\Sym_{K}(M[p^{-1}])$ is a Hopf ideal defining a closed subgroup of $\mathbb{G}_{a,K}^r$. In characteristic zero, any subgroup of a vector group is a vector subgroup. Therefore $I_{\Qp}$ is generated by a $K$-submodule of $M[p^{-1}]$. Since $\theta_M$ is injective, we have $I_{K}=0$, thus also $I=0$.
$\qed$

\begin{rem}\label{trivialintrem}
    If $\mathbb{X}$ is connected the subalgebra $W(C_\mathbb{X}^\flat)\sub\Acr(C_\mathbb{X})$ is a formal Hopf subalgebra which has trivial intersection with the image of $\theta^a_M$ (\textit{i.e.} the intersection is equal to $\W(k)$). Indeed, $F$ is an isomorphism on $W(C_\mathbb{X}^\flat)$, whereas $F$ is topologically nilpotent on any element of $\Sym_{W(k)}(M)$ with no constant term.
\end{rem}

\subsection{The action of $\Aut_\frak{b}^\circ(\mathbb{Y})^\Delta$ on $\mathbb{D}(\mathbb{Y})$}

Taking $\mathbb{X}=\mathcal{H}_\mathbb{Y}^\circ$, we can connect formula \eqref{drinfeldmap} with the isomorphism $T_p\mathcal{H}_\mathbb{Y}^\circ\simeq\End^\circ(\mathbb{Y})$.

\begin{lem}\label{taut1}
    Let $R$ be $f$-quasiregular semiperfect. The composition of isomorphisms
    \begin{equation*}
        \End^\circ(\mathbb{Y})(R)\simeq T_p\mathcal{H}_\mathbb{Y}^\circ(R)\simeq\Hom_{\W(k)[F]}((\frak{a}_\mathbb{Y}^+)^\vee,\Acr(R))
    \end{equation*}
    is given by the following procedure. A map $f\in\End^\circ(\mathbb{Y})(R)$ corresponds by Dieudonné theory to an $\Acr(R)[F]$-linear map
    \begin{equation*}
        \mathbb{D}(\mathbb{Y})\otimes\Acr(R)\to\mathbb{D}(\mathbb{Y})\otimes_{\W(k)}\Acr(R),
    \end{equation*}
    thus to a $\W(k)[F]$-linear map
    \begin{equation*}
        \mathbb{D}(\mathbb{Y})\to\mathbb{D}(\mathbb{Y})\otimes_{\W(k)}\Acr(R),
    \end{equation*}
    and finally to a $\W(k)[F]$-linear map
    \begin{equation*}
        \textnormal{End}(\mathbb{D}(\mathbb{Y}))^\vee\to\Acr(R).
    \end{equation*}
    The latter factors through the quotient $\textnormal{End}(\mathbb{D}(\mathbb{Y}))^\vee\onto{}(\frak{a}_\mathbb{Y}^+)^\vee$ to give the desired map.
\end{lem}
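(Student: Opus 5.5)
The plan is to unwind both isomorphisms in the composition and check that they agree, using the naturality of Theorem \ref{functorTp} and the compatibility of Dieudonné theory with internal Homs. First I will identify the two sides concretely. An element of $T_p\mathcal{H}_\mathbb{Y}^\circ(R)$ is a homomorphism $\Qp/\Zp\to\mathcal{H}^\circ_{\mathbb{Y},R}$. By Theorem \ref{functorTp} it corresponds to the $\W(k)[F]$-map obtained by applying the contravariant Dieudonné functor over $R$ (fully faithful by Theorem \ref{scholzeweinstein}). That functor turns the homomorphism into $\mathbb{D}^\vee(\mathcal{H}^\circ_\mathbb{Y})\otimes\Acr(R)\to\Acr(R)$, and restricting to $\mathbb{D}^\vee(\mathcal{H}^\circ_\mathbb{Y})=(\frak{a}_\mathbb{Y}^+)^\vee$ gives the element of $\Hom_{\W(k)[F]}((\frak{a}_\mathbb{Y}^+)^\vee,\Acr(R))$. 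Dualizing, this is the same as an $F$-compatible element of $\frak{a}_\mathbb{Y}^+\otimes\Acr(R)$, that is, an $\Acr(R)[F]$-map $\Acr(R)=\mathbb{D}((\Qp/\Zp)_R)\to\mathbb{D}(\mathcal{H}^\circ_\mathbb{Y})_R$ in covariant terms.

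The key step is to use the description of $\mathcal{H}_\mathbb{Y}$ as an internal Hom. By construction, $T_p\mathcal{H}_\mathbb{Y}^\circ=\End^\circ(\mathbb{Y})$ via the evaluation pairing $\mathcal{H}_\mathbb{Y}\times\mathbb{Y}\to\mathbb{Y}$ (more precisely, via the compatible action maps $\mathcal{H}_\mathbb{Y}[p^n]\times\mathbb{Y}[p^n]\to\mathbb{Y}[p^n]$). By bilinear Dieudonné theory \cite[4.2.12]{marcopol}, this pairing corresponds to the map $\frak{a}_\mathbb{Y}^+\otimes\mathbb{D}(\mathbb{Y})\to\mathbb{D}(\mathbb{Y})$, namely evaluation through the inclusion $\frak{a}^+_\mathbb{Y}\subseteq\textnormal{End}(\mathbb{D}(\mathbb{Y}))$ of Proposition \ref{dieudonneinternalhom}. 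Given $f\in\End^\circ(\mathbb{Y})(R)$ with corresponding point $t:\Qp/\Zp\to\mathcal{H}^\circ_\mathbb{Y}$, the endomorphism $f$ is the composite $\mathbb{Y}_R\simeq\Qp/\Zp\otimes\mathbb{Y}_R\xrightarrow{t\otimes\id}\mathcal{H}\otimes\mathbb{Y}\to\mathbb{Y}$. Here $\otimes$ should be read as bilinear maps, applied on $p^n$-torsion levels. Applying $\mathbb{D}$ and using the compatibility with bilinear maps over the qrsp ring $R$, $\mathbb{D}(f)$ becomes $d\mapsto \mathrm{ev}(\mathbb{D}(t)(1)\otimes d)$. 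So $\mathbb{D}(f)$ is exactly the element $\mathbb{D}(t)(1)\in\frak{a}^+_\mathbb{Y}\otimes\Acr(R)\subseteq\textnormal{End}(\mathbb{D}(\mathbb{Y}))\otimes\Acr(R)$.

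Dualizing then gives the stated procedure. The map $\textnormal{End}(\mathbb{D}(\mathbb{Y}))^\vee\to\Acr(R)$ is pairing against $\mathbb{D}(f)$, and it factors through $(\frak{a}^+_\mathbb{Y})^\vee$ precisely because $\mathbb{D}(f)$ lies in $\frak{a}^+_\mathbb{Y}\otimes\Acr(R)$. The induced map is the one from Theorem \ref{functorTp}. $F$-linearity is automatic, since all the maps involved are Dieudonné-module maps.

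The main obstacle is justifying that bilinear Dieudonné theory applies over $f$-quasiregular semiperfect $R$, and not only over $k$. The bilinear map is defined over $k$ and merely base-changed, so I would argue via crystals: $\mathbb{D}$ is a crystal and its formation commutes with base change to $\Acr(R)$. To avoid defining $\mathbb{D}$ of a tensor product, I would phrase the pairing levelwise as a family of maps $\mathbb{Y}_R\to\mathbb{Y}_R$ depending functorially on $t$, and check the identity after composing with Dieudonné functoriality. A fallback is to reduce to the universal case $R=C_{\mathcal{H}^\circ_\mathbb{Y}}$ and $t=\id$, where everything is determined by $\theta$ from Lemma \ref{deftheta}.
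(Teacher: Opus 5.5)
Your proposal is correct and is essentially the paper's argument. The paper just declares the lemma clear from Proposition~\ref{dieudonneinternalhom}, since the embedding $\mathfrak{a}^+_\mathbb{Y}\subseteq\textnormal{End}(\mathbb{D}(\mathbb{Y}))$ is the Dieudonné incarnation of the tautological action of $T_p\mathcal{H}_\mathbb{Y}=\End(\mathbb{Y})$ on $\mathbb{Y}$, and you unpack exactly this via the action pairing and bilinear Dieudonné theory. The compatibility you flag (that this identification persists over $f$-quasiregular semiperfect $R$, for points $t$ not defined over $k$) is the technical content the paper leaves implicit; note that reducing to the universal point $R=C_{\mathcal{H}^\circ_\mathbb{Y}}$ only relocates this check, since you must still compare $\mathbb{D}$ of the universal endomorphism with $\theta$.
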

\noindent{\it Proof.}
This is clear from the description of $\mathfrak{a}_\mathbb{Y}^+$ as a submodule of $\textnormal{End}(\mathbb{D}(\mathbb{Y}))$ given in Proposition \ref{dieudonneinternalhom}.
$\qed$

\begin{rem}
    An endomorphism $f\in\End(\mathbb{Y})(R)$ corresponds to a unique $\W(k)[F]$-linear map $\textnormal{End}(\mathbb{D}(\mathbb{Y}))^\vee\to\Acr(R)$, which is the same as an $F$-invariant element of $\textnormal{End}(\mathbb{D}(\mathbb{Y}))\otimes_{\W(k)}\Acr(R)$. Choosing a basis of $\mathbb{D}(\mathbb{Y})$, the latter can be seen as an $\Acr(R)$-valued matrix. Lemma \ref{taut1} says that $f\in\End(\mathbb{Y})(R)$ belongs to $\End^\circ(\mathbb{Y})(R)$ if and only if its matrix belongs to the matrix subspace $\frak{a}_\mathbb{Y}^+\otimes_{\W(k)}\Acr(R)\subseteq\textnormal{End}(\mathbb{D}(\mathbb{Y}))\otimes_{\W(k)}\Acr(R)$.
\end{rem}

In the next Lemma we describe the exponential map of \eqref{exp} in terms of matrices. Since the exponential is defined only on the universal cover $\widetilde{\mathcal{H}}_\mathbb{Y}^\circ(R)$, we have to invert $p$ in the statement. Transposing the above discussion on $T_p\mathcal{H}^\circ_\mathbb{Y}(R)$ to $\widetilde{\mathcal{H}}_\mathbb{Y}^\circ(R)$ is easy and left to the reader.

\begin{lem}\label{taut2}
    Let $f\in\widetilde{\mathcal{H}}^\circ_\mathbb{Y}(R)=\End^\circ(\mathbb{Y})[p^{-1}]$, and consider $\exp(f)\in\Aut^\circ_{\widetilde{\mathbb{Y}}}(R)$. The matrix of $\exp(f)$ is equal to the exponential of the matrix of $f$. 
\end{lem}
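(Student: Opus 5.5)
The plan is to reduce the statement to functoriality of Dieudonné theory with respect to composition. First I reduce to the case $f\in\End^\circ(\mathbb{Y})(R)$ with $R$ an $f$-quasiregular semiperfect $k$-algebra. A general element of $\widetilde{\mathcal{H}}^\circ_\mathbb{Y}(R)$ has the form $p^{-N}g$ with $g\in\End^\circ(\mathbb{Y})(R)$. The matrix assignment of Lemma \ref{taut1}, after inverting $p$, sends $p^{-N}g$ to $p^{-N}$ times the matrix of $g$. Both sides of the identity are therefore expressed through the same $K$-linear data, and we may work in $\textnormal{End}(\mathbb{D}(\mathbb{Y}))\otimes_{\W(k)}\Acr(R)[p^{-1}]$ throughout.

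The key observation is that the matrix assignment
\begin{equation*}
    \mu:\End(\mathbb{Y})(R)\to \textnormal{End}(\mathbb{D}(\mathbb{Y}))\otimes_{\W(k)}\Acr(R),
\end{equation*}
which sends $f$ to the $F$-invariant element attached to $\mathbb{D}(f)$, is a ring homomorphism. It is additive and multiplicative because the Dieudonné functor is an additive functor: it sends composition of endomorphisms of $\mathbb{Y}_R$ to composition of $\Acr(R)$-linear endomorphisms of $\mathbb{D}(\mathbb{Y})\otimes\Acr(R)$, and in the chosen basis this is matrix multiplication. Consequently $\mu[p^{-1}]$ is a $\Qp$-algebra homomorphism, and $\mu(f^n/n!)=\mu(f)^n/n!$ for every $n$.

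It remains to pass to the infinite sum, and I expect this convergence step to be the main obstacle. One has to check two things. First, the series $\sum_n f^n/n!$ defining $\exp(f)$ in \eqref{exp} converges in a topology for which $\mu$ is continuous. Second, the matrix series $\sum_n\mu(f)^n/n!$ converges in $\textnormal{End}(\mathbb{D}(\mathbb{Y}))\otimes\Acr(R)[p^{-1}]$ to the matrix of $\exp(f)$.

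For the first point I would use that $f$ is topologically nilpotent, since it is $0$ on $R_{\textnormal{red}}$. By Lemma \ref{taut1}, $\mu(f)$ lies in $\frak{a}^+_\mathbb{Y}\otimes\Acr(R)$, and $\frak{a}_\mathbb{Y}$ is nilpotent with slopes in $[-1,0[$ (Proposition \ref{dieudonneinternalhom}). The expected argument is that $\mu(f)$ is topologically nilpotent for the $p$-adic topology on $\Acr(R)$. Concretely, some power of $F$ applied to $\mu(f)$ becomes divisible by a high power of $p$, and $F$-invariance of $\mu(f^n)$ then forces the coefficients into the divided-power ideal. This gives convergence of $\sum\mu(f)^n/n!$ in $\Acr(R)[p^{-1}]$ with bounded denominators.

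For compatibility with limits, I would use that $\mu$ is an isomorphism onto its image (Theorem \ref{scholzeweinstein}) and is continuous, since it is defined via the universal element $\theta$ of Lemma \ref{deftheta}. Alternatively, I would avoid topology by arguing with the truncated sums: by \cite[4.1.9]{marcopol}, $\exp$ is characterized through $\log$ and the BCH formula. It would then suffice to verify that $\mu(\exp(f))$ and $\exp(\mu(f))$ have the same logarithm. This reduces to the same kind of finite algebraic identity, with the convergence of $\log$ on unipotent-type matrices handled as above.
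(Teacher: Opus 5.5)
Your key step, that the matrix assignment $\mu$ is a ring homomorphism because the Dieudonn\'e functor is additive and compatible with composition, is exactly the paper's proof. The convergence issue you single out as the main obstacle does not actually arise, and your topological sketch of it (divided powers forced by $F$-invariance, continuity of $\mu$ via $\theta$) is not justified as written and can simply be dropped: since $\frak{a}_\mathbb{Y}$ has slopes in $[-1,0[$, its elements are strictly block-triangular with respect to the slope decomposition of $\mathbb{D}(\mathbb{Y})[p^{-1}]$, so $\frak{a}_\mathbb{Y}$ is nilpotent as an associative algebra; hence $\mu(f)^r=0$ for $r$ the number of distinct slopes, $f^r=0$ by injectivity of $\mu$ (Theorem \ref{scholzeweinstein}), and both exponentials are finite sums to which the ring-homomorphism property applies directly.
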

\noindent{\it Proof.}
It is enough to show that for $f,g\in\End(\widetilde{\mathbb{Y}})(R)$ the matrix of $f+g$ is the sum of the matrices of $f$ and $g$, and that the matrix of $f\circ g$ is the product of the matrices of $f$ and $g$. This follows from functoriality of the Dieudonné functor. 
$\qed$

Using this point of view, we can also describe the value of $\Aut^\circ(\mathbb{Y})$ on a quasiregular semiperfect $k$-algebra $R$. Just as before, an automorphism $f\in\Aut(\mathbb{Y})(R)$ corresponds to a $\W(k)[F]$-linear map
\begin{equation*}
    \textnormal{End}(\mathbb{D}(\mathbb{Y}))^\vee\to\Acr(R).
\end{equation*}
The condition that $f$ is an automorphism translates into the fact that the map
\begin{equation}\label{nosso}
    \Sym_{\W(k)}(\textnormal{End}(\mathbb{D}(\mathbb{Y}))^\vee)\to\Acr(R)
\end{equation}
factors through the localization $R_\mathbb{Y}^+$ (see \eqref{anellideigruppi} for notation).
\begin{lem}\label{funtoreaut}
    The automorphism $f$ lies in $\Aut^\circ(\mathbb{Y})(R)$ if and only if the map \eqref{nosso} factors through the surjection $R^+_\mathbb{Y}\onto{}R^+_{\frak{a}_\mathbb{Y}}$. In particular, we have isomorphisms
    \begin{equation*}
        \Aut^\circ_\mathbb{Y}(R)\simeq\Hom_{\W(k)[F]\textnormal{-alg}}(R^+_{\frak{a}_\mathbb{Y}},\Acr(R))
    \end{equation*}
    which are natural in $R$.
\end{lem}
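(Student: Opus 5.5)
We need to show that an automorphism $f\in\Aut(\mathbb{Y})(R)$ is identity-on-$R_{\textnormal{red}}$ (i.e. lies in $\Aut^\circ$) if and only if its $\Acr(R)$-valued matrix $g\in\GL(\mathbb{D}(\mathbb{Y})\otimes\Acr(R))$ corresponds to an $\Acr(R)$-point of $U(\frak{a}_\mathbb{Y})_{\textnormal{int}}=\Spec(R^+_{\frak{a}_\mathbb{Y}})$. The natural route is via $\log$ and $\exp$, reducing to the linear statement of Lemma \ref{taut1}, together with flatness of $U(\frak{a}_\mathbb{Y})_{\textnormal{int}}$ and $p$-torsion-freeness of $\Acr(R)$ for $f$-quasiregular semiperfect $R$ (visible from \eqref{calcoloacris}).

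For the forward direction, take $f\in\Aut^\circ(\mathbb{Y})(R)$. By the isomorphism $\log:\Aut^\circ(\widetilde{\mathbb{Y}})\to\widetilde{\mathcal{H}}^\circ_\mathbb{Y}$, we have $f=\exp(h)$ with $h\in\widetilde{\mathcal{H}}^\circ_\mathbb{Y}(R)=\End^\circ(\mathbb{Y})(R)[p^{-1}]$. By Lemma \ref{taut1}, transposed to the universal cover, the matrix of $h$ lies in $\frak{a}_\mathbb{Y}\otimes_{K}\Acr(R)[p^{-1}]$. By Lemma \ref{taut2}, the matrix of $f$ is the matrix exponential of this nilpotent matrix, since $\frak{a}_\mathbb{Y}$ is a nilpotent Lie algebra of nilpotent endomorphisms (slopes in $[-1,0[$). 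Via \eqref{anelloub}, the map $R_\mathbb{Y}\to\Acr(R)[p^{-1}]$ therefore factors through $R_{\frak{a}_\mathbb{Y}}$. Since the map $R^+_\mathbb{Y}\to\Acr(R)$ lands in $\Acr(R)$, which is $p$-torsion free, its kernel contains $\ker(R^+_\mathbb{Y}\to R_{\frak{a}_\mathbb{Y}})$, and this is exactly the kernel of $R^+_\mathbb{Y}\onto{}R^+_{\frak{a}_\mathbb{Y}}$ by definition of the schematic closure. Hence \eqref{nosso} factors through $R^+_{\frak{a}_\mathbb{Y}}$.

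For the converse, suppose the matrix $g$ of $f$ is an $\Acr(R)$-point of $U(\frak{a}_\mathbb{Y})_{\textnormal{int}}$. After inverting $p$, it is a point of $U(\frak{a}_\mathbb{Y})$, so by \eqref{anelloub} we can write $g=\exp(X)$ with $X\in\frak{a}_\mathbb{Y}\otimes\Acr(R)[p^{-1}]$, namely $X=\log g$, a finite sum by unipotence. Since $g$ is $F$-invariant and $\log$ is a polynomial, $X$ is $F$-invariant. By the $\widetilde{\mathcal{H}}$ version of Lemma \ref{taut1}, $X$ is the matrix of some $h\in\widetilde{\mathcal{H}}^\circ_\mathbb{Y}(R)$. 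Then $\exp(h)\in\Aut^\circ(\widetilde{\mathbb{Y}})(R)$ has matrix $g$ by Lemma \ref{taut2}. Full faithfulness (Theorem \ref{scholzeweinstein}, after inverting $p$) gives $\exp(h)=f$. So $f$ is an automorphism of $\mathbb{Y}$ lying in $\Aut^\circ(\widetilde{\mathbb{Y}})$, i.e. it is the identity on $R_{\textnormal{red}}$, hence $f\in\Aut^\circ(\mathbb{Y})(R)$.

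The main obstacle is the converse, specifically the identification $\Aut^\circ(\widetilde{\mathbb{Y}})(R)\cap\Aut(\mathbb{Y})(R)=\Aut^\circ(\mathbb{Y})(R)$ and the extension of Lemma \ref{taut1} to rational coefficients ($\widetilde{\mathcal{H}}$ via \eqref{mappacoverpdiv}). I would verify the former directly from the definitions, as both sides are quasi-isogenies that restrict to the identity on $R_{\textnormal{red}}$. The displayed natural isomorphism then follows by combining the equivalence with $\Aut(\mathbb{Y})(R)\simeq\Hom_{\W(k)[F]\textnormal{-alg}}(R^+_\mathbb{Y},\Acr(R))$. That isomorphism comes from Theorem \ref{scholzeweinstein} in the same way as Theorem \ref{functorTp}: $F$-equivariant algebra maps correspond to $F$-invariant invertible matrices.
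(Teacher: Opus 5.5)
Your proof is correct, but it takes a different route from the paper's. The paper argues additively. It uses that $f\in\Aut^\circ(\mathbb{Y})(R)$ if and only if $f=\id_{\mathbb{Y}_R}+g$ with $g\in\End^\circ(\mathbb{Y})(R)$. It also uses that, since $\frak{a}_\mathbb{Y}$ is closed under composition, $U(\frak{a}_\mathbb{Y})(S)=\{1+X : X\in\frak{a}_\mathbb{Y}\otimes_K S\}$ for every $K$-algebra $S$. The lemma is then read off directly from the linear statement of Lemma \ref{taut1}, without touching $\exp$, $\log$ or the universal cover.

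You instead work multiplicatively. You pass through $\exp\colon\widetilde{\mathcal{H}}^\circ_\mathbb{Y}\simeq\Aut^\circ(\widetilde{\mathbb{Y}})$ and Lemma \ref{taut2}, the rational form of Lemma \ref{taut1} coming from \eqref{mappacoverpdiv}, full faithfulness after inverting $p$, and the definitional identity $\Aut^\circ(\widetilde{\mathbb{Y}})(R)\cap\Aut(\mathbb{Y})(R)=\Aut^\circ(\mathbb{Y})(R)$.

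The paper's argument is much shorter. Yours needs more input but has two merits. First, it makes explicit the passage between $R_{\frak{a}_\mathbb{Y}}$ and its integral model $R^+_{\frak{a}_\mathbb{Y}}$ (via $p$-torsion-freeness of $\Acr(R)$), which the paper's two-line proof leaves implicit. Second, it never uses that $\frak{a}_\mathbb{Y}$ is closed under composition, only the exponential description \eqref{anelloub}. So essentially the same argument characterizes $\Aut^\circ_\frak{b}(\mathbb{Y})(R)$ through $\Acr(R)$-points of $U(\frak{b})_{\textnormal{int}}$ for an arbitrary Dieudonn\'e--Lie subalgebra $\frak{b}$, where $U(\frak{b})$ is no longer of the form $1+\frak{b}$. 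This closely parallels what the paper itself does later in the proof of Lemma \ref{lemmafinalehopf}, via diagram \eqref{diagrammafinale}.
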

\noindent{\it Proof.}
Note that $f\in\Aut^\circ(\mathbb{Y})(R)$ if and only if $f=\id_{\mathbb{Y}_R}+g$ with $g\in\End^\circ(\mathbb{Y})(R)$. On the other hand, since $\frak{a}_\mathbb{Y}$ is closed under composition, we have $U(\frak{a}_\mathbb{Y})(S)=\{1+f| f\in\frak{a}_\mathbb{Y}\otimes_K S\}$ for all $K$-algebras $S$. Therefore the statement follows from Lemma \ref{taut1}.
$\qed$

Now we come to the main result of the subsection: let $\frak{b}\subseteq\frak{a}_\mathbb{Y}$ be a Dieudonné-Lie subalgebra. The natural action of $\Aut^\circ_\frak{b}(\mathbb{Y})=\Spec(C_\frak{b})$ on $\mathbb{Y}$ gives rise, via Dieudonné theory, to a \textit{comodule} map $\mathbb{D}(\mathbb{Y})\to\mathbb{D}(\mathbb{Y})\otimes_{\W(k)}\Acr(C_\frak{b})$. This map is Frobenius-equivariant and becomes the identity after composing with the augmentation map $\Acr(C_\frak{b})\to \W(k)$. Arguing as above, this corresponds to a morphism of (formal) Hopf algebras $R_\mathbb{Y}^+\to\Acr(C_\frak{b})$ which factors through $R_\mathbb{Y}^+\onto{}R_{\frak{a}_\mathbb{Y}}^+$, and we thus get a map of (formal) Hopf algebras $\sigma_\frak{b}:R^+_{\frak{a}_\mathbb{Y}}\to\Acr(C_\frak{b})$.

\begin{lem}\label{lemmafinalehopf}
    The map $\sigma_\frak{b}$ factors through the natural surjection $R_{\frak{a}_\mathbb{Y}}^+\onto{}{}R_\frak{b}^+$. The resulting morphism of Hopf algebras
    \begin{equation*}
        \rho_\frak{b}^*:R_\frak{b}^+\to\Acr(C_\frak{b}),
    \end{equation*}
    is injective.
\end{lem}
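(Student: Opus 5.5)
The plan is to understand $\sigma_\frak{b}$ through its restriction to the ``Lie algebra'' part, using exponential coordinates together with Lemmas \ref{taut1}, \ref{taut2} and \ref{iniettività}.

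First, the factorization. By definition $\sigma_\frak{b}$ is the point of $\Aut^\circ(\mathbb{Y})(C_\frak{b})$ given by the tautological inclusion $\Aut^\circ_\frak{b}(\mathbb{Y})\subseteq\Aut^\circ(\mathbb{Y})$, read through Lemma \ref{funtoreaut}. Using Lemma \ref{autbeqrsp}, $C_\frak{b}$ is $f$-quasiregular semiperfect, so that lemma applies. I would check the factorization after inverting $p$, where $R^+_{\frak{a}_\mathbb{Y}}[p^{-1}]=R_{\frak{a}_\mathbb{Y}}\to R_\frak{b}$. Since $R_\frak{b}^+$ is the image of $R^+_\mathbb{Y}$ in $R_\frak{b}$, and $\Acr(C_\frak{b})$ is $p$-torsion free (by the explicit description \eqref{calcoloacris}), factorization over $K$ implies factorization integrally.

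To obtain the factorization over $K$, pass to the universal cover. The point $\mathrm{id}\in\Aut^\circ_\frak{b}(\mathbb{Y})(C_\frak{b})$ lies in $\Aut^\circ_\frak{b}(\widetilde{\mathbb{Y}})(C_\frak{b})=\exp(\widetilde{\mathcal{H}}^\circ_{\mathbb{Y},\frak{b}}(C_\frak{b}))$, so it equals $\exp(f)$ with $f\in\widetilde{\mathcal{H}}^\circ_{\mathbb{Y},\frak{b}}(C_\frak{b})$. By \eqref{mappacoverpdiv} and the analogue of Lemma \ref{taut1} for the sub-$p$-divisible group attached to $\frak{b}$, the matrix of $f$ lies in $\frak{b}\otimes_K\Acr(C_\frak{b})[p^{-1}]$. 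By Lemma \ref{taut2}, the matrix of $\exp(f)$ is the matrix exponential of this element. Since $U(\frak{b})(S)=\exp(\frak{b}\otimes S)$ by \eqref{anelloub}, the point lies in $U(\frak{b})(\Acr(C_\frak{b})[p^{-1}])$. This gives the factorization of $\sigma_\frak{b}[p^{-1}]$ through $R_\frak{b}=\Sym_K(\frak{b}^\vee)$. One subtlety here: exponentials of nilpotent matrices with $\Acr[p^{-1}]$ entries are polynomial, so no convergence issue arises.

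Next, injectivity. Again it suffices to show that $\rho_\frak{b}^*[p^{-1}]:\Sym_K(\frak{b}^\vee)\to\Acr(C_\frak{b})[p^{-1}]$ is injective, because $R_\frak{b}^+\subseteq R_\frak{b}$. In exponential coordinates, $\rho_\frak{b}^*$ is the composite of $\Sym_K(\frak{b}^\vee)\to\Acr(C_\frak{b})[p^{-1}]$, sending a linear form $\ell$ to $\ell(f)$, with the identification $U(\frak{b})\simeq\Spec\Sym(\frak{b}^\vee)$ via $\exp$. The linear map $\frak{b}^\vee\to\Acr(C_\frak{b})[p^{-1}]$, $\ell\mapsto\ell(f)$, is exactly the ``universal logarithm'' $\theta$ for the group $\Aut^\circ_\frak{b}(\mathbb{Y})$. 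The idea is to compare it with $\theta_{M}$ for $\mathbb{X}=\mathcal{H}^\circ_{\mathbb{Y},\frak{b}}$ and $M=(\frak{b}^+)^\vee$, via $\log$. Concretely, use the exact sequence $0\to pT_p\mathcal{H}^\circ_{\mathbb{Y},\frak{b}}\to\Aut^\circ_\frak{b}(\mathbb{Y})\to H'\to 0$ with $H'$ infinitesimal. The restriction of $\rho^*_\frak{b}$ to $C_\mathbb{X}$, i.e. the pullback along $\exp$, sends $\ell\mapsto p\,\theta_M(\ell)$, up to the scaling by $p$. Then Lemma \ref{iniettività} gives injectivity of $\Sym(M)[p^{-1}]\to\Acr(C_\mathbb{X})[p^{-1}]$.

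Hence the composite $\Sym_K(\frak{b}^\vee)\to\Acr(C_\frak{b})[p^{-1}]\to\Acr(C_\mathbb{X})[p^{-1}]$ is injective, and so $\rho_\frak{b}^*$ is injective.

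I expect the main obstacle to be making this last compatibility precise: that pulling back the universal point of $\Aut^\circ_\frak{b}(\mathbb{Y})$ along $\exp:pT_p\mathcal{H}^\circ_{\mathbb{Y},\frak{b}}\to\Aut^\circ_\frak{b}(\mathbb{Y})$ yields, in the $\Acr$-matrix description, exactly $\exp(p\,\theta_M)$. This should follow from naturality of \eqref{drinfeldmap} in $R$ together with Lemma \ref{taut2}. One must also check that $\Acr(C_\mathbb{X})$ makes sense, i.e. that $C_\mathbb{X}$ is $f$-quasiregular semiperfect; by Example \ref{deftpuc} it is.
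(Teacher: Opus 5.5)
Your proposal is correct and follows essentially the paper's route: after inverting $p$, you use Lemma~\ref{taut2} to identify $\sigma_{\mathfrak{b}}$ in exponential coordinates with the universal logarithm $\theta_M$ of $\mathcal{H}^\circ_{\mathbb{Y},\mathfrak{b}}$, and you conclude from the injectivity of $\theta^a_M$. The differences are minor. You derive the factorization directly instead of reading it off the kernel computation, and you do not reduce to $\mathfrak{b}=\mathfrak{a}_\mathbb{Y}$. You also compare $\Acr(C_{\mathfrak{b}})$ with $\Acr(C_{\mathcal{H}^\circ_{\mathbb{Y},\mathfrak{b}}})$ through the explicit map $g\mapsto\exp(pg)$, which needs only the composite to be injective, whereas the paper asserts an isomorphism of the two rings after inverting $p$.
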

\noindent{\it Proof.} It is enough to show this after inverting $p$. Write $\exp^*:R_{\frak{a}_\mathbb{Y}}\to\Sym_K(\frak{b}^\vee)$ for the map induced by the exponential map for $U(\frak{b})$ (see \eqref{anelloub}). The exponential $\exp:\widetilde{\mathcal{H}}_{\mathbb{Y},\frak{b}}^\circ\simeq\Aut_\frak{b}^\circ(\widetilde{\mathbb{Y}})$ of \eqref{exp} induces an isomorphism $\Acr(C_{\mathcal{H}^\circ_{\mathbb{Y},\frak{b}}})[p^{-1}]\simeq \Acr(C_\frak{b})[p^{-1}]$, and we claim that the following diagram commutes:
\begin{equation}\label{diagrammafinale}
\begin{tikzcd}
	{\Sym_K(\frak{b}^\vee)} & {\Acr(C_{\mathcal{H}^\circ_{\mathbb{Y},\frak{b}}})[p^{-1}]} \\
	{R_{\frak{a}_\mathbb{Y}}} & {\Acr(C_\frak{b})[p^{-1}]}
	\arrow[from=1-1, to=1-2]
	\arrow["{\exp^*}", from=2-1, to=1-1]
	\arrow[from=2-1, to=2-2]
	\arrow["\simeq"', from=2-2, to=1-2]
\end{tikzcd}
\end{equation}
This gives the desired conclusion, because then by Lemma \ref{iniettività} the kernel of the bottom horizontal map is precisely $\ker(\exp^*)$, which is the kernel of the surjection $R_{\frak{a}_\mathbb{Y}}\onto{}R_\frak{b}$. Furthermore, since all maps considered here are natural, it is enough to prove this for $\frak{b}=\frak{a}_\mathbb{Y}$. 

Consider the following diagram for $R$ f-quasiregular semiperfect:
\[\begin{tikzcd}
	{\widetilde{\mathcal{H}}_\mathbb{Y}^\circ(R)} & {\Aut^\circ(\widetilde{\mathbb{Y}})(R)} \\
	{\Hom_{K[F]\textnormal{-alg}}(\Sym_{K}((\frak{a}_\mathbb{Y})^\vee),\Acr(R)[p^{-1}])} & {\Hom_{K[F]\textnormal{-alg}}(R_{\frak{a}_\mathbb{Y}},\Acr(R)[p^{-1}])}
	\arrow["\exp", from=1-1, to=1-2]
	\arrow["\simeq"', from=1-1, to=2-1]
	\arrow["\simeq"', from=1-2, to=2-2]
	\arrow["{\varepsilon'}", from=2-1, to=2-2]
\end{tikzcd}\]
The left-hand homomorphism comes from the identification
\begin{equation*}
    \Hom_{K[F]}((\frak{a}_\mathbb{Y})^\vee,\Acr(R)[p^{-1}])\simeq\Hom_{K[F]\textnormal{-alg}}(\Sym_{K}((\frak{a}_\mathbb{Y})^\vee),\Acr(R)[p^{-1}]),
\end{equation*}
the right-hand isomorphism is Lemma \ref{funtoreaut} and the map on the bottom is induced by matrix exponentiation $R_{\frak{a}_\mathbb{Y}}\simeq\Sym_K((\frak{a}_\mathbb{Y})^\vee)$ and is the same map as \eqref{anelloub}. By Lemma \ref{taut2} it is commutative. Taking $R=C_\frak{b}$ and tracking the map $\widetilde{\mathcal{H}}_\mathbb{Y}^\circ$ around the diagram, we see that \ref{diagrammafinale} is commutative. 
$\qed$

\begin{rem}\label{thetarapp} We can interpret $\rho_\frak{b}^*$ as a homomorphism of (formal) group schemes
\begin{equation*}
    \rho_\frak{b}:\Aut_\frak{b}^\circ(\mathbb{Y})^\Delta\to U(\frak{b})_\textnormal{int}
\end{equation*}
which corresponds to the canonical representation of $\Aut_\frak{b}^\circ(\mathbb{Y})^\Delta$ acting on $\mathbb{D}(\mathbb{Y})$ after composing with the closed immersion $U(\frak{b})_\textnormal{int}\subseteq\GL(\mathbb{D}(\mathbb{Y}))$. If $\frak{c}\subseteq\frak{b}$ is a Dieudonné-Lie subalgebra, there is a commutative diagram of (formal) Hopf algebras
\[\begin{tikzcd}
	{R_\frak{b}^+} & {\Acr(C_\frak{b})} \\
	{R_\frak{c}^+} & {\Acr(C_\frak{c})}
	\arrow["{\rho_\frak{b}^*}", from=1-1, to=1-2]
	\arrow[two heads, from=1-1, to=2-1]
	\arrow[from=1-2, to=2-2]
	\arrow["{\rho_\frak{c}^*}", from=2-1, to=2-2]
\end{tikzcd}\]
\end{rem}

\section{Monodromy and prismatization}\label{mainsec}

We sketched in the introduction how the formalism of prismatization can be used to compute $p$-adic monodromy groups. In this section we develop this idea in a special case, which is roughly speaking the quasisyntomic analogue of a Galois cover. The key example to keep in mind is the cover $\Aut^\circ_\frak{b}(\widetilde{\mathbb{Y}})^\alg\to V(\frak{b})^\alg$ of Setting \ref{setting}, which is a torsor for the quasisyntomic group scheme $\Aut^\circ_\frak{b}(\mathbb{Y})$.

Our constructions rely on the prismatization functor $Z\mapsto Z^\Delta$ of Bhatt--Lurie and Drinfeld, see e.g. $\cite{bhattlurie'}$ for a detailed account. For the purposes of this article, it is enough to know the following fundamental properties of the prismatization of a quasisyntomic scheme $Z$\footnote{See \cite[2.3.2]{marcopol} for the definition of quasisyntomic. Frobenius-smooth schemes and $f$-quasiregular semiperfect schemes are examples of quasisyntomic schemes.}. 
\begin{enumerate}
    \item The prismatization $Z^\Delta$ is a $p$-adic formal stack. If $Z=\Spec(R)$ and $R$ is $f$-quasiregular semiperfect, then $Z^\Delta\simeq\Spf(\Acr(R))$ (which is a $p$-adic formal scheme) \cite[6.1]{bhattlurie'}.
    \item The category $\textnormal{QCoh}(Z^\Delta)$ of quasicoherent sheaves on $Z^\Delta$ is equivalent to the category of quasi-coherent crystals on the (absolute) crystalline site of $Z$ \cite[8.15]{bhattlurie'}. Under this equivalence, the subcategory $\textnormal{Vect}(Z^\Delta)$ of vector bundles correspond to finite rank locally free coherent crystals (see e.g. \cite[4.7]{bhattscholzecrys}).
    \item If $Z_1\to Z_2$ is a quasisyntomic cover, then $Z_1^\Delta\to Z_2^\Delta$ is an fpqc cover \cite[7.5]{bhattlurie'}.
\end{enumerate}

\subsection{$p$-adic monodromy groups in a special situation}\label{sottomon}

In this subsection, we consider a $G$-torsor $\widetilde{Z}\to Z$ in the category of $k$-schemes, satisfying the following conditions:
\begin{itemize}
    \item The scheme $Z$ is affine, irreducible, Noetherian and Frobenius-smooth, with coordinate ring $A$ and field of constants $k$. We assume that it has a closed point $\eta:\Spec(k)\to Z$.
    \item The scheme $\widetilde{Z}$ is affine, of the form $\Spec(B)$ where $B$ is $f$-quasiregular semiperfect.
    \item The group scheme $G$ is affine, of the form $\Spec(C)$ where $C$ is $f$-quasiregular semiperfect.
\end{itemize}

As recalled above, we have $\widetilde{Z}^\Delta=\Spf(\Acr(B))$ and $G^\Delta=\Spf(\Acr(C))$, which are $p$-adic formal schemes over $\W(k)$. Moreover, $\Acr(C)$ carries a formal comultiplication induced by the comultiplication of $C$, making $G^\Delta$ into a $p$-adic formal group, and the action of $G$ on $\widetilde{Z}$ carries over to an action of $G^\Delta$ on $\widetilde{Z}^\Delta$, which makes $\widetilde{Z}^\Delta\to Z^\Delta$ into a $G^\Delta$-torsor \cite[6.3.6]{marcopol}. We write $\rho_B:\Acr(B)\to\Acr(B)\widehat{\otimes}_{\W(k)}\Acr(C)$ for the comodule map corresponding to the action of $G^\Delta$ on $\widetilde{Z}^\Delta$.

\begin{lem}\label{discesa}  
    The natural pullback functor
    \begin{equation*}
        \textnormal{QCoh}(Z^\Delta)\to\textnormal{QCoh}^{G^\Delta}(\widetilde{Z}^\Delta),
    \end{equation*}
    is an equivalence, with the category of $G^\Delta$-equivariant sheaves on $\widetilde{Z}^\Delta$ on the right-hand side. It restricts to an equivalence between vector bundles on $Z^\Delta$ and $G^\Delta$-equivariant vector bundles on $\widetilde{Z}^\Delta$.
\end{lem}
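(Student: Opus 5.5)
This is fpqc descent for quasi-coherent sheaves along the torsor $q:\widetilde{Z}^\Delta\to Z^\Delta$. The plan has three steps.

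\textbf{Step 1: $q$ is an fpqc cover.} The map $\widetilde{Z}\to Z$ is a $G$-torsor and $G$ is quasisyntomic (it is $f$-quasiregular semiperfect). Hence $\widetilde{Z}\to Z$ is a quasisyntomic cover: this can be checked after the faithfully flat base change trivializing the torsor, where it becomes the projection $G\times_k T\to T$, which is quasisyntomic. By property (3) of prismatization, $q$ is then an fpqc cover of $p$-adic formal stacks.

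\textbf{Step 2: the \v{C}ech nerve.} The torsor structure gives an isomorphism $\widetilde{Z}\times_Z\widetilde{Z}\simeq G\times\widetilde{Z}$, and similarly for higher fibre products. Prismatization commutes with these fibre products, because it is a limit-preserving functor on quasisyntomic schemes; alternatively, this is the content of \cite[6.3.6]{marcopol}, which says $\widetilde{Z}^\Delta\to Z^\Delta$ is a $G^\Delta$-torsor. So the \v{C}ech nerve of $q$ is identified with the bar construction of the action of $G^\Delta$ on $\widetilde{Z}^\Delta$, namely $(G^\Delta)^{n}\times\widetilde{Z}^\Delta$, with face maps given by the action, the multiplication and the projections. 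Concretely, these are the affine formal schemes $\Spf(\Acr(B)\widehat{\otimes}\Acr(C)^{\widehat{\otimes} n})$, where we use the Künneth identification $\Acr(B\otimes_k C^{\otimes n})\simeq\Acr(B)\widehat\otimes_{\W(k)}\Acr(C)^{\widehat\otimes n}$ for $f$-quasiregular semiperfect rings.

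\textbf{Step 3: descent.} Quasi-coherent sheaves on $p$-adic formal stacks satisfy fpqc descent; indeed this is how $\textnormal{QCoh}$ of a stack is defined, as a limit over affines. Therefore $\textnormal{QCoh}(Z^\Delta)$ is equivalent to the category of descent data along the \v{C}ech nerve. By Step 2, a descent datum is exactly a quasi-coherent sheaf on $\widetilde{Z}^\Delta$ with an isomorphism $a^*\mathcal{F}\simeq \mathrm{pr}^*\mathcal{F}$ on $G^\Delta\times\widetilde{Z}^\Delta$ satisfying the cocycle condition. This is precisely a $G^\Delta$-equivariant sheaf, which gives the first equivalence. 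For vector bundles, being a vector bundle (finite locally free) is fpqc local, so the equivalence restricts to vector bundles.

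\textbf{Main obstacle.} The delicate point is Step 2: justifying that prismatization turns $\widetilde{Z}\times_Z\widetilde{Z}\simeq G\times\widetilde{Z}$ into the corresponding product of formal stacks, and handling completed tensor products correctly. I would first try to cite the torsor statement \cite[6.3.6]{marcopol} directly, since a torsor is by definition trivialized along its own cover. Failing that, I would use that prismatization is a right Kan extension from quasiregular semiperfectoids, which commutes with fibre products.
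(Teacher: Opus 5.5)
Your argument is correct and is essentially the paper's: both show that $\widetilde{Z}^\Delta\to Z^\Delta$ is an fpqc $G^\Delta$-torsor, using property 3 together with the torsor statement 6.3.6 of D'Addezio--van Hoften, and then identify $\textnormal{QCoh}(Z^\Delta)$ with descent data, i.e.\ with $G^\Delta$-equivariant sheaves on $\widetilde{Z}^\Delta$. The only difference is that the paper works modulo $p^n$, where $\widetilde{Z}^\Delta_n$ and $G^\Delta_n$ are honest schemes and $Z^\Delta_n=[\widetilde{Z}^\Delta_n/G^\Delta_n]$, and then passes to the limit over $n$; this avoids the completed tensor products you handle directly, and your check that $\widetilde{Z}\to Z$ is a quasisyntomic cover is a detail the paper leaves implicit.
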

\noindent{\it Proof.} For a quasisyntomic $k$-scheme $Y$ denote by $Y_n^\Delta$ the base-change of $Y^\Delta$ to $\Spec(\W_n(k))$. The map $\widetilde{Z}^\Delta_n\to Z^\Delta_n$ is a $G^\Delta_n$-torsor, and an fpqc cover by property 3 above. Since $\widetilde{Z}^\Delta_n$ and $G^\Delta_n$ are actual schemes, we may write $Z^\Delta_n=[\widetilde{Z}^\Delta_n/G^\Delta_n]$. Therefore
\begin{equation}\label{qcohn}
    \textnormal{QCoh}(Z^\Delta_n)\simeq\textnormal{QCoh}([\widetilde{Z}^\Delta_n/G^\Delta_n])=\textnormal{QCoh}^{G_n^\Delta}(\widetilde{Z}_n^\Delta).
\end{equation}
By definition, if $A$ is a flat $p$-adically complete $\W(k)$-algebra, a quasicoherent sheaf on $\Spf(A)$ is a compatible system of quasicoherent sheaves on $\Spec(A/p^n)$. Hence taking the limit in \eqref{qcohn} we have the first equivalence (see also the remark after Theorem 2.2 in \cite{bhattscholzecrys}). The second equivalence is proved in the same way.
$\qed$

We apply this remark to a particularly nice class of $G^\Delta$-equivariant vector bundles on $\widetilde{Z}^\Delta$, given by the representations of $G^\Delta$. A representation of $G^\Delta$ in this context is a $G^\Delta$-equivariant vector bundle on $\Spf(\W(k))$, endowed with the trivial action. More concretely it is a finite rank free $\W(k)$-module $V$, with a comodule structure $V\to V\otimes_{\W(k)}\Acr(C)$. We can define a $\W(k)$-linear $\otimes$-functor
\begin{equation*}
    \mathcal{R}:\Rep(G^\Delta)\to\textnormal{Vect}^{G^\Delta}(\widetilde{Z}^\Delta),
\end{equation*}
as the pull-back along $\widetilde{Z}^\Delta\to\Spf(\W(k))$. We let $\mathcal{R}_{\Q}$ denote the composition
\begin{equation*}
    \mathcal{R}_{\Q}:\Rep(G^\Delta)\otimes\mathbb{Q}\to\textnormal{Vec}^{G^\Delta}(\widetilde{Z}^\Delta)\otimes\Q\simeq\textnormal{Vec}(Z^\Delta)\otimes\Q\to\textnormal{Isoc}(Z),
\end{equation*}
where the middle equivalence is given by Lemma \ref{discesa}, and the last map is the natural, fully faithful inclusion given by property 2 of the prismatisation. By construction it is a $K$-linear $\otimes$-functor $R_\mathbb{Q}$, and hence a functor of $K$-linear neutral Tannakian categories. Given a representation $V$, we denote $\mathcal{R}_{\Q}(V)$ the corresponding isocrystal.

The Tannakian group of $\langle V\rangle^\otimes$ can be described explicitly in terms of the comodule map $\rho:V\to V\otimes_{\W(k)}\Acr(C)$. 

\begin{deffa}\label{defav}
    Denote by $A(V)$ the smallest $p$-adic formal Hopf subalgebra of $\Acr(C)$ such that $\rho$ factors through $V\otimes A(V)$. 
\end{deffa}

Since $V$ is finite-dimensional, it is an actual Hopf subalgebra of $\Acr(C)$, meaning that $\Delta(A(V))\subseteq A(V)\otimes A(V)$. It can be described as follows: choosing a $\W(k)$-basis $e_1,\dots,e_n$ of $V$, we may write $\rho(e_i)=\sum e_j\otimes a_{i,j}$ with the $a_{i,j}\in\Acr(C)$ satisfying $\Delta(a_{i,j})=\sum_k a_{i,k}\otimes a_{k,j}$. Then $A(V)$ is the $\W(k)$-subalgebra of $\Acr(C)$ generated by the $a_{i,j}$ and their antipodes $a_{i,j}^\vee$. In particular, $A(V)$ is a flat finitely generated $\W(k)$-algebra. Inverting $p$, we get a $K$-Hopf algebra $A(V)\otimes_{\W(k)}K$ which defines a closed subgroup $G_V$ of $\GL(V_K)$.
\begin{lem}\label{tannaka}
    We have $G(\langle V\rangle^\otimes)\simeq G_V$.
\end{lem}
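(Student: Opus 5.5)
Since $\mathcal{R}_\Q$ is fully faithful onto a full Tannakian subcategory up to the issues below, the plan is to compare both sides with the Tannakian group of the subcategory $\langle V\rangle^\otimes\subseteq\Rep(G^\Delta)\otimes\Q$, computed via the fibre functor at $\eta$. We first check that the fibre functor on $\Isoc(Z)$ at $\eta$, composed with $\mathcal{R}_\Q$, is the forgetful functor $V\mapsto V_K$. Lift $\eta$ to a point $\tilde\eta:\Spec(k)\to\widetilde{Z}$; this is possible after extending scalars, and since $k$ is perfect and $G$ has $f$-quasiregular semiperfect coordinate ring with a $k$-point, the torsor fibre over $\eta$ has a $k$-point. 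Then the fibre of $\mathcal{R}(V)$ at $\tilde\eta^\Delta:\Spf\W(k)\to\widetilde{Z}^\Delta$ is canonically $V$, which identifies the two fibre functors.

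Next we show that $\mathcal{R}_\Q$ is fully faithful and that its essential image on $\langle V\rangle^\otimes$ is closed under subquotients in $\Isoc(Z)$. For full faithfulness, a morphism $\mathcal{R}_\Q(V)\to\mathcal{R}_\Q(W)$ is a $G^\Delta$-equivariant map of vector bundles $V\otimes\Acr(B)[1/p]\to W\otimes\Acr(B)[1/p]$. Equivalently, it is a $G^\Delta$-invariant element of $\Hom(V,W)\otimes\Acr(B)[1/p]$. The key input is that the invariants of $\Acr(B)$ under the coaction $\rho_B$ are $H^0_{\mathrm{cris}}(Z)=\W(k)$. This holds by Lemma \ref{discesa} applied to the structure sheaf, together with the fact that the field of constants of $Z$ is $k$. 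Tensoring with the finite free module $\Hom(V,W)$, an invariant element lies in $\Hom(V,W)_K$ and is then a morphism of comodules. Closure under subobjects is the main obstacle, since a sub-isocrystal of $\mathcal{R}_\Q(V)$ need not a priori come from a sub-$\W(k)$-lattice. To handle it, we take a sub-isocrystal $\mathcal{N}\subseteq\mathcal{R}_\Q(V)$ and use the fact that $\mathcal{N}$ is the image of an idempotent-free morphism. Equivalently, we view $\mathcal{N}$ as the kernel of $\mathcal{R}_\Q(V)\to\mathcal{R}_\Q(V)/\mathcal{N}$ and realize it via rank-$r$ exterior powers. Then $\det\mathcal{N}\subseteq\mathcal{R}_\Q(\wedge^rV)$ is a rank one sub-isocrystal. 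Its pullback to $\widetilde{Z}^\Delta$ should be generated by a $G^\Delta$-semi-invariant line, and we aim to show it is generated by a vector of $\wedge^rV_K$. This would use the triviality of rank one isocrystals on the perfect (hence crystalline-cohomologically trivial in the relevant sense) scheme $\widetilde{Z}$, i.e.\ vector bundles on $\Spf\Acr(B)[1/p]$ coming from $Z$ are constant. If this step resists, we fall back on the Tannakian criterion (Deligne--Milne, Prop.~2.21): $G(\langle\mathcal{R}_\Q V\rangle^\otimes)\to G_V$ is a closed immersion as soon as every object of $\langle V\rangle^\otimes$ is a subquotient of an image, which holds by construction, and it is faithfully flat as soon as full faithfulness plus closure under subobjects hold.

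Finally we identify the Tannakian group of $\langle V\rangle^\otimes$ inside $\Rep(G^\Delta)\otimes\Q$ with $G_V$. The comodule structure of every tensor construction on $V$ factors through $A(V)_K$, so $\langle V\rangle^\otimes$ is a subcategory of the category of $A(V)_K$-comodules. Conversely, $A(V)_K$ is generated by matrix coefficients of $V$ and $V^\vee$, since the antipodes $a_{ij}^\vee$ are the matrix coefficients of $V^\vee$. By standard reconstruction, the group is therefore $\Spec A(V)_K=G_V$. Combining this with the identification of fibre functors gives $G(\langle V\rangle^\otimes)\simeq G_V$.
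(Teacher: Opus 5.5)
Your first two paragraphs do not prove this lemma. They try to identify the monodromy group of $\mathcal{R}_\Q(V)$ in $\Isoc(Z)$ with $G_V$, and that claim is false in general. The lemma concerns only the Tannakian subcategory $\langle V\rangle^\otimes\subseteq\Rep(G^\Delta)\otimes\Q$, with the forgetful fibre functor $V\mapsto V_K$. The comparison with isocrystals is the content of Corollary \ref{fullyfaithful}, which needs the extra hypothesis $A(V)\cap A(B)=\W(k)$ precisely because $\mathcal{R}_\Q$ is not fully faithful in general.

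Your full-faithfulness argument breaks at ``tensoring with $\Hom(V,W)$''. A morphism $\mathcal{R}_\Q(V)\to\mathcal{R}_\Q(W)$ is an invariant of $\Hom(V,W)\otimes\Acr(B)[1/p]$ for the \emph{diagonal} coaction. By Lemma \ref{homsets} this is a comodule map $\Hom(V,W)^\vee\to\Acr(B)[1/p]$. Knowing $\Acr(B)^{G^\Delta}=\W(k)$ does not force such a map to be constant once $\Hom(V,W)$ is a non-trivial comodule.

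A concrete counterexample lies inside the setting of Subsection \ref{sottomon}: take $Z=\Spec(k)$ and $\widetilde{Z}=G$, so $B=C$. Then $\Isoc(Z)$ is just $K$-vector spaces and $\mathcal{R}_\Q$ forgets the coaction. Every $K$-linear map $V_K\to K$ is a morphism of isocrystals, whereas only the invariant ones are morphisms in $\Rep(G^\Delta)\otimes\Q$. So the monodromy group is trivial, while $G_V\neq 1$ as soon as the coaction on $V$ is non-trivial (e.g.\ $V=\rho_\frak{b}$ with $\frak{b}\neq 0$). The same example defeats your subobject step, since every line of $V_K$ is a sub-isocrystal. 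It also defeats the Deligne--Milne fallback, which needs full faithfulness. In fact the proof of Theorem \ref{mainthm} runs on exactly this failure, via Lemma \ref{inclstretta}.

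The part of your proposal that does prove the lemma is the last paragraph, and it follows the paper's route. The coaction on every tensor construction factors through $A(V)_K$. Since $A(V)_K$ is generated by the matrix coefficients of $V$ and $V^\vee$, $G_V$ is the closed subgroup of $\GL(V_K)$ they cut out, so $\langle V_K\rangle^\otimes=\Rep(G_V)$; the paper phrases this via the coend formula.

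You should make one point explicit: that $\langle V\rangle^\otimes\subseteq\Rep(G^\Delta)\otimes\Q$ agrees with $\langle V_K\rangle^\otimes$ inside $A(V)_K$-comodules. That is, $A(V)_K$-subquotients $W'$ of integral tensor constructions $T$ must come from representations of $G^\Delta$. The paper gets this from the equivalence $\textnormal{Comod}_{A_C}\otimes\Q\simeq\textnormal{Comod}_{A_C\otimes K}$ via Serre's lattice lemma. You can instead take the saturated lattice $W'\cap T$. It is a direct summand of $T$, hence stable under the coaction because $\Acr(C)$ is $p$-torsion free. With the first two paragraphs deleted and this point added, the proof is complete.
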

\noindent{\it Proof.} If $B$ is a flat Hopf $\W(k)$-algebra (resp. a Hopf $K$-algebra $B$), denote by $\textnormal{Comod}_B$ the category of $B$-comodules which are free finitely generated over $\W(k)$ (resp. finite-dimensional over $K$).

Let $A_C$ the largest Hopf $\W(k)$-subalgebra of $\Acr(C)$, \textit{i.e.} the largest $\W(k)$-subalgebra $A\subseteq \Acr(C)$ such that $\Delta(A)\subseteq A\otimes A$ (and not merely $\Delta(A)\subseteq A\widehat{\otimes}A$), and which is closed under the antipode map. The construction $V\mapsto A(V)$ of Definition \ref{defav} shows that $\textnormal{Rep}(G^\Delta)$ is equivalent to the category $\textnormal{Comod}_{A_C}$. We claim that the obvious functor
\begin{equation*}
    \textnormal{Comod}_{A_C}\otimes\Q\to\textnormal{Comod}_{A_C\otimes K}
\end{equation*}
is an equivalence of categories. The full faithfulness is immediate, whereas the essential surjectivity follows from \cite[2.4, Lemme 2]{serrelattice}.

Thus, we are in the following situation: we have an $A_C\otimes K$ comodule $V_K$, and the smallest Hopf-subalgebra on which it is defined is $A(V)\otimes K\subseteq A_C\otimes K$. We want to conclude that the Tannakian subcategory of $\textnormal{Comod}_{A_C\otimes K}$ spanned by $V_K$ is equivalent to $\textnormal{Comod}_{A(V)\otimes K}$. This is an immediate consequence of the formula for the Tannaka comodule of $\langle V_K\rangle^\otimes$ as a coend.
$\qed$

In order to compare the Tannakian groups of $\langle V\rangle^\otimes$ and $\langle\mathcal{R}_{\Q}(V)\rangle^\otimes$, we start by determining the hom-sets $\Hom(\mathcal{R}(W_1),\mathcal{R}(W_2))$ for two representations $W_1$ and $W_2$.

\begin{lem}\label{homsets}
    Let $L=\Hom_{W(k)}(W_1,W_2)$ be the internal-hom representation. There is a $W(k)$-linear bijection
    \begin{equation*}
        \Hom(\mathcal{R}(W_1),\mathcal{R}(W_2))\simeq\Hom_{\Acr(C)\textnormal{-comod}}(L^\vee,\Acr(B)).
    \end{equation*}
    The subset $\Hom(W_1,W_2)$ corresponds to the comodule maps $L^\vee\to \Acr(B)$ which factor through the canonical inclusion $W(k)\to\Acr(B)$.
\end{lem}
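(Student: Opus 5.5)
The bijection comes from identifying morphisms of trivial vector bundles with matrices over the coordinate ring, and then reading off what $G^\Delta$-equivariance means for such a matrix. Since $\widetilde{Z}^\Delta=\Spf(\Acr(B))$ is affine and $\mathcal{R}(W_i)=W_i\otimes_{\W(k)}\Acr(B)$ is a free $\Acr(B)$-module, a morphism of vector bundles $\mathcal{R}(W_1)\to\mathcal{R}(W_2)$ on $\widetilde{Z}^\Delta$, forgetting equivariance, is the same as an $\Acr(B)$-linear map $W_1\otimes\Acr(B)\to W_2\otimes\Acr(B)$. Such a map is determined by its restriction to $W_1$, which is a $\W(k)$-linear map $W_1\to W_2\otimes\Acr(B)$. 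Because $W_1$ and $W_2$ are finite free, this is an element of $L\otimes_{\W(k)}\Acr(B)$, or equivalently a $\W(k)$-linear map $\phi:L^\vee\to\Acr(B)$.

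Next I will translate $G^\Delta$-equivariance into a condition on $\phi$. The equivariant structure on $\mathcal{R}(W_i)$ is the tensor product of the comodule structure $\rho_{W_i}$ on $W_i$ with $\rho_B$ on $\Acr(B)$. A map is equivariant exactly when the two composites $W_1\to W_2\otimes\Acr(B)\widehat{\otimes}\Acr(C)$ agree: one applies $f$ and then the coaction on the target, the other applies the coaction on $W_1$ and then $f\otimes\id$. Rewriting both composites in terms of $\phi$ via the dual-basis identification, I expect the first to become $\rho_B\circ\phi$. The second should be $(\phi\otimes\id)\circ\rho_{L^\vee}$, where $\rho_{L^\vee}$ is the contragredient comodule structure on $L^\vee=W_1\otimes W_2^\vee$ induced from the internal-hom structure on $L$, which uses the antipode on the $W_2$ factor. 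The condition is then exactly that $\phi$ is a morphism of $\Acr(C)$-comodules. This step is the only real bookkeeping: one has to check that the dualization conventions (left versus right coaction, and the placement of the antipode) match. I would verify it by fixing bases $e_i$ of $W_1$ and $f_j$ of $W_2$, writing $f(e_i)=\sum_j f_j\otimes b_{ij}$ with coaction matrices $a^{(1)},a^{(2)}$, and comparing coefficients. Both sides are $\W(k)$-linear, and a $\W(k)$-linear combination of equivariant maps is equivariant, so the correspondence is a $\W(k)$-linear bijection.

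For the last assertion, a map in $\Hom(W_1,W_2)$, meaning a morphism of representations, is sent by $\mathcal{R}$ to its base change $f\otimes\id$. The associated $\phi$ is then the functional $L^\vee\to\W(k)$ given by evaluation at $f\in L$, followed by the structure map $\W(k)\to\Acr(B)$. Conversely, suppose $\phi$ factors through $\W(k)$. Then the matrix of $f$ has constant entries, so $f$ is the base change of a $\W(k)$-linear map $W_1\to W_2$. Applying the equivariance condition and the counit, or rather the injectivity of $\W(k)\to\Acr(B)$ together with the fact that the comodule condition with target $\W(k)$ (which has trivial coaction) says the functional is $G^\Delta$-invariant, shows that this map is a morphism of comodules. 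Injectivity of $\W(k)\to\Acr(B)$ holds because $\Acr(B)$ is $p$-torsion-free and has $k$-points.
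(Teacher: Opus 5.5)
Your proposal is correct and follows the paper's argument, carried out in coordinates. The paper identifies a morphism with a section of the Hom bundle $\mathcal{R}(L)$, equivalently a $G^\Delta$-equivariant map $\widetilde{Z}^\Delta\to\Spec(\Sym_{\W(k)}(L^\vee))$, equivalently a comodule map $L^\vee\to\Acr(B)$, and calls the second statement immediate.

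One small precision in your last step: passing the comodule condition from $\phi$ down to the functional $L^\vee\to\W(k)$ needs injectivity of $\Acr(C)\to\Acr(B)\widehat{\otimes}_{\W(k)}\Acr(C)$, not just of $\W(k)\to\Acr(B)$. This follows from the retraction $\Acr(B)\to\W(k)$ induced by a $k$-point of $\widetilde{Z}$.
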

\noindent{\it Proof.} This is a standard calculation with equivariant vector bundles. A morphism between $\mathcal{R}(W_1)$ and $\mathcal{R}(W_2)$ is the same as a section of the Hom vector bundle $\mathcal{R}(L)$. In turn this is the same as a a $G^\Delta$ equivariant map $\widetilde{Z}^\Delta\to\Spec(\Sym_{\W(k)}(L^\vee))$, which amounts to an $\Acr(C)$-comodule morphism $L^\vee\to\Acr(B)$. The second statement is immediate.
$\qed$

\begin{cor}\label{fullyfaithful}
    (1) Let $A(B)$ denote the smallest formal Hopf subalgebra of $\Acr(C)$ through which the comodule map $\Acr(B)\to\Acr(B)\widehat{\otimes}_{\W(k)}\Acr(C)$ factors. If $A(V)$ and $A(B)$ have trivial intersection, \textit{i.e.} if $A(V)\cap A(B)=\W(k)$, then the restriction
    \begin{equation*}
        \mathcal{R}_{\Q}:\langle V\rangle^\otimes\to\langle\mathcal{R}_{\Q}(V)\rangle^\otimes
    \end{equation*}
    is fully faithful.
    
    (2) Suppose furthermore that $G(\langle V\rangle^\otimes)$ is unipotent. Then $\mathcal{R}_{\Q}$ induces, by Tannakian duality, an isomorphism $G(\langle\mathcal{R}_{\Q}(V)\rangle^\otimes)\simeq G(\langle V\rangle^\otimes)$.
\end{cor}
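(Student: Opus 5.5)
For (1), I would reduce full faithfulness to a statement about Hom sets via Lemma \ref{homsets}. Every object of $\langle V\rangle^\otimes$ is a subquotient of sums of tensor constructions on $V$ and $V^\vee$. Up to isogeny, we may replace $W_1,W_2$ by $\W(k)$-lattices, i.e.\ by $A(V)$-comodules that are free over $\W(k)$. It therefore suffices to show that for such $W_1,W_2$ the map $\Hom(W_1,W_2)[p^{-1}]\to\Hom(\mathcal{R}(W_1),\mathcal{R}(W_2))[p^{-1}]$ is surjective. Faithfulness is clear, because pulling back along $\widetilde{Z}^\Delta\to\Spf(\W(k))$ is faithful: $\W(k)\to\Acr(B)$ is injective.

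Set $L=\Hom(W_1,W_2)$. Its comodule structure is again defined over $A(V)$, since $A(V)$ is a Hopf subalgebra closed under the antipode. By Lemma \ref{homsets}, a morphism is an $\Acr(C)$-comodule map $\phi:L^\vee\to\Acr(B)$. The image $\phi(L^\vee)$ is a finite $\W(k)$-submodule of $\Acr(B)$ that is a subcomodule on both sides. On the $L^\vee$ side its coaction coefficients lie in $A(V)$. On the $\Acr(B)$ side, the coaction of any element of $\Acr(B)$ has coefficients in $A(B)$ by definition.

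The key step is to turn this into coefficients lying in $A(V)\cap A(B)$. Choose a basis $\ell_j$ of $L^\vee$ and write $\rho(\ell_j)=\sum_i \ell_i\otimes c_{ij}$ with $c_{ij}\in A(V)$. Applying $\phi$ gives $\rho_B(\phi(\ell_j))=\sum_i\phi(\ell_i)\otimes c_{ij}$. Now apply the augmentation (the counit) of $\Acr(B)$ coming from the point $\eta$ lifted to $\widetilde{Z}$, that is, a $\W(k)$-point $e:\Acr(B)\to\W(k)$. This expresses $(e\otimes\mathrm{id})\rho_B(\phi(\ell_j))$, an element lying in $A(B)$, as $\sum_i e(\phi(\ell_i))c_{ij}$, an element lying in $A(V)$. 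Hence these elements lie in $A(V)\cap A(B)=\W(k)$.

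By the torsor property, $\widetilde{Z}^\Delta\times G^\Delta\simeq\widetilde{Z}^\Delta\times_{Z^\Delta}\widetilde{Z}^\Delta$. So an element $f\in\Acr(B)$ is determined by its orbit map $(e\otimes\mathrm{id})\rho_B(f)$ once we know it on all translates. More precisely, the map $f\mapsto(e\otimes\mathrm{id})\rho_B(f)$ is injective because $G^\Delta\to\widetilde{Z}^\Delta$, $g\mapsto e\cdot g$, is the fibre inclusion, and $\phi$ is determined by its restriction to the fibre above $\eta$ together with equivariance. Its restriction to that fibre is a constant. I would then argue that $\phi$ equals the pullback of the constant map $e\circ\phi:L^\vee\to\W(k)$, and that the constant map is a comodule map. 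The obstacle I expect is exactly this step: justifying that an equivariant section that is constant on the fibre $G^\Delta$ is globally constant, when $\widetilde{Z}^\Delta$ is not a single orbit. My first attempt would be to use that $\phi$ corresponds via Lemma \ref{discesa} to a section of $\mathcal{R}(L)$ over $Z^\Delta$. I would then compare with the constant section $e\circ\phi$ (which is invariant by the previous computation), and show that the difference vanishes because it vanishes at $\eta$ and its coaction coefficients lie in $A(V)\cap A(B)$. In other words, its $A(V)$-isotypic part is trivial, so the difference is an invariant of a trivial representation vanishing at a point, hence zero, using that $Z$ is connected with constants $k$ (so $\Hcris{0}=\W(k)$).

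For (2), by (1) the functor $\mathcal{R}_\Q:\langle V\rangle^\otimes\to\langle\mathcal{R}_\Q(V)\rangle^\otimes$ is a fully faithful tensor functor. It is essentially surjective onto $\langle\mathcal{R}_\Q(V)\rangle^\otimes$ only if its image is closed under subobjects. By the standard criterion, this gives $G(\langle\mathcal{R}_\Q(V)\rangle^\otimes)\to G(\langle V\rangle^\otimes)$ surjective and a closed immersion as soon as subobjects lift. Here I would use unipotence: by Lemma \ref{tannaka}, $G_V$ is unipotent, so the induced map $G(\langle\mathcal{R}_\Q(V)\rangle^\otimes)\to G_V$ is faithfully flat (from full faithfulness). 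Its kernel $N$ has the property that every object of $\langle V\rangle^\otimes$ on which $N$ acts trivially is preserved. Using that a unipotent group has the trivial representation as its only simple object and that fully faithful functors preserve $\mathrm{Ext}^1$-injectivity, I would show that $N$ acts trivially on $\mathcal{R}_\Q(V)$, and hence $N=1$.
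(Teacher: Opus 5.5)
Both parts have gaps; the one in (2) is more serious.

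In (1), the augmentation $e$ only controls $\phi$ along the single $G^\Delta$-orbit over $\eta$. Indeed, $(e\otimes\mathrm{id})\rho_B(f)$ is the pullback of $f$ along the orbit map $G^\Delta\to\widetilde{Z}^\Delta$, i.e.\ restriction to one fibre of $\widetilde{Z}^\Delta\to Z^\Delta$. This restriction is not injective, and an equivariant $\phi$ is not determined by it, since such $\phi$ are sections of $\mathcal{R}(L)$ over all of $Z^\Delta$. Your repair then asserts that the coaction coefficients of $\psi=\phi-e\circ\phi$ lie in $A(V)\cap A(B)$. Nothing you proved gives this, and if it were available it would apply verbatim to $\phi$, making $e$ superfluous.

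The argument you need uses the two observations of your second paragraph directly. The image $\phi(L^\vee)$ is a finite free $\W(k)$-submodule of $\Acr(B)$ stable under the coaction. In a $\W(k)$-basis, its coaction matrix has entries in $A(V)$, because it is a quotient comodule of $L^\vee$, and in $A(B)$, because it is a subcomodule of $\Acr(B)$. Hence the entries lie in $\W(k)$. Applying the counit forces this matrix to be the identity, so $\phi(L^\vee)$ consists of $G^\Delta$-invariants. Finally, $\Acr(B)^{G^\Delta}=\Gamma(Z^\Delta,\mathcal{O})=\Hcris{0}(Z)=\W(k)$, by Lemma \ref{discesa} and the assumption that $Z$ has field of constants $k$. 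This is the content of the paper's one-line proof.

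In (2) you have the two halves of the Tannakian criterion reversed. Every object of $\langle\mathcal{R}_{\Q}(V)\rangle^\otimes$ is a subquotient of some $\mathcal{R}_{\Q}(X)$ with $X\in\langle V\rangle^\otimes$. Hence the map $G(\langle\mathcal{R}_{\Q}(V)\rangle^\otimes)\to G(\langle V\rangle^\otimes)$ is automatically a closed immersion. Your kernel $N$ is therefore trivial for free, and unipotence is spent on the half that needs nothing.

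What needs proof is surjectivity, and full faithfulness alone does not give it. Faithful flatness also requires that subobjects lift. For instance, a Borel subgroup of $\mathrm{SL}_2$ is a proper subgroup whose restriction functor is fully faithful, because $B$-invariants equal $\mathrm{SL}_2$-invariants when $\mathrm{SL}_2/B$ is proper. Your claim that the map is faithfully flat ``from full faithfulness'' thus assumes exactly what must be shown.

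The missing idea is the paper's use of Chevalley's theorem, which is where unipotence genuinely enters. Write $U_1=G(\langle\mathcal{R}_{\Q}(V)\rangle^\otimes)$ as the stabilizer in $U_2=G(\langle V\rangle^\otimes)$ of a line $\ell$ in some representation $W$ of $U_2$. Since $U_1$ is unipotent, it fixes $\ell$ pointwise. By full faithfulness applied to $\Hom(\mathbf{1},W)$, we get $\ell\subseteq W^{U_1}=W^{U_2}$. Hence $U_2$ stabilizes $\ell$, and $U_1=U_2$.
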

\noindent{\it Proof.} Let $W_1,W_2\in\langle V\rangle^\otimes$. The comodule structure on the representation $L^\vee$ factors through $A(V)$. Since $A(V)\cap A(B)=\W(k)$, any comodule map $L^\vee\to B$ factors through $W(k)$. We conclude by Lemma \ref{homsets}.

For the second statement, note that by construction $G(\langle\mathcal{R}_{\Q}(V)\rangle^\otimes)\to G(\langle V\rangle^\otimes)$ is a closed immersion. Fully faithfullness of $\mathcal{R}_{\Q}$ implies that for every representation of $G(\langle V\rangle^\otimes)$ the $G(\langle V\rangle^\otimes)$- and $G(\langle \mathcal{R}_{\Q}(V)\rangle^\otimes)$-invariants are the same. But by Chevalley's theorem, if $U_1\subseteq U_2$ are unipotent groups, and for every representation of $U_2$ the $U_1$- and $U_2$-invariants are the same, then $U_1=U_2$ (compare \cite[3.1.1]{marcopol}).
$\qed$

We will use this criterion to show, in Proposition \ref{propiso} below, that the monodromy group $G(\mathcal{M}_{Z(\frak{b}^+)})$ is maximal and equal to $U(\frak{b})$ (Proposition \ref{coriso} in the introduction). But to prove Theorem \ref{teomonodromia} we need to look more closely at what happens when the monodromy groups don't coincide.

\begin{lem}\label{inclstretta}
    Suppose that $G(\langle\mathcal{R}_{\Q}(V)\rangle^\otimes)\subsetneq G(\langle V\rangle^\otimes)$ and that $G(\langle V\rangle^\otimes)$ is unipotent. There exists a non-trivial primitive element $g\in A(V)$ and an element $x\in\Acr(B)$ such that $\rho_B(x)=x\otimes 1+1\otimes g$.
\end{lem}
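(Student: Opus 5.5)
We argue through Tannakian duality for unipotent groups, reducing everything to a statement about invariants in a single extension. Write $U=G(\langle V\rangle^\otimes)$ and $U'=G(\langle\mathcal{R}_{\Q}(V)\rangle^\otimes)$; by the proof of Corollary \ref{fullyfaithful}, $U'\subseteq U$ is a closed subgroup. Since $U'\neq U$, Chevalley's theorem (as in Corollary \ref{fullyfaithful}(2)) gives a representation $W\in\langle V\rangle^\otimes$ whose $U'$-invariants strictly contain its $U$-invariants. We then refine this to the simplest possible $W$.

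Because $U$ is unipotent, take a vector $w\in W^{U'}\setminus W^{U}$ and consider the subrepresentation generated by $W^U$ and $w$. Replacing $w$ by a suitable element of its $U$-orbit span, we may assume $w$ is invariant modulo $W^U$. Projecting $W^U$ onto a line, we obtain a two-dimensional representation $E$ of $U$. It is a non-split extension $0\to \mathbf{1}\to E\to\mathbf{1}\to 0$ of trivial representations, and this extension splits after restriction to $U'$, that is, after applying $\mathcal{R}_{\Q}$. Such an extension is given by a nonzero homomorphism $U\to\mathbb{G}_a$. After clearing denominators and choosing a lattice $E^+\subseteq E$ that is a $G^\Delta$-representation (Serre's lattice lemma, as in Lemma \ref{tannaka}), the comodule map in a basis $e_1,e_2$ reads $\rho(e_2)=e_2\otimes1+e_1\otimes g$ and $\rho(e_1)=e_1\otimes 1$. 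Here $g\in A(V)$ is a nonzero primitive element, since the coefficients of representations in $\langle V\rangle^\otimes$ lie in $A(V)$. It is nontrivial because the extension is non-split over $U$: if $g$ were constant, primitivity would force $g=0$.

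Next we translate the splitting of $\mathcal{R}_{\Q}(E)$ into an element of $\Acr(B)$. Splitting means that, after inverting $p$, the map $\mathcal{R}(\mathbf 1)\to\mathcal{R}(E^+)$ dual to $e_2^\vee$ admits a retraction, or equivalently that there is a morphism $\mathcal{R}(\mathbf 1)\to \mathcal{R}(E^+)$ lifting the identity of the quotient. By Lemma \ref{homsets} applied to $W_1=\mathbf 1$ and $W_2=E^+$ (multiplying by a power of $p$ if necessary and rescaling $g$ accordingly), this is an $\Acr(C)$-comodule map $E^{+\vee}\to\Acr(B)$, i.e.\ elements $x_1,x_2\in\Acr(B)$ satisfying the dual coaction relations. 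Concretely, $\rho_B(x_1)=x_1\otimes1$, and the lifting condition forces $x_1$ to be a nonzero constant, which we normalize to $1$ (or $p^N$). The remaining coordinate $x=x_2$ then satisfies $\rho_B(x)=x\otimes1+1\otimes g$, up to sign and rescaling of $g$, both harmless.

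The main obstacle is the bookkeeping in this last step. One must check that the transposition and dual conventions in Lemma \ref{homsets} really produce the equation with $g$ on the correct side of the tensor product. One must also handle the integrality issue: the splitting exists only rationally, so the multiple of $g$ must be taken in $A(V)$ rather than in $A(V)[1/p]$, and scaling by $p^N$ is allowed since $A(V)$ is $\W(k)$-flat. Finally, the invariance of $x_1$ must force it to be a constant. For that we would use that $\Acr(B)^{G^\Delta}$, the ring of global functions on $Z^\Delta$, is $\Hcris{0}(Z)=\W(k)$ by Drinfeld's result recalled in Subsection \ref{monodromyiso}.
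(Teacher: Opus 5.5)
\textbf{There is a genuine gap, in the step that produces $E$.} Your endgame is sound. Once you have a two-dimensional $E\in\langle V\rangle^\otimes$ with coaction $e_2\mapsto e_2\otimes 1+e_1\otimes g$, $g\neq 0$, and $\mathcal{R}_{\Q}(E)$ split, Lemma \ref{homsets} applied to a $p^N$-multiple of a splitting gives $x\in\Acr(B)$ with $\rho_B(x)=x\otimes1+1\otimes g$, up to sign and a power of $p$. Incidentally, the lifting condition already pins the invariant coordinate to $p^N$, so $\Acr(B)^{G^\Delta}=\W(k)$ is not needed at that point.

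The failing step is ``replacing $w$ by a suitable element of its $U$-orbit span, we may assume $w$ is invariant modulo $W^U$''. It silently assumes the new vector is still $U'$-invariant, but a translate $uw$ is only invariant under $uU'u^{-1}$. This really can fail. Take $U$ the Heisenberg group of unitriangular $3\times 3$ matrices $u=(a,b,c)$, acting on $W=K^3$ by $ue_1=e_1$, $ue_2=e_2+ae_1$, $ue_3=e_3+be_2+ce_1$, and let $U'=\{b=c=0\}$. Then:
\begin{itemize}
\item $W^U=Ke_1$ and $W^{U'}=Ke_1\oplus Ke_3$;
\item the vectors invariant modulo $W^U$ form $Ke_1\oplus Ke_2$;
\item these two spaces meet only in $W^U$;
\item the only extension your recipe can output, $Ke_1\oplus Ke_2$, is not split over $U'$.
\end{itemize}
The extension you want does exist here, but only as the quotient $W/Ke_1$, which your search never reaches.

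There are two ways to close the gap.
\begin{enumerate}
\item \emph{Group-theoretic fix.} In characteristic $0$ closed subgroups of unipotent groups are connected, so $\mathrm{Lie}\,U'\subsetneq\mathrm{Lie}\,U$. In a nilpotent Lie algebra $\mathfrak{g}$, a subalgebra $\mathfrak{h}$ with $\mathfrak{h}+[\mathfrak{g},\mathfrak{g}]=\mathfrak{g}$ must equal $\mathfrak{g}$; to see this, show $\mathfrak{g}^i\subseteq\mathfrak{h}+\mathfrak{g}^{i+1}$ along the lower central series. Hence there is a nonzero homomorphism $\chi:U\to\mathbb{G}_a$ vanishing on $U'$, and you take $E$ to be its associated representation.
\item \emph{The paper's route, which bypasses $U'$ and Chevalley.} Non-full-faithfulness (Corollary \ref{fullyfaithful}) and Lemma \ref{homsets} give a nonconstant comodule map $\phi:W\to\Acr(B)$, which you may take injective. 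Then $W^{G^\Delta}$ embeds into $\Acr(B)^{G^\Delta}=\W(k)$, so it has rank one, spanned by some $a$. Any $b$ giving a nonzero invariant in $W/\langle a\rangle$ has coaction $b\otimes 1+a\otimes g$ with $g$ primitive and nonzero. So $x=\phi(b)$ works directly, with $g$ replaced by $\phi(a)g$.
\end{enumerate}
So the fact about invariants of $\Acr(B)$ that you invoke at the end is exactly what is needed at the start. Mapping into a ring whose only invariants are constants is what forces the socle filtration to yield the right extension.
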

\noindent{\it Proof.} By Corollary \ref{fullyfaithful} the restriction of $\mathcal{R}_{\Q}$ to $\langle V\rangle^\otimes$ is not fully faithful. Thus, by Lemma \ref{homsets}, there exists a representation $W\in\langle V\rangle^\otimes$ with a non-constant map of $\Acr(C)$-comodules $\phi:W\to\Acr(B)$, which we may assume to be injective. 

Since $G$ acts faithfully on $\widetilde{Z}$, the invariants of $\Acr(B)$ consist only of $\W(k)$, so by injectivity of $\phi$ the $G^\Delta$-invariant subspace of $W$ has rank $1$. If $a$ is a generator, $\lambda:=\phi(a)$ lies in $\W(k)\subseteq\Acr(B)$. Now take $b\in W$ which becomes a non-zero invariant in $W/\langle a\rangle$. The comodule map of $W$ sends $b$ to $b\otimes 1+a\otimes g$, where $g\in \Acr(C)$ is primitive. Therefore $x=\phi(b)$ satisfies the desired property.
$\qed$

\subsection{Monodromy of Tate-linear subvarieties}

In this subsection, we use the notation of Setting \ref{setting}. If $S\subseteq V(\frak{b})$ is a smooth formal subscheme, we can consider the three following schemes
\begin{equation*}
    Z:=S^\alg,\quad \widetilde{Z}:=\Aut^\circ_{\frak{b}}(\widetilde{\mathbb{Y}})^\alg\times_{V(\frak{b})^\alg}S^\alg,\quad G:=\Aut_{\frak{b}}^\circ(\mathbb{Y}),
\end{equation*}
and then the $G$-torsor $\widetilde{Z}\to Z$ satisfies the conditions of Subsection \ref{sottomon}. 

Let us first take $S=V(\frak{b})$. In Remark \ref{thetarapp} we had introduced the notation $\rho_\frak{b}$ for the representation of $\Aut_\frak{b}^\circ(\mathbb{Y})^\Delta$ on $\mathbb{D}(\mathbb{Y})$ induced by the canonical action of $\Aut_\frak{b}^\circ(\mathbb{Y})$ on $\mathbb{Y}$. Applying the previous section's functor $\mathcal{R}_\mathbb{Q}$, we thus obtain an isocrystal $\mathcal{N}=\mathcal{R}_{\Q}(\rho_\frak{b})$ on $S^\alg=V(\frak{b})^\alg$.

\begin{lem}
    The isocrystals $\mathcal{N}$ and $\mathcal{M}_{V(\frak{b})^\alg}$ are isomorphic.
\end{lem}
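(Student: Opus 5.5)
We compare the two isocrystals on the prismatization, using Lemma \ref{discesa}. Both are (up to isogeny) images of vector bundles on $Z^\Delta=(V(\frak{b})^\alg)^\Delta$. It is enough to produce a $G^\Delta$-equivariant isomorphism between their pullbacks to $\widetilde{Z}^\Delta=\Spf(\Acr(B))$, where $B$ is the (perfect) coordinate ring of $\Aut^\circ_\frak{b}(\widetilde{\mathbb{Y}})^\alg$ and $G=\Aut^\circ_\frak{b}(\mathbb{Y})$. We work integrally, with the Dieudonné crystal $\mathbb{D}(\mathcal{Y})$ of the universal $p$-divisible group $\mathcal{Y}$ restricted to $V(\frak{b})^\alg$; by property 2 of the prismatization it is a vector bundle $\mathcal{E}$ on $Z^\Delta$. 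On the other side, $\mathcal{N}$ comes from $\mathcal{R}(\rho_\frak{b})$, which is the trivial bundle $\mathbb{D}(\mathbb{Y})\otimes_{\W(k)}\Acr(B)$ with $G^\Delta$ acting diagonally through $\rho_\frak{b}$ and $\rho_B$.

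\textbf{Step 1: trivialization upstairs.} The pullback of $\mathcal{Y}$ to $\widetilde{Z}$ is, tautologically, the $p$-divisible group attached to the $\Aut(\mathbb{Y})$-torsor; by \eqref{xn} it is $\widetilde{Z}\times^{G}\mathbb{Y}$. Since $\widetilde{Z}\to Z$ is the torsor trivializing it, this pullback is canonically isomorphic to $\mathbb{Y}_{\widetilde{Z}}$. This can also be seen from the description of $\mathcal{T}^\circ$: a point of $\Aut^\circ_\frak{b}(\widetilde{\mathbb{Y}})$ is a pair $((\mathbb{Y}_R,\rho),\mathrm{id})$. Applying the Dieudonné functor over the semiperfect ring $B$ gives an isomorphism
\begin{equation*}
    \mathcal{E}|_{\widetilde{Z}^\Delta}\simeq\mathbb{D}(\mathbb{Y})\otimes_{\W(k)}\Acr(B)=\mathcal{R}(\rho_\frak{b}).
\end{equation*}

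\textbf{Step 2: equivariance.} We must check that the descent datum on $\mathcal{E}|_{\widetilde{Z}^\Delta}$ coming from $Z^\Delta$ matches the diagonal action. The key point is that the trivialization $\mathcal{Y}_{\widetilde{Z}}\simeq\mathbb{Y}_{\widetilde{Z}}$ transforms under $G$ via the tautological action of $\Aut^\circ_\frak{b}(\mathbb{Y})$ on $\mathbb{Y}$. Concretely, on $\widetilde{Z}\times G$ the two pullbacks of the trivialization differ by the universal automorphism of $\mathbb{Y}_{C_\frak{b}}$. By functoriality of $\mathbb{D}$, applied over the quasiregular semiperfect ring $B\otimes C_\frak{b}$ using $(\widetilde{Z}\times G)^\Delta=\Spf(\Acr(B)\widehat{\otimes}\Acr(C_\frak{b}))$, this universal automorphism becomes the comodule map $\mathbb{D}(\mathbb{Y})\to\mathbb{D}(\mathbb{Y})\otimes\Acr(C_\frak{b})$. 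By definition this map is $\sigma_\frak{b}$, hence $\rho_\frak{b}$ by Lemma \ref{lemmafinalehopf}. Then Lemma \ref{discesa} shows that $\mathcal{E}$ and the descent of $\mathcal{R}(\rho_\frak{b})$ are isomorphic vector bundles on $Z^\Delta$. Inverting $p$ gives $\mathcal{N}\simeq\mathcal{M}_{V(\frak{b})^\alg}$.

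\textbf{Main obstacle.} Step 2 is the hard part. It requires that the Dieudonné functor on quasisyntomic schemes is compatible with the prismatization comparison (property 2) functorially in the base. It also requires that the isomorphism $(\widetilde{Z}\times_Z\widetilde{Z})^\Delta\simeq\widetilde{Z}^\Delta\times G^\Delta$ identifies the two projections with the two pullbacks of the trivialization. We would address this by evaluating everything on the quasiregular semiperfect rings $B$ and $B\widehat{\otimes}C_\frak{b}$. There crystals are just modules over $\Acr$, so the comparison reduces to the naturality of $\mathbb{D}$ in Theorem \ref{scholzeweinstein}. The contravariance and Frobenius conventions also need care, so that the representation obtained is $\rho_\frak{b}$ on $\mathbb{D}(\mathbb{Y})$ and not its dual.
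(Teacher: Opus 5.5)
Your proposal is correct and follows essentially the paper's argument: pull back to the torsor, where the universal $p$-divisible group becomes the constant one $\mathbb{Y}$ with the diagonal action, apply the Dieudonné functor to get $\mathcal{R}(\rho_{\mathfrak{b}})$ equivariantly, and descend via Lemma \ref{discesa}. The only differences are that the paper first reduces to $\mathfrak{b}=\mathfrak{a}_\mathbb{Y}$ and leaves the descent-datum comparison and the algebraization step implicit, whereas you work with $\mathfrak{b}$ directly and spell out that the two pullbacks of the trivialization differ by the universal automorphism, which produces $\sigma_{\mathfrak{b}}$ and hence $\rho_{\mathfrak{b}}$.
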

\noindent{\it Proof.} It is enough to prove this for $\frak{b}=\frak{a}_\mathbb{Y}$. By definition, the pullback of the universal $p$-divisible group on $\defsus(\mathbb{Y})$ to $\Aut^\circ(\widetilde{\mathbb{Y}})$ is the constant $p$-divisible group $\mathbb{Y}\times_k\Aut^\circ(\widetilde{\mathbb{Y}})$, on which $\Aut^\circ(\mathbb{Y})$ acts via the diagonal action. This property is preserved by taking the algebraization and the Dieudonné functor, so the pullback of $\mathcal{M}_{V(\frak{a}_\mathbb{Y})^\alg}$ to $(\Aut^\circ(\widetilde{\mathbb{Y}})^\alg)^\Delta$ is precisely $\mathcal{R}_\Q(\rho_\frak{b})$ as an equivariant coherent sheaf. 
$\qed$

Now we allow $S$ to be an arbitary smooth formal subscheme of $V(\frak{b})$. Let us introduce the coordinate rings of our spaces: we write
\begin{equation*}
    V(\frak{b})^\alg=\Spec(A_{\frak{b}}),\quad Z=S^\alg=\Spec(A_\frak{b}/I_S),\quad G=\Spec(C_{\frak{b}}).
\end{equation*}
The ring $A_\frak{b}$ is isomorphic to $k[\![x_1,\dots,x_n]\!]$, and since $S\subseteq V(\frak{b})$ is a smooth formal subscheme, $I_S$ is generated by some regular sequence $(f_1,\dots,f_m)$. Up to a change of coordinates we may assume that $f_i=x_i$ for $1\le i\le m$. By the discussion of Setting \ref{setting} we have
\begin{equation*}
    \Aut^\circ_{\frak{b}}(\widetilde{\mathbb{Y}})^\alg=\Spec((A_{\frak{b}})_\text{perf}),\quad\widetilde{Z}
    =\Spec((A_{\frak{b}})_\text{perf}/I_S(A_{\frak{b}})_\text{perf}),
\end{equation*}
and the proof of Lemma \ref{autbeqrsp} shows that
\begin{equation*}
    \quad C_{\frak{b}}=(A_{\frak{b}})_\text{perf}/(x_1,\dots,x_n)\simeq k[x_1^{\pmeninf},\dots,x_n^{\pmeninf}]/(x_1,\dots,x_n).
\end{equation*}

\begin{lem}\label{prep1}
    (1) The $G$-equivariant closed immersions
    \begin{equation*}
        \Aut^\circ_\frak{b}(\mathbb{Y})\hookrightarrow\widetilde{Z}\hookrightarrow\Aut^\circ_{\frak{b}}(\widetilde{\mathbb{Y}})^\alg
    \end{equation*}
    give rise to $G^\Delta$-equivariant maps
    \begin{equation*}
        \Aut^\circ_\frak{b}(\mathbb{Y})^\Delta\to\widetilde{Z}^\Delta\to(\Aut^\circ_{\frak{b}}(\widetilde{\mathbb{Y}})^\alg)^\Delta.
    \end{equation*}
    In coordinates these correspond to inclusions
    \begin{equation*}
        W((A_{\frak{b}})_\textnormal{perf})\hookrightarrow\Acr((A_{\frak{b}})_\textnormal{perf}/I_S(A_{\frak{b}})_\textnormal{perf})\hookrightarrow\Acr(C_{\frak{b}})
    \end{equation*}
    which are compatible with the $\Acr(C_\frak{b})$-comodule structures.
    
    (2) The image of $W((A_{\frak{b}})_\textnormal{perf})$ inside $\Acr(C_{\frak{b}})$ is contained in $W((C_{\frak{b}})^\flat)$.

\end{lem}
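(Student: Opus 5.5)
The plan is to derive everything from the functoriality of $\Acr$ on semiperfect rings, together with the explicit descriptions \eqref{calcoloperf} and \eqref{calcoloacris}. Part (2) is formal and part (1) reduces to an injectivity check, which is where the real work lies.

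\emph{Construction of the maps and equivariance.} The closed immersions are dual to the surjections
\begin{equation*}
(A_\frak{b})_\textnormal{perf}\onto{}(A_\frak{b})_\textnormal{perf}/I_S(A_\frak{b})_\textnormal{perf}\onto{}C_\frak{b}.
\end{equation*}
These are maps of semiperfect rings, so the universal property of $\Acr$ gives maps $\Acr((A_\frak{b})_\textnormal{perf})\to\Acr((A_\frak{b})_\textnormal{perf}/I_S)\to\Acr(C_\frak{b})$. The source is perfect, so $\Acr((A_\frak{b})_\textnormal{perf})=W((A_\frak{b})_\textnormal{perf})$. By property 1 of the prismatization, these ring maps are the maps of prismatizations.

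For $G$-equivariance at the level of schemes, note that $G=\Aut^\circ_\frak{b}(\mathbb{Y})$ is the fibre of the torsor over the base point $\eta$ (formula \eqref{formaCb}). It sits inside $\widetilde{Z}$ as the fibre over $\eta\in S$, and the action of $G$ on $\Aut^\circ_\frak{b}(\widetilde{\mathbb{Y}})^\alg$ restricts to the torsor actions on $\widetilde{Z}$ and to translation on $G$. Applying $\Acr$ to the action maps, and using $\Acr(R\otimes_k C)\simeq\Acr(R)\widehat{\otimes}_{\W(k)}\Acr(C)$ (which follows from \eqref{calcoloacris}, or from Künneth as in the proof of Proposition \ref{isomsm}), the ring maps intertwine the comodule maps $\rho_B$ and $\Delta$.

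\emph{Part (2).} Any ring map from a perfect ring $P$ to a semiperfect ring $R$ factors uniquely through $R^\flat=\varprojlim_F R\to R$. Applying this to $(A_\frak{b})_\textnormal{perf}\to C_\frak{b}$, and using naturality of $\Acr$ together with $\Acr(P)=W(P)$, the map $W((A_\frak{b})_\textnormal{perf})\to\Acr(C_\frak{b})$ factors as
\begin{equation*}
W((A_\frak{b})_\textnormal{perf})\to W(C_\frak{b}^\flat)\to\Acr(C_\frak{b}),
\end{equation*}
which gives (2).

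\emph{Injectivity, the main obstacle.} I would make both rings explicit. Set $R_S=(A_\frak{b})_\textnormal{perf}/(x_1,\dots,x_m)$. Then $R_S\simeq T[x_1^{\pmeninf},\dots,x_m^{\pmeninf}]/(x_1,\dots,x_m)$, with $T$ a perfect ring: essentially the perfection of $k[\![x_{m+1},\dots,x_n]\!]$, after adjusting for the completion. Formula \eqref{calcoloperf} then gives $R_S^\flat\simeq T[\![x_1^{\pmeninf},\dots,x_m^{\pmeninf}]\!]$. Likewise $C_\frak{b}^\flat\simeq k[\![x_1^{\pmeninf},\dots,x_n^{\pmeninf}]\!]$.

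By \eqref{calcoloacris}, elements of $\Acr(R_S)$ are $p$-adically convergent sums of divided-power monomials $\prod_{i\le m}[x_i]^{\alpha_i}/(\alpha_i!)_p$ with coefficients in $W$ of the part involving $x_{m+1},\dots,x_n$. Elements of $\Acr(C_\frak{b})$ have the same shape with all $n$ variables divided, as in the $\mu_{p^\infty}$ example. The map $\Acr(R_S)\to\Acr(C_\frak{b})$ sends $[x_i]\mapsto[x_i]$. For $i>m$, it embeds the coefficient ring $W(T)$ into the ring of divided-power series in $[x_{m+1}],\dots,[x_n]$, through the injection $T\hookrightarrow k[\![x_{m+1}^{\pmeninf},\dots]\!]$ followed by $W(k[\![\cdots]\!])\subseteq\Acr$. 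Comparing coefficients in these normal forms then gives injectivity.

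The delicate points are two. First, one must show that $\Acr(R)$ is $p$-torsion free and has such a normal form, and that the $p$-completion does not create a kernel. I would reduce mod $p$ and compare the explicit $\Fp$-bases of divided-power monomials, as in \cite[2.5.3--2.5.4]{drinfeldacris}. Second, one needs $T\to C_\frak{b}^\flat$ to be injective, which is a Krull-intersection/separatedness statement. The first inclusion $W((A_\frak{b})_\textnormal{perf})\hookrightarrow\Acr(R_S)$ then follows from the composite $W((A_\frak{b})_\textnormal{perf})\to W(C_\frak{b}^\flat)\subseteq\Acr(C_\frak{b})$ being injective. That in turn holds because $(A_\frak{b})_\textnormal{perf}\to C_\frak{b}^\flat=k[\![x^{\pmeninf}]\!]$ is injective (power series embed into their perfected completion), $W$ preserves injections of perfect rings, and $W(C_\frak{b}^\flat)\subseteq\Acr(C_\frak{b})$ by the normal form.
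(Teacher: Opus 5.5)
Your proposal is correct and follows the paper's route. The maps come from functoriality of $\Acr$, with $\Acr$ of a perfect ring being its Witt vectors, and (2) comes from the factorization through $(A_\frak{b})_\textnormal{perf}\hookrightarrow (C_\frak{b})^\flat$ via \eqref{calcoloperf}. Injectivity is read off from the explicit description \eqref{calcoloacris}: the paper invokes this in one line, and you spell it out through divided-power normal forms and the injectivity of $W(C_\frak{b}^\flat)\to\Acr(C_\frak{b})$, including the Künneth-based equivariance check that the paper takes for granted.
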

\noindent{Proof.} The only assertion of (1) which requires proof is that the maps of rings are inclusions, which follows from the explicit description of the functor $\Acr$ given in \eqref{calcoloacris}. Similarly, (2) follows from \eqref{calcoloperf} since $(A_{\frak{b}})_\textnormal{perf}$ injects into $(C_{\frak{b}})^\flat=k[\![x_1^{\pmeninf},\dots,x_n^{\pmeninf}]\!]$.
$\qed$

In order to prove Theorem \ref{mainthm}, we establish in Lemma \ref{prep2} a criterion for $S\subseteq V(\frak{b})$ to factor through the inclusion $V(\frak{c})\subseteq V(\frak{b})$ for a fixed Dieudonné-Lie subalgebra $\frak{c}\subseteq\frak{b}$. The first step is to describe the ideal $I_\frak{c}\subseteq A_\frak{b}$ which defines the closed immersion $V(\frak{c})^\alg\subseteq V(\frak{b})^\alg$. 

\begin{lem}\label{tecnicofinale}
    Let $I\subseteq A_\frak{b}$ be an ideal such that $A_\frak{b}/I$ is regular, and let $I^{\pmeninf}$ denote the ideal $\varinjlim_F I$ of $(A_\frak{b})_\textnormal{perf}$. Then $I=I^{\pmeninf}\cap A_\frak{b}$.
\end{lem}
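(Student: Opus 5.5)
We have to show that if $A_\frak{b}/I$ is regular, then any $a\in A_\frak{b}$ whose image in $(A_\frak{b})_\textnormal{perf}=\varinjlim_F A_\frak{b}$ lies in $I^{\pmeninf}$ already lies in $I$. The inclusion $I\subseteq I^{\pmeninf}\cap A_\frak{b}$ is trivial. For the converse we unwind the colimit. The transition maps are $F:A_\frak{b}\to A_\frak{b}$, which are injective since $A_\frak{b}\simeq k[\![x_1,\dots,x_n]\!]$ is reduced. So $I^{\pmeninf}$ is the union of the ideals generated by $I$ at each finite stage of the colimit. Concretely, $a\in I^{\pmeninf}\cap A_\frak{b}$ means there is some $e\ge0$ with
\begin{equation*}
    a^{p^e}\in F^e(I)A_\frak{b}=I^{[p^e]},
\end{equation*}
the $e$-th Frobenius power of $I$, i.e. the ideal generated by the $p^e$-th powers of elements of $I$. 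The claim therefore reduces to showing that $I$ is Frobenius-closed: if $a^{p^e}\in I^{[p^e]}$ then $a\in I$.

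The key input is Kunz's theorem. Since $A_\frak{b}$ is regular, the Frobenius $F:A_\frak{b}\to A_\frak{b}$ is flat. For a flat ring map $\phi:R\to S$ and ideals $J\subseteq R$, one has $(JS:_S\phi(a))=(J:_Ra)S$. Applying this with $\phi=F^e$ and $J=I$ gives
\begin{equation*}
    (I^{[p^e]}:a^{p^e})=(I:a)^{[p^e]}.
\end{equation*}
If $a^{p^e}\in I^{[p^e]}$, the left side is the unit ideal. Hence $(I:a)^{[p^e]}=A_\frak{b}$, which forces $(I:a)=A_\frak{b}$: a proper ideal is contained in the maximal ideal $\frak{m}$, and then its Frobenius power lies in $\frak{m}$ as well. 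We conclude $a\in I$.

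In this argument the regularity of $A_\frak{b}/I$ is not even used, only the regularity of $A_\frak{b}$. Alternatively, one can use regularity of the quotient directly, as the paper's setup suggests. Since $A_\frak{b}/I$ is regular, $I$ is generated by part of a regular system of parameters, and after a change of coordinates $I=(x_1,\dots,x_m)$. Then $I^{[p^e]}=(x_1^{p^e},\dots,x_m^{p^e})$ is a monomial ideal, and $a^{p^e}$ is a power series in $x_i^{p^e}$ with coefficients $c^{p^e}$ for $k$ perfect. Comparing monomial supports shows that every monomial of $a$ is divisible by some $x_i$ with $i\le m$, so $a\in I$.

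The only step needing care is the identification of $I^{\pmeninf}\cap A_\frak{b}$ with $\{a: a^{p^e}\in I^{[p^e]}\text{ for some }e\}$, that is, correctly tracking how $A_\frak{b}$ embeds into its colimit perfection. This is routine bookkeeping with the injective transition maps. I would present the coordinate argument as the main proof, since it is the most elementary.
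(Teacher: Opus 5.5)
\textbf{The gap: you computed the contraction of the wrong ideal.} It occurs in the step you called routine bookkeeping.

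In $\varinjlim_F I$ the transition maps are $F|_I\colon I\to I$, so a copy of $I$ sits at every stage $e$ of the colimit, not only the image of the stage-$0$ copy. Identify $(A_\mathfrak{b})_{\mathrm{perf}}$ with $\bigcup_e A_\mathfrak{b}^{1/p^e}$. Then $I^{\pmeninf}=\{x : x^{p^e}\in I \text{ for some } e\}$. In coordinates $I=(y_1,\dots,y_\ell)$ this is $(y_1^{\pmeninf},\dots,y_\ell^{\pmeninf})$, exactly as written in the paper's proof. It is also the kernel of $(A_\mathfrak{b})_{\mathrm{perf}}\to(A_\mathfrak{b}/I)_{\mathrm{perf}}$, which is what Lemma~\ref{prep2} uses.

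What you wrote down is different: the union over $e$ of $I^{[p^e]}=F^e(I)A_\mathfrak{b}$ placed at stage $e$. That is the extended ideal $I\cdot(A_\mathfrak{b})_{\mathrm{perf}}$, which is strictly smaller; for instance it does not contain $y_1^{1/p}$. The correct unwinding is therefore: $a\in I^{\pmeninf}\cap A_\mathfrak{b}$ iff $a^{p^e}\in I$ for some $e$, not $a^{p^e}\in I^{[p^e]}$.

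Your Kunz argument is fine as far as it goes. It shows that ideals of a regular local ring are Frobenius closed, i.e.\ $I(A_\mathfrak{b})_{\mathrm{perf}}\cap A_\mathfrak{b}=I$. But since $I(A_\mathfrak{b})_{\mathrm{perf}}\subseteq I^{\pmeninf}$, this is a weaker statement and does not give the lemma.

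Your remark that the regularity of $A_\mathfrak{b}/I$ is never used should have been the warning sign. For the actual statement the hypothesis is indispensable. Take $I=(y_1^2)$: then $y_1^p\in I$, so $y_1\in I^{\pmeninf}\cap A_\mathfrak{b}$, yet $y_1\notin I$.

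\textbf{The repair is short.} With the correct description you have $a^{p^e}\in I$. Since the regular local ring $A_\mathfrak{b}/I$ is a domain, $I$ is prime (reducedness of $A_\mathfrak{b}/I$ would already suffice), so $a\in I$. Alternatively, your monomial-support argument goes through verbatim with the condition $a^{p^e}\in(x_1,\dots,x_m)$ in place of $a^{p^e}\in(x_1^{p^e},\dots,x_m^{p^e})$. That is precisely the paper's coordinate proof, and Kunz's theorem is then unnecessary.
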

\noindent{\it Proof.}
We may choose coordinates $y_1,\dots, y_n$ such that $A_\frak{b}=k[\![y_1,\dots,y_n]\!]$ and $I=(y_1,\dots,y_\ell)$ for some $1\le\ell\le n$. Then the statement says that the intersection
\begin{equation*}
    (y_1^{\pmeninf},\dots,y_\ell^{\pmeninf})\cap k[\![y_1,\dots,y_n]\!]\subseteq\varinjlim_Fk[\![y_1,\dots,y_n]\!]
\end{equation*}
is equal to $(y_1,\dots,y_\ell)$, which is clear.
$\qed$

This reduces the problem to describing the ideal $I_\frak{c}^{\pmeninf}$, which by construction is the ideal defining the closed immersion $\Aut^\circ_\frak{c}(\widetilde{\mathbb{Y}})^\alg\subseteq\Aut^\circ_\frak{b}(\widetilde{\mathbb{Y}})^\alg$. Consider the commutative diagram 
\[\begin{tikzcd}
	{\Aut^\circ_\frak{c}(\widetilde{\mathbb{Y}})} & {\Aut^\circ_\frak{b}(\widetilde{\mathbb{Y}})} & \\
	{\widetilde{\mathcal{H}}_{\mathbb{Y},\frak{c}}^\circ} & {\widetilde{\mathcal{H}}_{\mathbb{Y},\frak{b}}^\circ} & {\widetilde{\mathbb{H}}_\frak{c}}
	\arrow[hook, from=1-1, to=1-2]
	\arrow["\exp"{description}, shift left=3, draw=none, from=1-1, to=2-1]
	\arrow["\simeq", from=2-1, to=1-1]
	\arrow[hook, from=2-1, to=2-2]
	\arrow["\simeq", from=2-2, to=1-2]
	\arrow["\exp"', draw=none, from=2-2, to=1-2]
	\arrow[from=2-2, to=2-3]
\end{tikzcd}\]
where formal group scheme $\widetilde{\mathbb{H}}_\frak{c}$ is the universal cover of the $p$-divisible group whose covariant Dieudonné module is $\frak{b}/\frak{c}$. On the algebraic side, this corresponds to the commutative diagram of topological $k$-algebras 
\[\begin{tikzcd}
	& {(C_\frak{b})^\flat} & {(C_\frak{c})^\flat} \\
	{k[\![\widetilde{I}_\frak{c}]\!]} & {(C_{\mathcal{H}_{\mathbb{Y},\frak{b}^\circ}})^\flat} & {(C_{\mathcal{H}_{\mathbb{Y},\frak{c}^\circ}})^\flat}
	\arrow[two heads, from=1-2, to=1-3]
	\arrow["\simeq"', from=1-2, to=2-2]
	\arrow["\simeq"', from=1-3, to=2-3]
	\arrow[hook, from=2-1, to=2-2]
	\arrow[two heads, from=2-2, to=2-3]
\end{tikzcd}\]
where we denote by $\widetilde{I}_\frak{c}$ the augmentation ideal of $\widetilde{\mathbb{H}}_\frak{c}$. Then, by definition, the ideal $I_\frak{c}^{\pmeninf}$ is the intersection of $(A_\frak{b})_\textnormal{perf}$ and $\widetilde{I}_\frak{c}$ inside $(C_\frak{b})^\flat$.

\begin{lem}\label{prep2}
    Let $J_\frak{c}\subseteq R_\frak{b}$ denote the Hopf ideal defining the closed immersion $U(\frak{c})\subseteq U(\frak{b})$. Thus $(\frak{b}/\frak{c})^\vee$ is mapped into $J_\frak{c}$ via the inverse of the exponential map, $\log^*:\Sym_K(\frak{b}^\vee)\simeq R_\frak{b}$. If $(\frak{b}/\frak{c})^\vee$, considered as a subset of $\rho_\frak{b}^*(J_\frak{c})\subseteq\Acr(C_\frak{b})[p^{-1}]$, is contained in $\Acr((A_\frak{b})_\textnormal{perf}/I_S(A_\frak{b})_\textnormal{perf})[p^{-1}]$, then $S$ is a subscheme of $V(\frak{c})$.
\end{lem}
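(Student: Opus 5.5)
Via Lemma \ref{tecnicofinale} the goal becomes $I_\frak{c}\subseteq I_S$. Since $A_\frak{b}/I_S$ is regular, $I_S=I_S^{\pmeninf}\cap A_\frak{b}$, so it suffices to show $I_\frak{c}^{\pmeninf}\subseteq I_S(A_\frak{b})_\textnormal{perf}$. By the description preceding the lemma, $I_\frak{c}^{\pmeninf}=(A_\frak{b})_\textnormal{perf}\cap\widetilde{I}_\frak{c}$ inside $(C_\frak{b})^\flat$. The plan is therefore to show that every $f\in(A_\frak{b})_\textnormal{perf}$ vanishing on $\Aut^\circ_\frak{c}(\widetilde{\mathbb{Y}})^\alg$ also vanishes on $\widetilde{Z}$. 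In other words, the closed subscheme $\widetilde{Z}$ should lie inside the preimage of $\Aut^\circ_\frak{c}(\widetilde{\mathbb{Y}})^\alg$, that is, inside the locus where the projection to $\widetilde{\mathbb{H}}_\frak{c}$ is trivial.

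To get this, I would first translate the augmentation ideal $\widetilde{I}_\frak{c}$ into crystalline terms. The $p$-divisible group with Dieudonné module $\frak{b}/\frak{c}$ has, by Lemma \ref{deftheta} and Proposition \ref{isomsm}, its universal logarithms $\theta((\frak{b}/\frak{c})^\vee)$ sitting inside $\Acr(C_{\mathbb{H}_\frak{c}})$. Pulling these back along $\widetilde{\mathcal{H}}_{\mathbb{Y},\frak{b}}^\circ\to\widetilde{\mathbb{H}}_\frak{c}$ and through $\exp$, diagram \eqref{diagrammafinale} of Lemma \ref{lemmafinalehopf} identifies them with $\rho_\frak{b}^*(\log^*((\frak{b}/\frak{c})^\vee))\subseteq\Acr(C_\frak{b})[p^{-1}]$. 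These are exactly the elements in the hypothesis.

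The key claim is that, for a perfect-ring point, the projection to $\widetilde{\mathbb{H}}_\frak{c}$ is trivial if and only if these logarithms vanish. I would check this using \eqref{mappacoverpdiv}. A point $\Spec(R)\to\widetilde{\mathbb{H}}_\frak{c}$ with $R$ quasiregular semiperfect is a $K[F]$-map $(\frak{b}/\frak{c})^\vee\to\Acr(R)[p^{-1}]$, namely the evaluation of these universal logarithms. Now evaluate via the map $\Acr(C_\frak{b})\to\Acr(R)$ for $R=(A_\frak{b})_\textnormal{perf}/I_S$ pulled back to $\widetilde{Z}$. By hypothesis the logarithms lie in $\Acr(\widetilde{Z}$'s ring$)[p^{-1}]$, which sits inside $\Acr(C_\frak{b})[p^{-1}]$ by Lemma \ref{prep1}.

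The main obstacle is turning this membership into vanishing. I would use $F$: the logarithms satisfy $F$-linear relations with slopes in $[-1,0[$ of the connected group $\mathbb{H}_\frak{c}$, so $F$ is topologically nilpotent on them, exactly as in Remark \ref{trivialintrem}. On the other hand, the elements of $\Acr(\widetilde{Z})$ coming from the group direction are controlled by the comodule structure. The logarithms $\ell$ are primitive, so under $\rho_B$ a logarithm restricted to $\widetilde{Z}$ satisfies $\rho_B(\ell)=\ell\otimes1+1\otimes\ell$.

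The cleaner route is to argue that the composite $\widetilde{Z}\to\widetilde{\mathbb{H}}_\frak{c}$ is $G$-equivariant with respect to the homomorphism $G\to T_p\mathbb{H}_\frak{c}$. Its values, computed by \eqref{mappacoverpdiv} on the $G^\Delta$-stable sub-ring, descend to a map $Z\to\mathbb{H}_\frak{c}$. The hypothesis should force this map to factor through $T_p\mathbb{H}_\frak{c}$ in a way that is constant on the connected $Z$, which is pointed at $\eta\mapsto0$, and hence trivial. Triviality gives $\widetilde{Z}\subseteq\Aut^\circ_\frak{c}(\widetilde{\mathbb{Y}})^\alg$, so $I_\frak{c}^{\pmeninf}\subseteq I_S^{\pmeninf}$, and Lemma \ref{tecnicofinale} concludes.
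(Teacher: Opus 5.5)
There is a genuine gap, and it starts with the target. The closed point of $Z$ gives a closed immersion $G=\Aut^\circ_\frak{b}(\mathbb{Y})\hookrightarrow\widetilde{Z}$ (Lemma \ref{prep1}). Restricted to $G$, your ``logarithms'' are the elements $\rho_\frak{b}^*\log^*((\frak{b}/\frak{c})^\vee)$, which are nonzero for $\frak{c}\subsetneq\frak{b}$ by the injectivity in Lemma \ref{lemmafinalehopf}. So they cannot vanish on $\widetilde{Z}$, and the map to $\widetilde{\mathbb{H}}_\frak{c}$ cannot be trivial there: ``turning membership into vanishing'' aims at a false statement.

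For the same reason, the inclusion $\widetilde{Z}\subseteq\Aut^\circ_\frak{c}(\widetilde{\mathbb{Y}})^\alg$, i.e.\ $I_\frak{c}^{\pmeninf}\subseteq I_S(A_\frak{b})_\textnormal{perf}$, is false whenever $I_\frak{c}\neq0$. You set this as the goal at the start and assert it at the end. If $y\in I_\frak{c}$ is a coordinate, then $y^{1/p}\in I_\frak{c}^{\pmeninf}$. But $y^{1/p}$ does not lie in the extension to $(A_\frak{b})_\textnormal{perf}$ of the maximal ideal of $A_\frak{b}$, and that extension contains $I_S(A_\frak{b})_\textnormal{perf}$. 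The right goal is $I_\frak{c}^{\pmeninf}\subseteq I_S^{\pmeninf}$, a statement about perfections.

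Also, there is no a priori morphism $\widetilde{Z}\to\widetilde{\mathbb{H}}_\frak{c}$ to ``evaluate'' along. The projection lives on the formal group $\Aut^\circ_\frak{b}(\widetilde{\mathbb{Y}})$, not on $\Spec((A_\frak{b})_\textnormal{perf})$, and the ring map goes $\Acr(B)\to\Acr(C_\frak{b})$, not the other way.

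The missing idea is that the hypothesis \emph{produces} such a morphism, and all the information lies in its existence. Your identification of the elements in the hypothesis with pulled-back universal logarithms of $\widetilde{\mathbb{H}}_\frak{c}$ is correct. By \eqref{mappacoverpdiv}, their membership in $\Acr(B)[p^{-1}]$, for $B=(A_\frak{b})_\textnormal{perf}/I_S(A_\frak{b})_\textnormal{perf}$, is exactly a morphism $\widetilde{Z}\to\widetilde{\mathbb{H}}_\frak{c}$ \emph{extending} the nontrivial map $G\xrightarrow{\log}\widetilde{\mathcal{H}}^\circ_{\mathbb{Y},\frak{b}}\to\widetilde{\mathbb{H}}_\frak{c}$.

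The paper then argues as follows:
\begin{itemize}
\item Since $\widetilde{\mathbb{H}}_\frak{c}$ is perfect, this morphism factors through $\varinjlim_F\widetilde{Z}=\Spf(B^\flat)$.
\item Hence the ring map $k[\![\widetilde{I}_\frak{c}]\!]\to(C_\frak{b})^\flat$ corresponding to $\Aut^\circ_\frak{b}(\widetilde{\mathbb{Y}})\to\widetilde{\mathbb{H}}_\frak{c}$ factors through $B^\flat=(k[\![x_{m+1},\dots,x_n]\!])_\textnormal{perf}[\![x_1^{\pmeninf},\dots,x_m^{\pmeninf}]\!]$.
\item By continuity, $\widetilde{I}_\frak{c}$ lands in topologically nilpotent elements, and those lying in $(A_\frak{b})_\textnormal{perf}$ form $I_S^{\pmeninf}$.
\item Since $I_\frak{c}^{\pmeninf}=(A_\frak{b})_\textnormal{perf}\cap\widetilde{I}_\frak{c}$, this gives $I_\frak{c}^{\pmeninf}\subseteq I_S^{\pmeninf}$, and Lemma \ref{tecnicofinale} concludes.
\end{itemize}

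Your $F$-nilpotence and primitivity remarks play no role here. Your ``cleaner route'' leaves exactly this step unargued. Once $\widetilde{Z}\to\widetilde{\mathbb{H}}_\frak{c}$ exists, any induced map from $Z=\Spec(k[\![x_{m+1},\dots,x_n]\!])$ to a connected formal group is automatically trivial, because a domain has no nonzero nilpotents. So ``triviality'' carries no information; the real work is translating the existence of the extension into a statement about ideals, via the perfection argument above. That route also presupposes $G$-equivariance of the map to $\widetilde{\mathbb{H}}_\frak{c}$, which is not available for an arbitrary subalgebra $\frak{c}$.
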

\noindent{Proof.} We are assuming that in the following diagram
\[\begin{tikzcd}
	{(\frak{b}/\frak{c})^\vee} & {J_\frak{c}} & {\Acr(C_\frak{b})[p^{-1}]} \\
	&& {\Acr((A_\frak{b})_\textnormal{perf}/I_S(A_\frak{b})_\textnormal{perf})[p^{-1}]}
	\arrow["{\log^*}", from=1-1, to=1-2]
	\arrow[dashed, from=1-1, to=2-3]
	\arrow["{{\rho_\frak{b}^*}}", from=1-2, to=1-3]
	\arrow[from=2-3, to=1-3]
\end{tikzcd}\]
the dashed arrow exists, and then it is necessarily unique. By virtue of \eqref{mappacoverpdiv} and the commutativity of diagram \eqref{diagrammafinale}, it corresponds uniquely to a dashed arrow in the following diagram
\[\begin{tikzcd}
	{\Aut_\frak{b}^\circ(\mathbb{Y})} & {\widetilde{Z}} \\
	& {\widetilde{\mathbb{H}}_\frak{c}}
	\arrow[from=1-1, to=1-2]
	\arrow[from=1-1, to=2-2]
	\arrow[dashed, from=1-2, to=2-2]
\end{tikzcd}\]
where the diagonal map is the composition $\Aut^\circ_\frak{b}(\mathbb{Y})\xto{}{\log}\widetilde{\mathcal{H}}_{\mathbb{Y},\frak{b}}^\circ\onto{}\widetilde{\mathbb{H}}_\frak{c}$. Since $\widetilde{\mathbb{H}}_\frak{c}$ is perfect, if $A$ is a $k$-algebra any map $\Spec(A)\to\widetilde{\mathbb{H}}_\frak{c}$ factors through the map $\Spec(A)\to\varinjlim_F\Spec(A)$. Here the sheaf $\varinjlim_F\Spec(A)$ is the affine formal scheme $\Spf(A^\flat)$ whenever the Frobenius $F$ is a thickening, which  is the case for both schemes in the top row of the previous diagram. Thus, we find that there is a dotted arrow of topological rings:
\[\begin{tikzcd}
	{(C_\frak{b})^\flat=k[\![x_1^{\pmeninf},\dots,x_n^{\pmeninf}]\!]} & {(k[\![x_{m+1},\dots,x_n]\!])_\textnormal{perf}[\![x_1^{\pmeninf},\dots,x_m^{\pmeninf}]\!]=((A_\frak{b})_\textnormal{perf}/I_S(A_\frak{b})_\textnormal{perf})^\flat} \\
	& {k[\![\widetilde{I}_\frak{c}]\!]}
	\arrow[hook', from=1-2, to=1-1]
	\arrow[from=2-2, to=1-1]
	\arrow[dashed, from=2-2, to=1-2]
\end{tikzcd}\]
where in the top row we have used the formula for the perfection given in Lemma \eqref{calcoloperf}. Thus every element of $I_\frak{c}^{\pmeninf}$ belongs to the topologically nilpotent elements of  $((A_\frak{b})_\textnormal{perf}/I_S(A_\frak{b})_\textnormal{perf})^\flat$. By the explicit description of this ring given in the diagram, the topologically nilpotent elements which also lie in $(A_\frak{b})_\textnormal{perf}$ coincide with $I_S^{\pmeninf}$. Therefore $I_\frak{c}^{\pmeninf}\subseteq I_S^{\pmeninf}$, and by Lemma \ref{tecnicofinale} we have $I_\frak{c}\subseteq I_S$, which is what we wanted to prove.
$\qed$

Now we can reap the fruit of our work: first we take $S=V(\frak{b})$ itself and we determine the monodromy group $G(\mathcal{M}_{V(\frak{b})})$. Then we prove the stronger assertion, Theorem \ref{teomonodromia}.

\begin{prop}\label{propiso}
    The monodromy action induces an isomorphism $G(\mathcal{M}_{V(\frak{b})})\simeq U(\frak{b})$.
\end{prop}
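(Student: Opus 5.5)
We apply the machinery of Subsection \ref{sottomon} with $S=V(\frak{b})$, i.e. with $Z=V(\frak{b})^\alg=\Spec(A_\frak{b})$, $\widetilde{Z}=\Aut^\circ_\frak{b}(\widetilde{\mathbb{Y}})^\alg=\Spec((A_\frak{b})_\textnormal{perf})$, $G=\Aut^\circ_\frak{b}(\mathbb{Y})=\Spec(C_\frak{b})$, and $V=\mathbb{D}(\mathbb{Y})$ with the comodule structure $\rho_\frak{b}$ of Remark \ref{thetarapp}.

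\emph{Step 1: identify the target group.} By the Lemma preceding Lemma \ref{prep1}, $\mathcal{M}_{V(\frak{b})^\alg}\simeq\mathcal{R}_\Q(V)$, so $G(\mathcal{M}_{V(\frak{b})})=G(\langle\mathcal{R}_\Q(V)\rangle^\otimes)$. Next we compute $G(\langle V\rangle^\otimes)$ via Lemma \ref{tannaka}: $A(V)$ is the Hopf subalgebra of $\Acr(C_\frak{b})$ generated by the matrix coefficients of $\rho_\frak{b}$ and their antipodes. Since these coefficients are the images of the coordinate functions of $\GL(\mathbb{D}(\mathbb{Y}))$ under $R^+_\mathbb{Y}\to R^+_{\frak{a}_\mathbb{Y}}\to R^+_\frak{b}\xrightarrow{\rho_\frak{b}^*}\Acr(C_\frak{b})$, we get $A(V)=\rho_\frak{b}^*(R_\frak{b}^+)$. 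By Lemma \ref{lemmafinalehopf}, $\rho_\frak{b}^*$ is injective, so $A(V)\simeq R^+_\frak{b}$ and $G_V=\Spec(R_\frak{b})=U(\frak{b})$. In particular $G(\langle V\rangle^\otimes)\simeq U(\frak{b})$ is unipotent.

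\emph{Step 2: verify the hypothesis of Corollary \ref{fullyfaithful}.} We show $A(V)\cap A(B)=\W(k)$. Here $B=(A_\frak{b})_\textnormal{perf}$ is perfect, so $\Acr(B)=W(B)$. By Lemma \ref{prep1}, $W(B)$ maps into $W(C_\frak{b}^\flat)$ compatibly with the coactions. Since $W(C^\flat_\frak{b})$ is a formal Hopf subalgebra of $\Acr(C_\frak{b})$, the comodule map of $W(B)$ lands in $W(B)\widehat\otimes W(C^\flat_\frak{b})$, and hence $A(B)\subseteq W(C_\frak{b}^\flat)$; this minimality point needs a short argument. It then suffices to show $A(V)\cap W(C^\flat_\frak{b})=\W(k)$. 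The argument is that of Remark \ref{trivialintrem}: $F$ is bijective on $W(C_\frak{b}^\flat)$, whereas on $A(V)$ the Frobenius should be topologically nilpotent on the augmentation ideal. Indeed, through the identification $\Acr(C_{\mathcal{H}^\circ_{\mathbb{Y},\frak{b}}})[p^{-1}]\simeq\Acr(C_\frak{b})[p^{-1}]$ of diagram \eqref{diagrammafinale}, $A(V)[p^{-1}]$ corresponds to $\theta^a(\Sym_K(\frak{b}^\vee))$, which is generated by the image of $\theta$ on a connected $p$-divisible group. The relevant Dieudonné module has slopes in $]0,1]$ (contravariant dual of slopes in $[-1,0[$, by Proposition \ref{dieudonneinternalhom}), so $F$ is topologically nilpotent on the augmentation ideal there.

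To conclude from this, take an element $a$ of the intersection with zero augmentation. On one hand $F^n(a)\to0$; on the other, $F$ is an isometric automorphism of $W(C^\flat_\frak{b})$, so $a$ must vanish. The exponential isomorphism is compatible with Frobenius, so the property transfers from $\Sym_K(\frak{b}^\vee)$ to $A(V)[p^{-1}]$.

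\emph{Step 3: conclude.} Corollary \ref{fullyfaithful}(1) gives full faithfulness, and since $G(\langle V\rangle^\otimes)$ is unipotent, Corollary \ref{fullyfaithful}(2) gives $G(\mathcal{M}_{V(\frak{b})})\simeq U(\frak{b})$ as subgroups of $\GL(\mathbb{D}(\mathbb{Y})[p^{-1}])$, compatibly with the monodromy action.

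The main obstacle is Step 2. It requires making the transport of the topological nilpotence of $F$ through the exponential isomorphism precise; if this transfer proves delicate, one can instead argue on $\Sym_K(\frak{b}^\vee)$ and use Lemma \ref{iniettività} directly. It also requires justifying that $A(B)$ lies in $W(C^\flat_\frak{b})$.
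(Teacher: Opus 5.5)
Your proposal is correct and is essentially the paper's proof. You identify $\mathcal{M}_{V(\mathfrak{b})^\alg}$ with $\mathcal{R}_\Q(\rho_\mathfrak{b})$, get $G(\langle V\rangle^\otimes)=U(\mathfrak{b})$ from the injectivity of $\rho_\mathfrak{b}^*$ (Lemma \ref{lemmafinalehopf}) and Lemma \ref{tannaka}, put $A(B)$ inside $W(C_\mathfrak{b}^\flat)$ via Lemma \ref{prep1}(2), deduce $A(V)\cap W(C_\mathfrak{b}^\flat)=\W(k)$ as in Remark \ref{trivialintrem}, and conclude with Corollary \ref{fullyfaithful}(2).

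Your extra justification of the Frobenius step, which the paper leaves to the remark, is welcome. Run it inside the finite-dimensional $F$-stable span $L$ of the $F^n(a)$: there $L\cap W(C_\mathfrak{b}^\flat)$ is an $F$-stable lattice on which $F$ is bijective, which forces slope $0$ and contradicts the positive slopes coming from $\Sym_K(\mathfrak{b}^\vee)$. Do not argue in the $p$-adic topology of $\Acr(C_\mathfrak{b})$. There even the Teichm\"uller lift $[x_i]\in W(C_\mathfrak{b}^\flat)$ of a coordinate satisfies $F^n([x_i])=(p^n)!\cdot\bigl([x_i]^{p^n}/(p^n)!\bigr)\to 0$, so ``$F^n(a)\to0$'' alone would not contradict $a\in W(C_\mathfrak{b}^\flat)$.
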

\noindent{\it Proof.} By Lemma \ref{lemmafinalehopf}, $\rho_\frak{b}^*(R_\frak{b}^+)$ is the smallest Hopf subalgebra of $\Acr(C_\frak{b})$ through which the representation $\rho_\frak{b}$ factors. Therefore, by Lemma \ref{tannaka} we have $G(\langle{V_\frak{b}}\rangle^\otimes)=U(\frak{b})$. Furthermore,
by point $(2)$ of Lemma \ref{prep1} the smallest formal Hopf subalgebra of $\Acr(C_{\frak{b}})$ which acts on $W((A_{\frak{b}})_\text{perf})$ is contained in $W((C_{\frak{b}})^\flat)$. By Remark \ref{trivialintrem}, the hypothesis of point (2) of Corollary \ref{fullyfaithful} are verified, which implies that $G(\mathcal{M}_{V(\frak{b})})\simeq G(\langle{V_\frak{b}}\rangle^\otimes)$.
$\qed$

\begin{thm}\label{mainthm}
    Let $S\subseteq\defsus(\mathbb{Y})$ be a smooth formal subscheme. Suppose that $G(\mathcal{M}_{S})\subseteq U(\frak{b})$ for some Dieudonné-Lie subalgebra $\frak{b}\subseteq\frak{a}_Y$. Then $S\subseteq V(\frak{b})$.
\end{thm}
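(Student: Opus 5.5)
The idea is to reduce to the case where $S$ lies in $V(\frak{b}')$ for a minimal $\frak{b}'$, and then to use Lemma \ref{inclstretta} and Lemma \ref{prep2} to shrink $\frak{b}'$ until it sits inside $\frak{b}$.

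First we reduce to $S\subseteq V(\frak{a}_\mathbb{Y})=\defsus(\mathbb{Y})$. Let $\frak{b}'\subseteq\frak{a}_\mathbb{Y}$ be a minimal Dieudonné-Lie subalgebra with $S\subseteq V(\frak{b}')$; one exists because $\frak{a}_\mathbb{Y}$ is finite-dimensional. We will show that $\frak{b}'\subseteq\frak{b}$. Since $V(\frak{b}')\subseteq V(\frak{b})$ whenever $\frak{b}'\subseteq\frak{b}$, this gives the theorem. To compare with $\frak{b}$ we replace $\frak{b}$ by $\frak{b}\cap\frak{b}'$. This intersection is again a Dieudonné-Lie subalgebra, and $U(\frak{b}\cap\frak{b}')=U(\frak{b})\cap U(\frak{b}')$ because the groups are unipotent. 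Moreover $G(\mathcal{M}_S)\subseteq U(\frak{b}')$: functoriality of monodromy under $S\subseteq V(\frak{b}')$ together with Proposition \ref{propiso} gives this. So it suffices to prove the following statement.

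\emph{If $S\subseteq V(\frak{b}')$ and $G(\mathcal{M}_S)\subseteq U(\frak{c})$ for a proper Dieudonné-Lie subalgebra $\frak{c}\subsetneq\frak{b}'$, then $S\subseteq V(\frak{c}')$ for some proper Dieudonné-Lie subalgebra $\frak{c}'\subsetneq\frak{b}'$.}

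This contradicts minimality, so $\frak{b}'\subseteq\frak{b}$.

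To prove the statement, rename $\frak{b}'$ as $\frak{b}$ and apply the setup of Subsection \ref{sottomon} to the torsor $\widetilde{Z}\to Z=S^\alg$, with $V=\rho_\frak{b}$. By Lemma \ref{lemmafinalehopf} and Lemma \ref{tannaka}, $G(\langle V\rangle^\otimes)=U(\frak{b})$, which is unipotent. The isocrystal $\mathcal{R}_\Q(V)$ on $Z$ is $\mathcal{M}_S$: restrict the isomorphism $\mathcal{N}\simeq\mathcal{M}_{V(\frak{b})^\alg}$ along $S\subseteq V(\frak{b})$. Hence $G(\mathcal{M}_S)\subseteq U(\frak{c})\subsetneq U(\frak{b})$.

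We do not use Lemma \ref{inclstretta} as a black box. Instead we look at all the comodule maps it produces. The Tannakian category $\langle\mathcal{M}_S\rangle^\otimes$ is equivalent to $\Rep(G(\mathcal{M}_S))$. Consider the representation $R_\frak{b}$ of $U(\frak{b})$, viewed through $\rho_\frak{b}^*$ inside $\Acr(C_\frak{b})$. Elements of $J_\frak{c}$ generated by degree-one pieces of $(\frak{b}/\frak{c})^\vee$, suitably filtered by the unipotent (lower central series) filtration, become invariant or extendable after restriction to $G(\mathcal{M}_S)\subseteq U(\frak{c})$. Lemma \ref{homsets} then turns them into comodule maps to $\Acr(B)$, where $B=(A_\frak{b})_\text{perf}/I_S(A_\frak{b})_\text{perf}$. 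Concretely, we want to show that the image of $(\frak{b}/\frak{c})^\vee$ under $\rho_\frak{b}^*\circ\log^*$ restricts to zero on $\Aut^\circ_\frak{b}(\mathbb{Y})^\Delta$ in the appropriate sense, that is, that it lies in $\Acr(B)[p^{-1}]$. Lemma \ref{prep2} would then give $S\subseteq V(\frak{c})$ outright, and the statement follows with $\frak{c}'=\frak{c}$.

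The main obstacle is this last step: showing that all of $(\frak{b}/\frak{c})^\vee$, and not just one primitive element $g$, comes from $\Acr(B)$. Lemma \ref{inclstretta} only gives one element $x\in\Acr(B)$ with $\rho_B(x)=x\otimes1+1\otimes g$ for a nonzero primitive $g\in\rho_\frak{b}^*(R_\frak{b})$. By Proposition \ref{isomsm} and Lemma \ref{lemmafinalehopf}, the primitives there are the images of $\frak{b}^\vee$ under $\log^*$, so $g$ corresponds to a nonzero functional on $\frak{b}$. The fallback is therefore to take $\frak{c}'$ to be the largest Dieudonné-Lie subalgebra of $\frak{b}$ on which every such realizable primitive vanishes. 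The set of realizable primitives is a $K[F]$-submodule, since $F$ and $V$ act on $\Acr(B)$ compatibly. It is also annihilated by $[\frak{b},\frak{b}]$: primitives of the coordinate ring of the unipotent group vanish on commutators. Hence $\frak{c}'$ is a proper Dieudonné-Lie ideal. Lemma \ref{prep2}, applied to $\frak{c}'$, then gives $S\subseteq V(\frak{c}')$, which contradicts minimality. The delicate points are checking that the $F$-stability is compatible with the twisted Frobenius on $\Acr$, and that $x$ can be corrected by constants so that it lies in the image of the dashed arrow of Lemma \ref{prep2}.
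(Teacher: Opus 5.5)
Your outer reduction (choose a minimal $\mathfrak{b}'$ with $S\subseteq V(\mathfrak{b}')$, replace $\mathfrak{b}$ by $\mathfrak{b}\cap\mathfrak{b}'$, contradict minimality) is fine and equivalent to the paper's reduction. Your fallback of cutting $\mathfrak{b}$ down by an $F$-stable space of primitives is the same mechanism as the paper's subspace $L$. The gap is exactly the step you call delicate and leave unproved.

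Lemma \ref{prep2} needs the primitive $g\in\rho_{\mathfrak{b}}^*(R_{\mathfrak{b}})\subseteq\Acr(C_{\mathfrak{b}})[p^{-1}]$ \emph{itself} to lie in the subring $\Acr(B)[p^{-1}]$. Lemma \ref{inclstretta} only gives you an element $x$ of a different ring, $\Acr(B)$, with $\rho_B(x)=x\otimes 1+1\otimes g$. Until you relate $x$ and $g$, your ``realizable primitives'' cannot be fed into Lemma \ref{prep2}, so the argument stops exactly where it matters. Your abandoned first attempt (getting all of $(\mathfrak{b}/\mathfrak{c})^\vee$ at once via a lower-central-series filtration) is neither carried out nor needed.

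The missing idea is Lemma \ref{prep1}(1). The inclusion of the fibre $G=\Aut^\circ_{\mathfrak{b}}(\mathbb{Y})\hookrightarrow\widetilde{Z}$ over the base point induces an \emph{injective}, comodule-compatible map $\Acr(B)\hookrightarrow\Acr(C_{\mathfrak{b}})$. Under this map, $\rho_B$ is the restriction of the comultiplication $\Delta$. Computing inside $\Acr(C_{\mathfrak{b}})$ gives $\Delta(x-g)=\rho_B(x)-\Delta(g)=(x-g)\otimes 1$. Applying the counit then shows $x-g\in\W(k)$, hence $g\in\Acr(B)$.

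Because $\rho_B$ and $\Delta$ commute with $F$ (both are induced by ring maps via functoriality of $\Acr$), the $F$-stable $K$-span $L$ of $g$ also lies in $\Acr(B)[p^{-1}]$. Since $L$ consists of primitives, $L=\log^*((\mathfrak{b}/\mathfrak{c})^\vee)$ for a proper $F$-stable ideal $\mathfrak{c}\supseteq[\mathfrak{b},\mathfrak{b}]$. Lemma \ref{prep2} then gives $S\subseteq V(\mathfrak{c})$. Once this step is supplied, your variant using all realizable primitives works just as well.

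A small correction: the primitives in $\rho_{\mathfrak{b}}^*(R_{\mathfrak{b}})$ come from $(\mathfrak{b}/[\mathfrak{b},\mathfrak{b}])^\vee$, not from all of $\mathfrak{b}^\vee$. Also, Proposition \ref{isomsm}, which concerns Tate modules of $p$-divisible groups, is not what identifies them.
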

\noindent{\it Proof.} It is enough to show that, if $S\subseteq V(\frak{b})$ is a smooth formal subscheme and $G(\mathcal{M}_S)$ is strictly smaller than $U(\frak{b})$, then $S\subseteq V(\frak{c})$ for some strictly smaller Dieudonné-Lie algebra $\frak{c}\subsetneq\frak{b}$.

By Lemma \ref{inclstretta}, there exists a non-trivial primitive element $g\in\rho_\frak{b}^*(R_\frak{b}^+)\subseteq\Acr(C_{\frak{b}})$ and an element $x\in\Acr((A_{\frak{b}})_\text{perf}/I_S(A_{\frak{b}})_\text{perf})$ such that $\rho(x)=x\otimes 1+1\otimes g$. But then
\begin{equation*}
    \Delta(x-g)=\rho(x)-\Delta(g)=(x-g)\otimes1
\end{equation*}
where $\Delta$ is the comultiplication of $\Acr(C_\frak{b})$. In other words $x-g$ is an invariant of the regular representation of $\Acr(C_\frak{b})$, so in fact $x-g$ belongs to $\W(k)$. This implies
\begin{equation*}
    g\in \Acr((A_{\frak{b}})_\text{perf}/I_S(A_{\frak{b}})_\text{perf}).
\end{equation*}
Denote by $L$ the smallest $F$-stable $K$-submodule of $R_\frak{b}$ containing $g$. We have an inclusion $L\subseteq \Acr((A_{\frak{b}})_\text{perf}/I_S(A_{\frak{b}})_\text{perf})[p^{-1}]$, and $L$ consists primitive elements of $\rho_\frak{b}^*(R_\frak{b})$ and is $F$-stable. 

By injectivity of $\rho_\frak{b}^*$, we may consider $L$ as a subset of $R_\frak{b}$. The inclusion $\Sym_K(L)\subseteq R_\frak{b}$ then defines an epimorphism $U(\frak{b})\onto{}\mathbb{G}_a^s$. On the level of Lie algebras, it gives a surjection of Lie algebras $\frak{b}\onto{} \exp^*(L)^\vee$, which by construction is compatible with the Frobenius on both sides. Therefore, the kernel $\frak{c}$ of this surjection is a Dieudonné-Lie subalgebra of $\frak{b}$ with $\exp^*(L)=(\frak{b}/\frak{c})^\vee$.

This means that $L=\log^*((\frak{b}/\frak{c})^\vee)$ is contained in $\Acr((A_{\frak{b}})_\text{perf}/I_S(A_{\frak{b}})_\text{perf})[p^{-1}]$, and by Lemma \ref{prep2} we have $S\subseteq V(\frak{c})$, which is what we wanted to prove.
$\qed$

Finally, combining Theorem \ref{mainthm} with general results on the monodromy of isocrystals from \cite{marcopol}, we are able to prove Theorem \ref{rigiditaTL}.

\begin{thm}\label{domandaC}
    Let $C$ be a central leaf of $\calA_g$ and let $Z\subseteq C$ be a smooth, connected, locally closed subvariety. If there exists $x\in Z(k)$ such that $Z^{/x}$ is Tate-linear, then $Z^{/y}$ is Tate-linear for all $y\in Z(k)$.
\end{thm}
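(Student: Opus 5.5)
The strategy is to move the Tate-linearity from $x$ to an arbitrary $y$ by comparing monodromy groups. Theorem \ref{mainthm} converts an upper bound on local monodromy into containment in a Tate-linear variety. Proposition \ref{propiso} computes the monodromy of a Tate-linear variety exactly. The input needed from \cite{marcopol} is a local-to-global statement: for a smooth connected $Z$ and an $F$-isocrystal $\mathcal{M}$ on $Z$ which is isoclinic along a central leaf (after twisting, e.g. on $\End(\mathcal{M})$ or via the slope filtration), the monodromy group $G(\mathcal{M}_{Z^{/y}})$ of the completion at a closed point $y$ is identified with the unipotent radical of the global monodromy of $\mathcal{M}_Z$. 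More precisely, the groups $G(\mathcal{M}_{Z^{/y}})$, for varying $y$, all arise from one constant subgroup of the global monodromy. This is the parallel transport of the local monodromy in \cite{marcopol}, in the style of their ``local monodromy is the unipotent part'' theorem.

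The argument will run as follows. Write $\mathbb{Y}_y=A_y[p^\infty]$. Since $Z\subseteq C$ lies in a single central leaf, for every $y$ the completion $Z^{/y}\subseteq C^{/y}\subseteq\defsus(\mathbb{Y}_y)$ is smooth.

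\textbf{Step 1.} At $x$ we have $Z^{/x}=V(\frak{b}_x)$ for some Dieudonné-Lie algebra $\frak{b}_x$. Proposition \ref{propiso} gives $G(\mathcal{M}_{Z^{/x}})=U(\frak{b}_x)$.

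\textbf{Step 2.} Transport this to $y$. Choose a path, or a common fibre functor after passing to the generic point, via Drinfeld's Tannakian formalism on $\Isoc(Z)$. By the cited result of \cite{marcopol}, under a (noncanonical) isomorphism $\mathbb{D}(\mathbb{Y}_x)[p^{-1}]\simeq\mathbb{D}(\mathbb{Y}_y)[p^{-1}]$ of isocrystals over $k$, the group $G(\mathcal{M}_{Z^{/y}})$ corresponds to $G(\mathcal{M}_{Z^{/x}})$. Such an isomorphism exists since $x,y$ lie in the same central leaf, so $\mathbb{Y}_x\simeq\mathbb{Y}_y$. In particular, $G(\mathcal{M}_{Z^{/y}})=U(\frak{b}_y)$ for the Dieudonné-Lie subalgebra $\frak{b}_y\subseteq\frak{a}_{\mathbb{Y}_y}$ obtained by transporting $\frak{b}_x$. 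It is Frobenius stable because the identification respects $F$, and it lies in $\frak{a}_{\mathbb{Y}_y}$ by Proposition \ref{dieudonneinternalhom}.

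\textbf{Step 3.} Apply Theorem \ref{mainthm} at $y$ with $S=Z^{/y}$ and $\frak{b}=\frak{b}_y$. This gives $Z^{/y}\subseteq V(\frak{b}_y)$. Both are smooth formal subvarieties of $C^{/y}$. Their dimensions agree, since $\dim Z^{/y}=\dim Z=\dim Z^{/x}=\dim V(\frak{b}_x)$ by connectedness and smoothness, and $\dim V(\frak{b}_x)$ depends only on the slopes and ranks of $\frak{b}_x$, which match those of $\frak{b}_y$ (e.g. $\dim V(\frak{b})=\dim$ of the $p$-divisible group of $\frak{b}^+$). Hence $Z^{/y}=V(\frak{b}_y)$. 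The polarization needs no separate treatment, because $V(\frak{b}_y)$ is automatically inside $C^{/y}$ once $\frak{b}_y$ lies in the symplectic part, which is preserved by the transport.

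The main obstacle is Step 2: getting the local monodromy at $y$ to equal exactly $U(\frak{b}_y)$, not merely to be contained in it, from the result in \cite{marcopol}. For Step 3 only the containment $G(\mathcal{M}_{Z^{/y}})\subseteq U(\frak{b}_y)$ is needed. I would first obtain it by showing that the global monodromy of $\mathcal{M}_Z$ on the open locus has unipotent radical contained in the transported local group. Then I would apply semicontinuity in the form ``local monodromy at any point is contained in the unipotent radical of global monodromy,'' and finally the reverse inclusion at $x$, which gives equality of the radical with $U(\frak{b}_x)$.
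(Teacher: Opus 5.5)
Your proposal is correct and is essentially the paper's argument. Proposition~\ref{propiso} and Theorem~\ref{mainthm} reduce Tate-linearity of $Z^{/y}$ to the equality $\dim Z^{/y}=\dim V(\mathfrak{b}_y)$, where $\mathfrak{b}_y$ is the Lie algebra of the local monodromy, and the transport from $x$ to $y$ is exactly the point-independence of local monodromy groups, which the paper simply cites as \cite[3.4.4]{marcopol}. The fallback in your last paragraph (re-deriving Step 2 via unipotent radicals and semicontinuity) is therefore unnecessary, and you are right that only the containment $G(\mathcal{M}_{Z^{/y}})\subseteq U(\mathfrak{b}_y)$ together with $\dim V(\mathfrak{b}_y)=\dim V(\mathfrak{b}_x)$ is actually used.
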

\noindent{\it Proof of.} By Theorem \ref{mainthm}, a smooth formal subvariety $S\subseteq C(x)^{/x}$ is Tate-linear if and only if $\dim(S)=\dim(V(\frak{b}))$, where $\frak{b}=\textnormal{Lie}(G(\mathcal{M}_Z))$. 

By \cite[3.4.4]{marcopol}, the local monodromy groups $G(\mathcal{M}_{Z^{/y}})$ are independent of the chosen point $y\in Z(k)$. Thus if $Z$ is Tate-linear at a closed point, it is Tate-linear at every closed point.
$\qed$

\vspace{.5em}
\hrule
\vspace{.5em}
\begin{center}
\begin{minipage}{5cm}
\includegraphics[width=4.8cm]{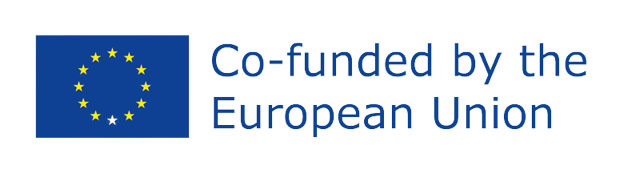}
\end{minipage}
\begin{minipage}{9.5cm}
\footnotesize Co-Funded by the European Union. Views and opinions expressed are however those of the author(s) only and do not necessarily reflect those of the European Union. Neither the European Union nor the granting authority can be held responsible for them.
\end{minipage}
\end{center}

\end{document}